\documentclass[12pt]{article}
\usepackage{url}
\usepackage{mathrsfs}
\usepackage[a4 paper, margin=2cm]{geometry}

\usepackage{enumerate}
\usepackage{graphics}
\usepackage{tikz}

\usepackage{verbatim}
\usepackage{slashed}
\usepackage[hidelinks]{hyperref}

\usepackage{graphicx}
\usepackage{datetime}
\usepackage{color}
\usepackage{amsmath,amsfonts,amsthm,amssymb}
\usepackage{epstopdf}

\newtheorem{theorem}{Theorem}[section]
\newtheorem{proposition}[theorem]{Proposition}

\newtheorem{corollary}[theorem]{Corollary}
\newtheorem{lemma}[theorem]{Lemma}
\newtheorem{remark}[theorem]{Remark}
\numberwithin{equation}{section}

\newcommand{\R}{\mathbb{R}}
\newcommand{\N}{\mathbb{N}}
\newcommand{\cN}{\mathcal{N}}
\newcommand{\cQ}{\mathcal{Q}}
\newcommand{\cK}{\mathcal{K}}

\newcommand{\HOJ}{\mathbb{H}}
\newcommand{\HO}{\mathbb{H} }
\newcommand{\cE}{\mathcal{E}}
\newcommand{\HOOJ}{\mathbb{H}_{0}}
\newcommand{\HOO}{\mathbb{H}_{0}}
\newcommand{\D}{\mathfrak{D} }
\newcommand{\nN}{\mathfrak{N} }
\DeclareMathOperator{\dist}{dist}
\newcommand{\e}{\varepsilon}

\begin{document}

\title{A logistic equation with non-local operators \\
of order near zero}
\author{Luigi Appolloni, Serena Dipierro and Enrico Valdinoci\thanks{Luigi Appolloni: University of Leeds, School of Mathematics, Woodhouse, Leeds LS2 9JT, United Kingdom\\
{\tt L.Appolloni@leeds.ac.uk}
\\
Serena Dipierro: Department of Mathematics and Statistics, The University of Western Australia, 35 Stirling Highway, Crawley, Perth, WA 6009, Australia\\
{\tt serena.dipierro@uwa.edu.au}
\\
Enrico Valdinoci: Department of Mathematics and Statistics, The University of Western Australia, 35 Stirling Highway, Crawley, Perth, WA 6009, Australia\\
{\tt enrico.valdinoci@uwa.edu.au}
}}
\maketitle

\begin{abstract}
In this paper we study an equation driven by a non-local operator of order ``near~zero'' with mixed Dirichlet--Neumann boundary conditions, modelling a biological system consisting of a population self-competing for resources in a closed environment and subject to an extremely heavy-tailed dispersal process. We establish several results concerning the existence of states in which the population can survive requiring different assumptions on the non-linear reaction term and on the various configurations of the sets where the Dirichlet or Neumann conditions hold.

\end{abstract}
\section{Introduction}
In this paper we investigate the existence of non-negative and non-trivial solutions to the following problem
\begin{equation}
\label{mainproblem}
\tag{$P$}
\begin{cases}
\mathfrak{L} u = f(x,u) & \text{in } \Omega, \\
\mathcal{N} u = 0 & \text{in } \nN, \\
u = 0 & \text{in } \mathfrak{D}.
\end{cases}
\end{equation}
Here, $\Omega \subset \mathbb{R}^N$ is a bounded domain with $N \geq 1$, while $\mathfrak{N}$ and $\mathfrak{D}$ are open subsets of $\mathbb{R}^N \setminus \Omega$ such that~$\mathfrak{N}\cap \mathfrak{D}=\varnothing$ and~$ \mathbb{R}^N \setminus \Omega =\overline{\mathfrak{N}\cup \mathfrak{D}}$. We assume that the boundaries of~$\Omega$,
$\mathfrak{N}$ and~$\mathfrak{D}$
are smooth. The symbol $\mathfrak{L}$ denotes a non-local operator, $\mathcal{N}$ is the corresponding Neumann-type operator, and $f$ represents a non-linear reaction term.

The prototype of nonlinearity that we have in mind is the logistic one, namely
\begin{equation} \label{eq34}
    f(x,t) := \left(m(x) - \mu(x)\, t\right) t,
\end{equation}
where $m$, $ \mu \colon \Omega \to \R$ are two measurable functions representing, respectively, the available resources and the competition for those resources in the region $\Omega$.

In this setting, problem \eqref{mainproblem} can be interpreted as a model for a biological system in which~$u$ represents the density of a population self-competing for resources within the region $\Omega$. In this setting, a non-negative and non-trivial solution corresponds to a configuration in which the species can survive. The population is allowed to spread and self-compete for the resources in the environment according to a non-local diffusion process driven by the operator $\mathfrak{L}$.
The request $u = 0$ in $\D$ is a non-local Dirichlet-type condition and therefore $\D$ represents a region where the population cannot survive. In contrast, individuals are allowed to interact between the regions $\Omega$ and $\nN$, and the condition $\mathcal{N} u = 0$ in $\nN$ imposes a balance of the exchanges between the two sets. 

Using variational methods and spectral analysis techniques, in this paper we show that problem \eqref{mainproblem} admits non-negative and non-trivial solutions under different configurations of the sets $\Omega, \  \nN, \ \D$ and for a general class of operators that we will call of order ``near zero". More precisely, the operator $\mathfrak{L}$  is defined as 
\begin{equation} \label{eq33}
\mathfrak{L}u(x) := \mathrm{P.V.} \int_{\mathbb{R}^N} \bigl(u(x) - u(y)\bigr)\mathcal{K}(x-y)\, dy,
\end{equation} where~$P.V.$ stands for the Cauchy principal value notation.

The operator $\mathcal{N}$ defines the corresponding non-local Neumann condition which, for $x \in \mathbb{R}^N \setminus \overline{\Omega}$, is given by
\begin{equation*}
\mathcal{N}u(x):
=
\int_{\Omega} (u(x)-u(y))\mathcal{K}(x-y)\,dy.
\end{equation*}

Regarding the kernel~$\mathcal{K}$, we assume that
$\mathcal{K} \colon \mathbb{R}^{N} \setminus\{0\}\to [0,+\infty)$ is a measurable function satisfying the following conditions:
\begin{itemize}
\item[\textbf{\((K_1)\)}]  $\mathcal{K}(z) = \mathcal{K}(-z)$ and, for all~$r>0$,
\begin{equation*}
\int_{\R^N\setminus B_r} \mathcal{K}(z) \, dz < +\infty.
\end{equation*}

\item[\textbf{\((K_2)\)}] There exist $\rho >0$ and a measurable function~$\ell \colon (0, \rho) \to (0, + \infty)$ such that
\begin{equation*}
\mathcal{K}(z) =\frac{ \ell(|z|)}{ |z|^{N}} \quad \text{for } 0 < |z| < \rho.
\end{equation*}
    
\item[\textbf{\((K_3)\)}] The kernel $\mathcal{K}$ belongs to the non-integrable regime, i.e.
\begin{equation*}
\lim_{t\to0^+} \int_t^\rho \frac{\ell(\vartheta)}{\vartheta} \, d \vartheta =+ \infty.  
\end{equation*}

\item[\textbf{\((K_4)\)}] The function $\ell$ varies slowly at the origin, i.e.
\begin{equation*}
\lim_{t \to 0} \frac{\ell(\alpha t)}{\ell(t)} = 1 \quad \text{for every } \alpha > 0.
\end{equation*}
\end{itemize}
    
We point out that the case of the fractional Laplacian operator,
which corresponds to the choice
\begin{equation*}
\mathcal{K}(z) := |z|^{-N - 2s}, \quad {\mbox{with }} 0 < s < 1,
\end{equation*}
is not comprised in our setting, as this kernel would not satisfy~$(K_4)$.

Instead, in the present paper, we focus on weakly singular kernels, representing a {\em borderline regime between the fractional Laplacian and the integrable case}, with the aim of modeling
diffusive processes with {\em
extremely heavy-tailed dispersal}. 

Functions $\ell$ satisfying assumption $(K_4)$ are commonly referred to in the literature as ``slowly varying''. Typical examples include $\ell(t) = 1$, $\ell(t) = \log^\beta(2\rho/t)$ for $\beta \ge -1$, and $\ell(t) = \left( \log(2\rho/t)\log\log(2\rho/t) \right)^{-1}$. On the other hand, rapidly oscillating functions such as $\ell(t) = 1 - \sin(1/t)$ are excluded from our analysis.

We emphasize that similar settings have already been considered in other papers (see \cite{MR3759570} and the references therein). Moreover, we also mention \cite{MR4745743, MR4609798, MR4741033, MR3995092, MR4795535, MR4400612, MR4141492, MR4279386, MR4932276},  where other problems regarding operators of order ``near zero" are investigated. 

Problems of logistic type driven by non-local operators have been extensively studied in recent years. For instance, we mention \cite{MR3579567}, where the authors studied the Dirichlet case for a logistc equation driven by the fractional Laplacian, and \cite{MR4651677}, where the Neumann case was considered for a mixed-type local/non-local operator. See also~\cite{MR5010423}
and the references therein for more comments on logistic-type
equations driven by non-local operators.

Of particular relevance to the present work is \cite{MR3771424}, where the authors investigated a related problem involving the classical fractional Laplacian, with particular emphasis on the role of the fractional exponent $s$. In their framework, the parameter $s$ describes how ``static'' the population is. They showed that survival occurs when the fractional exponent is either $s = 1$ or $s \to 0^+$, corresponding to regimes where the diffusion is stronger or long-range interactions are less penalized, respectively. 
Additionally, in the analysis of optimal animal foraging,
it has been shown in~\cite{MR4645663, MR4711702}
that if the prey is close enough to the forager, then the most rewarding hunting strategy lies in a small neighborhood of~$s=0$,
notwithstanding the fact that~$s=0$ is a global pessimizer for some of the efficiency functionals. 

In terms of biological data, extreme heavy-tailed kernels
have been detected in several experiments, though 
empirical tracking data cannot precisely capture
the exact value of an interaction kernel in a critical regime, due to
inherent spatial error margins of the device used (such as
GPS telemetry, radar, camera traps). However,
it has been suggested in~\cite{10.1098rspb.2013.2997} that these kinds of patterns
are related to ``ambush'' strategies by predators in the presence
of high-energy content preys.

These results motivate the analysis of the limiting regimes for operators with weakly singular kernels, which we address here in the case of mixed boundary conditions.

Concerning the non-linear term in \eqref{mainproblem}, we assume that $f: \Omega \times \R \to \R$ is a Carathéodory function such that $f(x,0)=0$, and we define 
\begin{equation*}
    F(x,t):=\int_0^t f(x,\theta)\, d\theta.
\end{equation*}
We require the following assumptions:
\begin{itemize}
     \item[$(f_1)$] There exist $L>0$ and $A \in L^1(\Omega)$, such that
     \begin{equation*}
\begin{cases}
|F(x,t)| \le A(x) & \text{for a.e. } x \in \Omega \text{ and all } t \in [0,L],\\
F(x,t) \le F(x,L) & \text{for a.e. } x \in \Omega \text{ and all } t > L.
\end{cases}
\end{equation*}
    \item[$(f_2)$] $F(x,|t|) \geq F(x,t)$ for a.e. $x\in\Omega$ and all $t \in \R$.
\end{itemize}

The strategy for finding solutions to \eqref{mainproblem} is to look for critical points of the functional 
\begin{equation*}
    \cE(u):= \frac{1}{4}\iint_{\cQ} \left(u(x)-u(y)\right)^2 \mathcal{K} (x-y) \,dx dy - \int_\Omega F(x,u) \, dx,
\end{equation*}
where $\cQ := \R^{2N} \setminus (\Omega^c \times \Omega^c)$. Assumptions $(f_1)$--$(f_2)$ ensure that the second term in the energy functional is well defined after performing a truncation. Moreover, from a variational point of view, these assumptions imply that truncations and taking absolute values do not increase the energy.

We are now ready to state our first result, which shows that, under the assumptions on the kernel and the nonlinearity introduced above, the functional $\cE$ attains a minimum.

\begin{theorem} \label{th1} 
Suppose that $\mathcal{K} \colon \mathbb{R}^{N} \setminus\{0\}\to [0,+\infty)$ is a measurable function satisfying $(K_1)$--$(K_4)$ and $f \colon \Omega \times \R \to \R$ satisfies $(f_1)$--$(f_2)$. Then problem \eqref{mainproblem} admits a bounded, non-negative solution, which is obtained as a minimizer of $\cE$.
\end{theorem}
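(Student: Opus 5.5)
We argue by the direct method of the calculus of variations on the natural energy space
\[
\HO:=\Bigl\{u\colon\R^N\to\R \text{ measurable}: u=0 \text{ a.e. in } \D,\ \iint_{\cQ}(u(x)-u(y))^2\cK(x-y)\,dx\,dy<+\infty\Bigr\},
\]
which is a Hilbert space for the norm $\|u\|^2:=\|u\|_{L^2(\Omega)}^2+\iint_{\cQ}(u(x)-u(y))^2\cK(x-y)\,dx\,dy$. Since $F$ carries no growth condition, we do not minimize $\cE$ directly. Instead we introduce the truncated nonlinearity $\tilde f(x,t):=f(x,\max\{0,\min\{t,L\}\})$ and its primitive $\tilde F$. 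By $(f_1)$, $|\tilde F(x,t)|\le A(x)$ for every $t$, so the corresponding functional $\tilde\cE$ is well defined on $\HO$ and bounded below by $-\|A\|_{L^1(\Omega)}$.

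First we show that a minimizer of $\tilde\cE$ exists. Let $(u_k)$ be a minimizing sequence. Using $(f_2)$ together with the inequality $||u(x)|-|u(y)||\le|u(x)-u(y)|$, we may replace $u_k$ by $|u_k|$. Using $(f_1)$ (so $F(x,t)\le F(x,L)$ for $t>L$) and the fact that truncation at level $L$ is $1$-Lipschitz, we may further replace $u_k$ by $\min\{|u_k|,L\}$. Neither operation increases the energy. Hence we may assume $0\le u_k\le L$, and in particular $\|u_k\|_{L^2(\Omega)}^2\le L^2|\Omega|$. Together with the boundedness of $\tilde\cE(u_k)$, this gives a uniform bound on the Gagliardo-type seminorm, so $(u_k)$ is bounded in $\HO$.

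The main obstacle is compactness. Since $\cK$ is only logarithmically singular, the embedding of $\HO$ into $L^2(\Omega)$ is not the standard fractional Rellich theorem. We will use the non-integrability $(K_3)$, together with the radial structure $(K_2)$ and slow variation $(K_4)$, to prove that bounded sets in $\HO$ are precompact in $L^2(\Omega)$. The plan is to verify the Riesz--Kolmogorov criterion, i.e.\ to estimate $\|u(\cdot+h)-u\|_{L^2}$ uniformly by a quantity tending to $0$ as $h\to0$. Concretely, we control the Fourier multiplier $\int(1-\cos(\xi\cdot z))\cK(z)\,dz$ from below by roughly $\int_{1/|\xi|}^{\rho}\ell(\vartheta)\vartheta^{-1}\,d\vartheta\to+\infty$, using $(K_4)$ to compare $\ell$ at comparable scales. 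This forces uniform decay of the high frequencies, applied to an extension of $u_k$ from a neighbourhood of $\Omega$ controlled by the seminorm on $\cQ$. This step is where we expect most of the work to lie. Alternatively, if the paper has already recorded such a compact embedding for kernels satisfying $(K_1)$--$(K_4)$, as in the references on operators of order near zero, we invoke it directly.

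Compactness then gives a subsequence with $u_k\to u$ in $L^2(\Omega)$, $u_k\to u$ a.e., and weak convergence in $\HO$. The quadratic part of $\tilde\cE$ is weakly lower semicontinuous by Fatou's lemma. The potential part converges by dominated convergence, since $|\tilde F(x,u_k)|\le A\in L^1(\Omega)$. Therefore $u$ is a minimizer of $\tilde\cE$ with $0\le u\le L$. Because $u$ takes values in $[0,L]$, we have $\tilde F(x,u)=F(x,u)$, so $\cE(u)=\tilde\cE(u)$. Moreover, for every $v\in\HO$ the truncation argument above shows $\cE(v)\ge\cE(\min\{|v|,L\})=\tilde\cE(\min\{|v|,L\})\ge\tilde\cE(u)$. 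Hence $u$ minimizes $\cE$ as well.

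Finally we derive the equation. Taking variations $u+\varepsilon\varphi$ with $\varphi\in\HO$ bounded, the derivative of the potential term $\int_\Omega\tilde F(x,u+\varepsilon\varphi)\,dx$ is justified by the mean value theorem and dominated convergence, using that $\tilde f$ is bounded on bounded sets. This yields the weak formulation
\[
\frac12\iint_{\cQ}(u(x)-u(y))(\varphi(x)-\varphi(y))\cK(x-y)\,dx\,dy=\int_\Omega \tilde f(x,u)\varphi\,dx=\int_\Omega f(x,u)\varphi\,dx .
\]
The usual nonlocal integration by parts for mixed problems identifies this with $\mathfrak L u=f(x,u)$ in $\Omega$ and $\cN u=0$ in $\nN$, while $u=0$ in $\D$ holds by the definition of $\HO$. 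This gives a bounded, non-negative solution of \eqref{mainproblem}. If only integrable $f$ is available rather than a locally bounded one, we would instead state the weak formulation with test functions in $\HO\cap L^\infty$.
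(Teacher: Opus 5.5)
Your reductions (replacing $u_k$ by $\min\{|u_k|,L\}$, bounding the seminorm through $\|A\|_{L^1(\Omega)}$, dominated convergence for the potential term) are the paper's. However, the step you yourself identify as the crux is not carried out, and the route you sketch does not work as stated. The Fourier/Riesz--Kolmogorov argument needs a function on all of $\R^N$ whose full Gagliardo seminorm is controlled. The seminorm over $\cQ$ contains no $\nN\times\nN$ interactions, so the values of $u_k$ on a neighbourhood of $\Omega$ are not controlled and cannot provide such an extension. Extending $u_k|_\Omega$ instead would require an extension theorem for the regional seminorm over $\Omega\times\Omega$ with a kernel of order zero, which you do not supply.

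In fact, your $\cQ$-seminorm is exactly the one obtained from the definition of $\HOJ(\Omega\cup\nN)$ with $\Omega\cup\nN$ replaced by $\Omega$. When $\overline{\Omega}\cap\overline{\D}=\varnothing$, the constraint $u=0$ in $\D$ only costs $C\|u\|_{L^2(\Omega)}^2$. So your general claim that bounded sets of $\HO$ are precompact in $L^2(\Omega)$ is the same kind of compact embedding that Remark~\ref{compactness} describes as not known for slowly varying $\ell$. Nor can it simply be quoted: the paper only uses compactness for spaces of functions vanishing outside a bounded set.

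The missing idea is that no such embedding is needed, since you already have $0\le u_k\le L$. Pass to the zero extension $v_k:=u_k\chi_\Omega$, for which
\[
\begin{split}
\iint_{\cQ}(v_k(x)-v_k(y))^2\cK(x-y)\,dx\,dy&\le\iint_{\Omega\times\Omega}(u_k(x)-u_k(y))^2\cK(x-y)\,dx\,dy\\
&\qquad+2L^2\iint_{\Omega\times\Omega^c}\cK(x-y)\,dx\,dy,
\end{split}
\]
and the last integral is finite by Corollary~\ref{OJSL0oLK}. This is where $(K_2)$, $(K_4)$ (through the bound $\ell(t)\le Ct^{-1/2}$ of Corollary~\ref{ORDER:Z}) and the smoothness of $\partial\Omega$ are used, while $(K_3)$ enters through the compactness of $\HOOJ(\Omega)\hookrightarrow L^2(\Omega)$. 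Hence $(v_k)_k$ is bounded in $\HOOJ(\Omega)$, and since $v_k=u_k$ in $\Omega$ you obtain $u_k\to u$ in $L^2(\Omega)$ along a subsequence (Lemma~\ref{lemma3} and Proposition~\ref{prop1}). The $L^\infty$ bound is essential: under $(K_3)$ the weight $\int_{\Omega^c}\cK(x-y)\,dy$ blows up as $x\to\partial\Omega$, so without it the extra term created by the zero extension is not controlled by $\|u_k\|_{L^2(\Omega)}$.

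Further points need repair.

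\emph{Lower semicontinuity.} Fatou's lemma on $\cQ$ requires a.e.\ convergence of $u_k$ in $\nN$ as well, because the $\Omega\times\nN$ interactions belong to the energy, and convergence in $L^2(\Omega)$ gives no information there. The paper redefines $u_k$ in $\nN$ as $E_{u_k}/E_1$. This does not increase the energy by Lemma~\ref{lemma1}, and it converges pointwise in $\nN$ by dominated convergence (using $0\le u_k\le L$). Fatou then applies on all of $\cQ$, with the limit equal to $E_u/E_1$ in $\nN$. Arguing via weak convergence in $\HO$ would instead require a genuine Hilbert structure. Your $\|\cdot\|$ is only a seminorm when $\cK$ vanishes outside $B_\rho$ and $\nN$ contains points farther than $\rho$ from $\Omega$, as may happen for the kernel in~\eqref{cbnweiyft3y}.

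\emph{The truncated primitive.} The primitive of $f(x,\max\{0,\min\{t,L\}\})$ equals $F(x,L)+f(x,L)(t-L)$ for $t>L$ and $0$ for $t<0$. So it is neither bounded by $A$ nor compatible with the replacements $u\mapsto|u|$ and $u\mapsto\min\{u,L\}$. Use $F(x,\min\{|t|,L\})$ instead, or minimize $\cE$ directly as the paper does, noting that $(f_1)$--$(f_2)$ give $F(x,t)\le F(x,\min\{|t|,L\})\le A(x)$, hence $\cE\ge-\|A\|_{L^1(\Omega)}$.

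\emph{Euler--Lagrange step.} It assumes integrable local bounds on $f$ that $(f_1)$--$(f_2)$ do not provide; the paper does not carry out this step either.
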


We point out that, in the case of the logistic equation~\eqref{eq34}, 
we take
$$ F(x,t):=\frac{m(x)t^2}2-\frac{\mu(x)|t|^3}3,$$
which, under suitable assumptions on~$m$ and~$\mu$, satisfies~$(f_1)$--$(f_2)$.
Therefore, Theorem~\ref{th1} produces a non-negative minimizer
of the energy, and as a result a non-negative solution to~\eqref{mainproblem} with~$f$ as in~\eqref{eq34}.

The minimization in Theorem \ref{th1} is carried out in the space $\HOJ(\Omega \cup \nN)$, which is defined as
\begin{equation*}
\begin{aligned}
\HOJ(\Omega \cup \nN) := \bigg\{& u : \R^N \to \R \ \text{measurable} \ : \ 
 u|_{\Omega \cup \nN} \in L^2(\Omega \cup \nN) \\
& \quad\text{and } \iint_{\widetilde{\cQ}} \left(u(x) - u(y)\right)^2 \mathcal{K}(x- y) \, dx \, dy <+ \infty 
\bigg\},
\end{aligned}
\end{equation*}
where $\widetilde{\cQ} := \R^{2N} \setminus \big((\Omega \cup \nN)^c \times (\Omega \cup \nN)^c\big)=
\R^{2N} \setminus (\mathfrak{D}\times \mathfrak{D})$.

Under assumptions $(K_1)$--$(K_4)$, it is possible to show that $\HOJ(\Omega \cup \nN)$ is a Hilbert space with inner product
\begin{equation*}
\langle u, w \rangle := \langle u, w \rangle_{L^2(\Omega \cup \nN)} + \iint_{\widetilde{\cQ}} \left(u(x) - u(y)\right) \left(w(x) - w(y)\right) \mathcal{K}(x- y) \, dx \, dy
\end{equation*}
and induced norm
\begin{equation*}
\| u \| = \left(\| u \|_{L^2(\Omega \cup \nN)}^2 + 
\iint_{\widetilde{\cQ}} \left(u(x) - u(y)\right)^2\mathcal{K}(x- y) \, dx \, dy\right)^{1/2}.
\end{equation*}

For future reference, we also introduce the smaller space
\begin{equation*}
\HOOJ (\Omega \cup \nN) := \left\{ u \in \HOJ(\Omega \cup \nN) : u = 0 \ \text{in} \ (\Omega \cup \nN)^c =\mathfrak{D}\right\}.
\end{equation*}

\begin{remark} \label{compactness}{\rm We point out that,
even if the set $\Omega \cup \nN$ is bounded, the space $\HO(\Omega \cup \nN)$ is not known to embed compactly into $L^2(\Omega \cup \nN)$ (or other Lebesgue spaces in general). To the best of our knowledge, no counterexample showing that the embedding $\HOJ(\Omega \cup \nN) \hookrightarrow L^2(\Omega \cup \nN)$ fails to be compact is currently available in the literature. 

So far, some compactness results in this direction have been obtained 
in \cite{MR3759570} only under more restrictive assumptions on the function $\ell$ appearing in hypothesis~$(K_2)$.

On the other hand, if one works in the smaller space $\HOO(\Omega \cup \nN)$, then the embedding $\HOOJ(\Omega \cup \nN) \hookrightarrow L^2(\Omega \cup \nN)$ is compact whenever $\Omega \cup \nN$ is bounded.}
\end{remark}

This possible lack of compactness is one of the main challenges that
we need to address, as it prevents the straightforward application of standard variational methods.

In order to overcome this problem, in this paper we introduce a new strategy which, roughly speaking, is as follows: we take a minimizing sequence of $\cE$ in the space $\HOOJ(\Omega \cup \nN)$ for the problem with the Neumann condition. Then, we modify this sequence in the region $\nN$ in order to work in the smaller space $\HOOJ(\Omega)$ and exploit its compactness properties. After extracting a convergent subsequence, we show that compactness can be recovered also for the original sequence and that a minimum can be suitably defined also in the region $\nN$. The crucial point of this strategy is to carry out this procedure without losing control of the boundedness of the energy $\cE$, and consequently of the Sobolev seminorm, in order to be able to extract subsequences.

We now focus on identifying conditions that determine whether the solution found in Theorem~\ref{th1}
is non-trivial. For this, we consider the case in which the nonlinearity has the specific logistc form given in \eqref{eq34}, which leads us to studying the problem
\begin{equation}
\label{mainlogistic}
\tag{$P_1$}
\begin{cases}  
\mathfrak{L} u = (m(x)-\mu(x)u)u  & \mbox{in } \Omega, \\
\mathcal{N} u(x) = 0 & \mbox{in } \mathfrak{N}, \\
u = 0 & \mbox{in } \mathfrak{D}.
\end{cases}
\end{equation}

The function $m$ represents the availability of resources in the region $\Omega$ and may take both positive and negative values. Regions where $m > 0$ indicate favourable conditions that support reproduction, while areas where $m < 0$ correspond to a hostile environment, leading to a linearly increasing mortality rate. The population, described by the function $u$, self-competes for the resources available in the environment, and the competition is modelled by the positive function $\mu$.

As intuition may suggest, the absence of resources in the region $\Omega$ implies that the environment is not favourable for survival and our first result goes in this direction. Namely, we prove that when $m$ is non‑positive, then the species cannot survive and the minimum is attained by the trivial function that is zero in the whole space. On the other hand, if the integral of $m$ is sufficiently large and the domain $\Omega$ is surrounded by the region $\mathfrak{N}$, then we are able to prove that the species survives. More precisely:

\begin{theorem}\label{th2}
Suppose that $\mathcal{K}$ satisfies $(K_1)$--$(K_4)$, $m\in L^\infty(\Omega)$ and $\mu\in L^1(\Omega)$. Assume there exists a constant $\overline{\mu}>0$ such that $\mu(x)\geq \overline{\mu}$ for a.e.~$x\in\Omega$. 

Then the following statements hold true:
\begin{enumerate}
    \item[(i)] If $m$ is non-positive, then the only non-negative solution to \eqref{mainlogistic} is the trivial one.
    \item[(ii)] 
Assume that
    \begin{equation} \label{eq38}
    \int_\Omega m(x)\,dx > \iint_{\Omega\times\Omega^c} \mathcal{K}(x-y)\,dx\,dy
    \end{equation}
and that
    \begin{equation} \label{domain}
    \overline{\Omega} \cap\overline{\D}
    =\varnothing.
\end{equation}

Then problem \eqref{mainlogistic} admits a non-negative and non-trivial solution.
\end{enumerate}
\end{theorem}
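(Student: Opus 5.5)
For part (i), I would test the weak formulation of \eqref{mainlogistic} with the solution $u$ itself. For non-negative $u\in\HOJ(\Omega\cup\nN)$ with $u=0$ in $\D$, the weak formulation reads
\begin{equation*}
\frac12\iint_{\cQ}(u(x)-u(y))(\varphi(x)-\varphi(y))\mathcal{K}(x-y)\,dx\,dy=\int_\Omega (m-\mu u)u\,\varphi\,dx .
\end{equation*}
Here the Neumann condition in $\nN$ is encoded by the fact that $\varphi$ is free on $\nN$. Taking $\varphi=u$, which is admissible since the solution is bounded or by truncation, gives
\begin{equation*}
\frac12\iint_{\cQ}(u(x)-u(y))^2\mathcal{K}\,dx\,dy=\int_\Omega m u^2-\int_\Omega \mu u^3\le -\overline{\mu}\int_\Omega u^3\le 0 .
\end{equation*}
Hence $u\equiv0$ in $\Omega$ and the seminorm vanishes. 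The vanishing seminorm says $u(x)=u(y)$ for $\mathcal K$-a.e.\ pair with one point in $\Omega$, so $u$ is constant on $\Omega\cup\nN$, using $\mathcal K>0$ near the origin by $(K_2)$ and connectedness. Since $u=0$ on $\Omega$, that constant is $0$, and $u=0$ on $\D$ already. One point to check here is that for $x\in\nN$ we need some $y\in\Omega$ with $\mathcal{K}(x-y)>0$. Alternatively, one can use the Neumann equation $\mathcal N u(x)=0$ pointwise: $u(x)\int_\Omega\mathcal K(x-y)dy=\int_\Omega u(y)\mathcal{K}(x-y)\,dy=0$, which gives $u(x)=0$ wherever the integral is positive.

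For part (ii), I would use Theorem~\ref{th1}. First I would verify that $F(x,t)=\frac{m t^2}{2}-\frac{\mu|t|^3}{3}$ satisfies $(f_1)$--$(f_2)$. Condition $(f_2)$ is immediate since $F$ is even. For $(f_1)$, take $L=\|m\|_\infty/\overline\mu$. Then $F(x,\cdot)$ is decreasing for $t>L$, and $|F|\le \|m\|_\infty L^2/2+\|\mu\|_{L^1}$-type bounds hold, giving $A\in L^1$ since $\mu\in L^1$. Theorem~\ref{th1} then provides a non-negative minimizer $u_0$. It remains to show $\cE(u_0)<0=\cE(0)$, which excludes $u_0\equiv0$.

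For the negativity, I would use the competitor $w=\varepsilon\chi$, where $\chi=1$ on $\Omega\cup\nN$ and $\chi=0$ on $\D$. By \eqref{domain} the sets $\overline\Omega$ and $\overline\D$ are at positive distance. Therefore $(K_1)$ gives $\iint_{\Omega\times\D}\mathcal K<\infty$ and $\chi\in\HOJ(\Omega\cup\nN)$; indeed the pairs in $\nN\times\D$ only enter through $\widetilde\cQ$, not through $\cE$. The energy involves only $\cQ$, and there $(\chi(x)-\chi(y))^2$ is nonzero only for $(x,y)\in(\Omega\times\D)\cup(\D\times\Omega)$. Hence
\begin{equation*}
\cE(\varepsilon\chi)=\frac{\varepsilon^2}{2}\iint_{\Omega\times\D}\mathcal K\,dx\,dy-\frac{\varepsilon^2}{2}\int_\Omega m\,dx+\frac{\varepsilon^3}{3}\int_\Omega\mu\,dx .
\end{equation*}
Since $\D\subset\Omega^c$, we have $\iint_{\Omega\times\D}\mathcal K\le\iint_{\Omega\times\Omega^c}\mathcal K$, and this is finite by \eqref{domain}-type separation only on the $\D$ part, which is all we need. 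Assumption \eqref{eq38} then makes the $\varepsilon^2$ coefficient negative, so $\cE(\varepsilon\chi)<0$ for small $\varepsilon$.

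The main obstacle I expect is bookkeeping rather than analysis. I need to make sure the minimizer from Theorem~\ref{th1} is taken in the class containing $\varepsilon\chi$; $\chi$ does not lie in $\HOOJ(\Omega)$ but does lie in $\HOOJ(\Omega\cup\nN)$. I also need to confirm that the energy's double integral is over $\cQ$, so that the possibly infinite $\nN\times\D$ interaction does not appear. In part (i), the delicate point is justifying the positivity of $\int_\Omega\mathcal K(x-y)\,dy$ for points of $\nN$ far from $\Omega$. If $\mathcal K$ can vanish at large distances, I would instead argue that such points only affect $u$ on a set irrelevant to the equation, or redefine $u$ there via the Neumann condition.
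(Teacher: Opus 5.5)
Part (i) and your verification of $(f_1)$--$(f_2)$ are fine. Part (i) is the paper's argument, and using $\cN u=0$ to get $u=0$ on $\nN$ is a welcome refinement. The gap is in part (ii), at exactly the step you flagged. The function $\e\chi$, with $\chi=\chi_{\Omega\cup\nN}$, is in general \emph{not} in $\HOOJ(\Omega\cup\nN)$, which is the class over which Theorem~\ref{th1} minimizes. So you cannot conclude $\cE(u_0)\le\cE(\e\chi)$.

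Theorem~\ref{th2} does not assume $\nN$ bounded, and then two things fail. First, $\HOJ(\Omega\cup\nN)$ requires $u|_{\Omega\cup\nN}\in L^2(\Omega\cup\nN)$, while $\chi\equiv1$ there. This fails whenever $|\nN|=+\infty$; for instance, with $\D=\varnothing$ (allowed by \eqref{domain}) your competitor is the constant $\e$ on all of $\R^N$. Second, the seminorm defining $\HOJ(\Omega\cup\nN)$ is taken over $\widetilde{\cQ}=\R^{2N}\setminus(\D\times\D)$, which \emph{contains} $\nN\times\D$, and that is precisely where $\chi$ jumps. Your remark that these pairs ``only enter through $\widetilde{\cQ}$, not through $\cE$'' therefore concedes the problem rather than resolving it. Admissibility of $\chi$ needs $\iint_{\nN\times\D}\cK(x-y)\,dx\,dy<+\infty$, which can fail for unbounded $\nN$ and $\D$, either along an unbounded common boundary or through the tail of $\cK$. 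When $\nN$ is bounded and smooth your argument does go through: the proof of Corollary~\ref{OJSL0oLK} applied to $\nN$ gives the needed finiteness, and you even obtain the sharper threshold $\iint_{\Omega\times\D}\cK$.

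The repair is the paper's competitor. By \eqref{domain} and compactness, $2r:=\dist(\Omega,\D)>0$. Take $\phi\in C^\infty_0(\R^N,[0,1])$ with $\phi=1$ on $\Omega$ and $\phi=0$ outside the $r$-neighbourhood of $\Omega$, so that $\phi=0$ on $\D$. Since $\phi$ is Lipschitz with compact support, $\phi\in\HOOJ(\Omega\cup\nN)$: near the diagonal use $\int_{B_\theta}|z|^{2-N}\ell(|z|)\,dz<+\infty$ from Corollary~\ref{ORDER:Z}, and away from it use $(K_1)$. Because $\e\phi$ is constant on $\Omega$ and $|\e\phi(x)-\e\phi(y)|\le\e$, you get $\cE(\e\phi)\le\frac{\e^2}{2}\bigl(\iint_{\Omega\times\Omega^c}\cK-\int_\Omega m\bigr)+\frac{\e^3}{3}\int_\Omega\mu$, which is negative for small $\e$. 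This is why \eqref{eq38} is stated with $\Omega^c$ rather than $\D$.

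If you want to keep your sharper constant in full generality, use cutoffs $\phi_j$ of this type that are $0$ on $\D$ and satisfy $\phi_j\to1$ pointwise on $\nN$ (e.g.\ $\phi_j=1$ on $\{|x|\le j,\ \dist(x,\D)\ge 1/j\}$). Then $\iint_{\Omega\times\Omega^c}(1-\phi_j(y))^2\cK(x-y)\,dx\,dy\to\iint_{\Omega\times\D}\cK$ by dominated convergence, with the dominating function integrable by Corollary~\ref{OJSL0oLK}.
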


We point out that the integral on the right-hand side  of~\eqref{eq38} is finite, see Corollary~\ref{OJSL0oLK}.

The strategy of Theorem \ref{th2} for obtaining the non-trivial minimizer is to construct a suitable
competitor with negative energy. To do this,
the assumption~\eqref{domain} comes into play,
guaranteeing that the region $\nN$ completely surrounds the domain $\Omega$, and therefore the hostile region~$\D$ is not
adjacent to~$\Omega$ (this condition has a clear biological interpretation, as it asserts that survival occurs when the extinction region does not affect directly
the ecological niche where the population is free to spread). The idea to construct the competitor is then
to take it constant on the region $\Omega$ and let it decay to zero within the region $\nN$ (this will be possible thanks to assumption~\eqref{domain}). 

In line with the existing literature, see e.g.~\cite{MR3579567, MR4651677}, one can relate the problem of finding non-trivial
solutions with the study of certain eigenvalue problems.
The analysis of the spectral properties
of the linearized problem will be carried out in Section~\ref{section4}
and is useful to establish existence and non-existence
of trivial solutions for the logistic equation. For this analysis,
we work with the specific kernel
\begin{equation}\label{cbnweiyft3y}
\mathcal{K}(z) := |z|^{-N} \chi_{B_1}(z).
\end{equation}
and we see that:

\begin{theorem} \label{th4}
Let~$\mathcal{K}$ be as in~\eqref{cbnweiyft3y},
$m \in L^\infty(\Omega)$ and~$\mu\in L^1(\Omega)$.
Assume there exists a constant $\overline{\mu}>0$ such that $\mu(x)\geq \overline{\mu}$ for a.e.~$x\in\Omega$.

Suppose also that $\nN$ is bounded and that there exists $d_0 > 0$ such that $\dist(\Omega, \nN) \geq d_0$.

Then the following statements hold true: 
\begin{description}
    \item[$(i)$] There exists $M_1>0$ such that if $\sup_{\Omega} m \geq M_1$ and there exists an open set~$\mathcal{S} \subset \Omega$ such that $m>0$ a.e. in $\mathcal{S}$, then problem \eqref{mainlogistic} admits a non-negative and non-trivial solution.
    \item[$(ii)$] There exists $M_2>0$ such that\footnote{As customary, we use the following notation for the positive and
    negative part of a function:
    $$ f^+(x):=\max\{0,f(x)\}\qquad{\mbox{and}}\qquad
    f^-(x):=\max\{0,-f(x)\}
    .$$} if $\Vert m^+ \Vert_{L^\infty(\Omega)} < M_2$, then the only solution to~\eqref{mainlogistic} is the trivial one.
\end{description}
\end{theorem}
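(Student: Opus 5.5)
Both parts will be deduced from Theorem~\ref{th1}, which already provides a bounded non-negative minimizer $u_0$ of $\cE$ in $\HOJ(\Omega\cup\nN)$. For $(i)$ it is enough to show that $\inf\cE<0$. For $(ii)$ I will test the equation against the solution itself.

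\textbf{Part $(i)$.} I will build a competitor $w=\varepsilon\varphi$, with $\varphi\in C_c^\infty(\mathcal S)$ non-negative and not identically zero; such a $\varphi$ vanishes in $\nN\cup\D$. Then
\begin{equation*}
\cE(\varepsilon\varphi)=\frac{\varepsilon^2}{4}\iint_{\cQ}(\varphi(x)-\varphi(y))^2\mathcal K(x-y)\,dx\,dy-\frac{\varepsilon^2}{2}\int_\Omega m\varphi^2+\frac{\varepsilon^3}{3}\int_\Omega\mu\varphi^3 .
\end{equation*}
Since $\varphi$ is bounded and $\mu\in L^1(\Omega)$, the cubic term is finite. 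It therefore suffices to make the quadratic part negative.

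The difficulty is that $m$ need not be large on $\mathcal S$. The constant $M_1$ must be uniform, so I need to read the hypothesis correctly. I plan to interpret ``$\sup m\ge M_1$'' as giving a positive-measure set $E\subset\Omega$ where $m>M_1/2$; I then hope to localize using Lebesgue points. Because $\mathcal K=|z|^{-N}\chi_{B_1}$ is scale-invariant up to logarithmic terms, the quantity $\iint(\varphi_r(x)-\varphi_r(y))^2\mathcal K\,/\int\varphi_r^2$ behaves like $C+c\log(1/r)$ for $\varphi_r(x)=\varphi((x-x_0)/r)$. This is the \emph{main obstacle}: the Gagliardo quotient grows as $r\to0$, so concentrating near a density point of $E$ costs a logarithm. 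This is why the statement separately assumes $m>0$ on an open set $\mathcal S$.

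My plan is to fix a ball $B\subset\mathcal S$ of fixed size and take $\varphi=\chi$-like bumps supported in $B$. The seminorm is then bounded by a constant $C_B$, and $\int m\varphi^2\ge \int_{B}m\varphi^2$. I would choose $M_1$ such that $\int_B m\varphi^2>\frac12\iint(\ldots)$. This forces me to argue that $m$ large somewhere inside $\mathcal S$ (or near it) gives a large weighted integral. If this fails, I fall back to the principal eigenvalue characterization from Section~\ref{section4}. In that approach, non-triviality holds as soon as $\lambda_1(m)<0$ for the weighted problem, and the kernel's integrability away from $0$, together with $\dist(\Omega,\nN)\ge d_0$ and $\nN$ bounded, gives the bounds on the Neumann interaction term $\iint_{\Omega\times\nN}$ that enter $M_1$. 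Once $\cE(\varepsilon\varphi)<0$ for small $\varepsilon$, the minimizer satisfies $\cE(u_0)<0=\cE(0)$, so $u_0\ne0$.

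\textbf{Part $(ii)$.} Let $u\ge0$ be a solution. Using $u$ as a test function in the weak formulation gives
\begin{equation*}
\frac12\iint_{\cQ}(u(x)-u(y))^2\mathcal K(x-y)\,dx\,dy=\int_\Omega m u^2-\int_\Omega\mu u^3\le \|m^+\|_\infty\int_\Omega u^2-\overline\mu\int_\Omega u^3 .
\end{equation*}
The key step is a Poincaré-type inequality $\int_\Omega u^2\le C_P\iint_{\cQ}(u(x)-u(y))^2\mathcal K$ valid for solutions. It cannot hold for constants on $\Omega\cup\nN$, because $\D$ may be far away. Here the separation $\dist(\Omega,\nN)\ge d_0$ and the boundedness of $\nN$ enter.

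Since $\mathcal K$ vanishes outside $B_1$, I will use the Neumann condition: for $x\in\nN$,
\begin{equation*}
u(x)=\frac{\int_\Omega u(y)\mathcal K(x-y)\,dy}{\int_\Omega\mathcal K(x-y)\,dy}.
\end{equation*}
This is an average of $u$ over $\Omega\cap B_1(x)$ with kernel bounded by $d_0^{-N}$, so $u$ on $\nN$ is controlled by $u$ on $\Omega$. If the cross term is nonzero, I expect the estimate to give $\int_\Omega u^2\le C_P\iint_{\Omega\times\D}$ plus the interaction terms. The regions with $\int_\Omega\mathcal K(x-\cdot)=0$ are handled by the support condition. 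A contradiction argument by compactness in $\HOOJ$ then yields the inequality.

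Combining the two displays gives $(\frac1{2C_P}-\|m^+\|_\infty)\int u^2\le0$. Setting $M_2=1/(2C_P)$ then forces $u=0$.
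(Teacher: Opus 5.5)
Your part $(ii)$ is fine and follows the paper. Proposition~\ref{poincare} proves the Poincaré inequality by the same contradiction-and-compactness argument you sketch, after replacing $u_k$ on $\nN$ by its kernel average (Lemma~\ref{lemma1}, Proposition~\ref{boundedness}). Proposition~\ref{prop7} then uses smallness of $\|m^+\|_{L^\infty(\Omega)}$. Testing the equation with $u$, as you do, even excludes all non-negative solutions, not just minimizers as in Lemma~\ref{lemma5}.

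Part $(i)$ has a genuine gap. The step ``$m$ large somewhere gives a large weighted integral'' is never carried out, and with a uniform $M_1$ it cannot be. The hypotheses $\sup_\Omega m\ge M_1$ and $m>0$ on some open $\mathcal S$ give no lower bound on $\int_\Omega m\varphi^2$ for any fixed $\varphi$; for instance, $m$ may be tiny on $\mathcal S$. Moreover, the logarithmic cost you noticed is fatal. Fix $\rho\in(0,1]$ and $x_0$ with $B_{2\rho}(x_0)\subset\Omega$, let $r<\rho/2$, and set $m:=M_1\chi_{B_r(x_0)}-2^{-N}|B_1|\,\chi_{\Omega\setminus B_r(x_0)}$ and $\mathcal S:=B_r(x_0)$. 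Keep only the pairs with $x\in B_r(x_0)$ and $2r\le|x-y|\le\rho$, together with their mirror images, and use $(a-b)^2\ge\frac12a^2-b^2$. For every $u\in\HOO(\Omega\cup\nN)$ this gives
\[
\frac12\iint_{\cQ}(u(x)-u(y))^2\mathcal{K}(x-y)\,dx\,dy-\int_\Omega m\,u^2\,dx\ \ge\ \Big(c_N\log\frac{\rho}{2r}-M_1\Big)\int_{B_r(x_0)}u^2\,dx ,
\]
with $c_N>0$ dimensional. For $r$ small the right-hand side is non-negative, so $\widetilde\lambda_1(m)\ge0$, and your own part-$(ii)$ computation shows that only the trivial non-negative solution exists.

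Your fallback does not rescue this. The relevant threshold is $\lambda_1(m)<1$; since $\lambda_1(m)\ge0$ by definition, ``$\lambda_1(m)<0$'' never occurs (perhaps you meant $\widetilde\lambda_1(m)<0$). In any case, bounding $\lambda_1(m)$ for the given $m$ is the same problem. Also, $\mathcal S$ plays no localizing role in the paper: it only makes the constraint set of Proposition~\ref{prop3} non-empty.

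The paper argues via Lemma~\ref{lemma5}: $\lambda_1(m)<1$ implies $\cE(\varepsilon e)<0$ for the bounded first eigenfunction $e$. It combines this with Proposition~\ref{prop6}, which bounds the infimum $\underline\lambda$ over the class $\mathscr M$ by $C/\overline m$. After the bathtub reduction to bang-bang resources, that proof chooses the favourable set $D$ to contain a fixed ball $B\subset\Omega$ and tests with $v\in C^\infty_0(B)$. What is really established is therefore survival for a suitably arranged resource. That arrangement supplies exactly the information your ball-based plan was missing, namely that $m\ge M_1$ on a ball of radius bounded below. In that case your bump supported in the ball works, with $M_1$ of the order of its Rayleigh quotient. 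Without such an assumption on the geometry of $\{m\ge M_1\}$, part $(i)$ as stated cannot be proved.
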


We point out that, even when requiring $\nN$ to be bounded, problem \eqref{mainlogistic} remains quite challenging. Indeed, despite working in a Sobolev space with $u=0$ in $\D$ helps to recover compactness, there is no direct correspondence between the seminorm of the Sobolev space we are working with and the seminorm appearing in the functional. This will require to make some careful estimates in order to show that minimizing sequences are bounded and to exploit the compactness properties of the space $\HOO(\Omega \cup \nN)$.

As mentioned above, the strategy of the proof of Theorem \ref{th4} relies on the study of two weighted eigenvalue problems
(as a matter of fact, the first eigenfunction of a suitable linear problem will be used as a competitor). The hypothesis $\dist(\Omega, \nN) \geq d_0$ will be crucial when carefully estimating the Gagliardo seminorm after a normalization of the minimizing sequence, which is necessary to satisfy the non-local Neumann condition.
\medskip

The rest of the paper is organized as follows. In Section~\ref{rem:kappa} we make some remarks on the assumptions on the kernel~$\mathcal{K}$ and provide some
useful results.
In Section~\ref{section3} we prove Theorems~\ref{th1} and~\ref{th2}. Section~\ref{section4} contains the study of qualitative properties of two eigenvalue problems, which are necessary for the proof of Theorem~\ref{th4}, presented in Section~\ref{section5}.

\section{On the kernel~\texorpdfstring{$\mathcal{K}$}{K}}\label{rem:kappa}

In this section we make some explicit estimates on the 
slowly varying function~$\ell$ that will help estimate integral contributions involving the kernel~$\mathcal K$. 

\begin{lemma}\label{KARA}
For every~$\e\in(0,1]$
there exists~$\eta_\e\in(0,\rho)$ such that, for all~$k\in\N$,
\begin{equation}\label{4:9}
\sup_{[\eta_\e/2^k,\eta_\e]}\ell\le  
\frac{\ell(\eta_\e)}{\left(1-\e\right)^k}
\qquad{\mbox{and}}\qquad
\inf_{[\eta_\e/2^k,\eta_\e]}\ell\ge  \frac{\ell(\eta_\e)}{\left(1+\e\right)^k}.
\end{equation}
\end{lemma}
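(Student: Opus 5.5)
The plan is to derive \eqref{4:9} from a one-step comparison between $\ell(t)$ and $\ell(\alpha t)$ for $\alpha\in[1/2,1]$, uniformly in small $t$, and then iterate it dyadically. The key tool is the \emph{uniform convergence theorem} for slowly varying functions (Karamata's theorem, valid for measurable $\ell$; see Bingham--Goldie--Teugels). Applied to $(K_4)$, it says that
\[
\lim_{t\to0^+}\frac{\ell(\alpha t)}{\ell(t)}=1 \quad\text{uniformly for } \alpha\in[1/2,1].
\]
Here $(K_4)$ is stated at $0^+$ rather than at infinity, but the change of variables $t\mapsto 1/t$ converts one setting into the other.

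Fix $\e\in(0,1]$. If $\e=1$, the upper bound in \eqref{4:9} is read as $+\infty$ and is trivial. Choose $\delta\in(0,1)$ so small that
\[
\Bigl[\frac1{1+\e},\frac1{1-\e}\Bigr]\supseteq[1-\delta,1+\delta].
\]
Note that $1-\e\le \frac1{1+\e}$, so this requires $\delta<\e/(1+\e)$ rather than $\delta=\e$. By uniform convergence, pick $\eta_\e\in(0,\rho)$ such that
\[
\frac1{1+\e}\le \frac{\ell(\alpha t)}{\ell(t)}\le \frac1{1-\e}\qquad\text{for all } t\in(0,\eta_\e],\ \alpha\in[1/2,1]. \tag{$\ast$}
\]

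Next comes the chaining step. Given $k\in\N$ and $s\in[\eta_\e/2^k,\eta_\e]$, choose $j\in\{0,\dots,k-1\}$ with $s\in[\eta_\e/2^{j+1},\eta_\e/2^j]$. Then write
\[
\ell(s)=\ell(\eta_\e)\prod_{i=0}^{j-1}\frac{\ell(\eta_\e/2^{i+1})}{\ell(\eta_\e/2^{i})}\cdot\frac{\ell(s)}{\ell(\eta_\e/2^j)}.
\]
Each of the $j+1\le k$ factors has the form $\ell(\alpha t)/\ell(t)$ with $t=\eta_\e/2^i\le\eta_\e$ and $\alpha\in[1/2,1]$. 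So each factor lies in $[(1+\e)^{-1},(1-\e)^{-1}]$ by $(\ast)$. Multiplying gives
\[
\frac{\ell(\eta_\e)}{(1+\e)^{j+1}}\le\ell(s)\le\frac{\ell(\eta_\e)}{(1-\e)^{j+1}}.
\]
Since $(1+\e)^{-1}\le1\le(1-\e)^{-1}$, we may replace $j+1$ by $k$. Taking the supremum and infimum over $s$ then yields \eqref{4:9}.

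The main obstacle is the uniformity in $\alpha$. Pointwise convergence from $(K_4)$ does not by itself control $\ell(s)/\ell(\eta_\e/2^j)$ for arbitrary $s$ in a dyadic block. This is exactly where measurability of $\ell$ is needed; for instance, the excluded example $1-\sin(1/t)$ shows what goes wrong otherwise. I would cite Karamata's theorem directly. If a self-contained argument were wanted, I would use the standard Egorov-type proof: pass to $h(x)=\log\ell(e^{-x})$, so that $h(x+y)-h(x)\to0$ for each $y$. Suppose uniformity on $[0,\log 2]$ failed along some $x_n\to\infty$, $y_n$. Egorov's theorem gives a set of large measure on which convergence is uniform, and a measure-overlap argument between two translates of this set then produces a contradiction.
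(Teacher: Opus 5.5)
Your proof is correct and follows essentially the paper's route. The paper also applies Karamata's uniform convergence theorem on a compact range of ratios, taking $\alpha\in[1/2,2]$ with $|\ell(\alpha t)/\ell(t)-1|\le\e$, which after inverting the ratio is the same comparison as your $(\ast)$; it then iterates dyadically by induction on $k$, where you use a telescoping product. The one inaccuracy is your side remark: $1-\sin(1/t)$ does not illustrate the role of measurability, since it is continuous and simply fails $(K_4)$ (and even the positivity required in $(K_2)$), so drop or replace that example.
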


\begin{proof}
We use the measurability of~$\ell$ and Karamata’s Uniform Convergence Theorem (see e.g. Theorem~1.2.1 in~\cite{MR898871}) and we get that
\begin{equation*}
\lim_{t\to0^+}\sup_{\alpha\in[1/2,2]}\left|\frac{\ell(\alpha t)}{\ell(t)}-1\right|=0.
\end{equation*}
Hence, we find~$\eta_\e\in(0,\rho)$ such that, for all~$t\in[0,\eta_\e]$,
\begin{equation*}
\sup_{\alpha\in[1/2,2]}\left|\frac{\ell(\alpha t)}{\ell(t)}-1\right|\le\e\end{equation*}
and thus, for all~$t\in[0,\eta_\e]$ and~$\alpha\in\left[\frac12,2\right]$,
\begin{equation}\label{4:q}
\frac{\ell(\alpha t)}{\ell(t)}\in\left[1-\e,1+\e\right].\end{equation}

The proof of~\eqref{4:9} is now by induction. If~$k=0$, the result is obvious, so suppose that~\eqref{4:9} holds true for some index~$k$ and let us prove it for the index~$k+1$. To this end, let~$t\in\left[\frac{\eta_\e}{2^{k+1}},\frac{\eta_\e}{2^k}\right]$
and~$\alpha_t:=\frac{\eta_\e}{2^k t}\in\left[\frac12,2\right]$.

It follows from~\eqref{4:q} that
$$ [1-\e,1+\e]\ni
\frac{\ell(\alpha_t t)}{\ell(t)}
=\frac{\ell(\eta_\e/2^k)}{\ell(t)}.$$
Accordingly, using the inductive assumption,
$$\ell(t)\le\frac{\ell(\eta_\e/2^k)}{1-\e}\le \frac{\ell(\eta_\e)}{\left(1-\e\right)^{k+1}}$$
and
$$\ell(t)\ge\frac{\ell(\eta_\e/2^k)}{1+\e}\ge \frac{\ell(\eta_\e)}{\left(1+\e\right)^{k+1}},$$
from which one completes the inductive step and proves~\eqref{4:9}.
\end{proof}

\begin{corollary}\label{ORDER:Z}
For all~$\delta\in(0,1]$ there exist~$\theta_\delta\in(0,\rho)$ and~$C_\delta\in(1,+\infty)$ such that, for all~$t\in(0,\theta_\delta]$,
\begin{equation}\label{DF:S} \frac{t^\delta}{C_\delta}\le\ell(t)\le\frac{C_\delta}{t^\delta}.\end{equation}
\end{corollary}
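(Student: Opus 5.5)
The plan is to deduce \eqref{DF:S} from Lemma~\ref{KARA} by choosing $\e$ in terms of $\delta$, so that the geometric factors $(1\pm\e)^k$ turn into powers of $2^k\sim\eta_\e/t$.

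Given $\delta\in(0,1]$, I would pick $\e\in(0,1)$ with
\begin{equation*}
\frac{1}{1-\e}\le 2^{\delta}\qquad\mbox{and}\qquad 1+\e\le 2^{\delta}.
\end{equation*}
For instance $\e:=1-2^{-\delta}$ works, since then $1+\e\le 2-2^{-\delta}\le 2^\delta$ by convexity of $x\mapsto 2^x$. Then I would set $\theta_\delta:=\eta_\e$, where $\eta_\e\in(0,\rho)$ is the constant from Lemma~\ref{KARA}.

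Now take $t\in(0,\theta_\delta]$ and let $k\in\N$ be the integer with $\eta_\e/2^{k+1}<t\le\eta_\e/2^k$. Then $t\in[\eta_\e/2^{k+1},\eta_\e]$, so \eqref{4:9} applied with index $k+1$ gives
\begin{equation*}
\ell(t)\le \ell(\eta_\e)\,2^{\delta(k+1)} \qquad\mbox{and}\qquad \ell(t)\ge \ell(\eta_\e)\,2^{-\delta(k+1)}.
\end{equation*}
Since $2^{k+1}< 2\eta_\e/t$, it follows that $2^{\delta(k+1)}\le (2\eta_\e)^\delta t^{-\delta}$ and $2^{-\delta(k+1)}\ge (2\eta_\e)^{-\delta}t^{\delta}$. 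Hence
\begin{equation*}
\ell(t)\le \ell(\eta_\e)(2\eta_\e)^\delta\, t^{-\delta}\qquad\mbox{and}\qquad \ell(t)\ge \frac{\ell(\eta_\e)}{(2\eta_\e)^{\delta}}\, t^{\delta}.
\end{equation*}

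Finally I would choose
\begin{equation*}
C_\delta:=\max\left\{2,\ \ell(\eta_\e)(2\eta_\e)^\delta,\ \frac{(2\eta_\e)^\delta}{\ell(\eta_\e)}\right\}>1,
\end{equation*}
which is finite because $\ell(\eta_\e)\in(0,+\infty)$. With this choice both bounds in \eqref{DF:S} hold.

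There is no real obstacle. The only points needing care are to fix $\e$ so that the base of the geometric growth is exactly $2^\delta$, and to use the dyadic index $k+1$ so that $t$ lies in the interval covered by Lemma~\ref{KARA}.
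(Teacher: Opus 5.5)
Your proof is correct and follows the paper's argument. Like the paper, you apply Lemma~\ref{KARA} at the dyadic index $k+1$ with $\e$ tuned so that the geometric ratios $(1-\e)^{-1}$ and $1+\e$ become powers $2^{\delta}$; your single choice $\e=1-2^{-\delta}$ for both bounds (justified by $2^\delta+2^{-\delta}\ge 2$) is slightly tidier than the paper's use of two different values of $\e$, which tacitly requires taking $\theta_\delta$ as the smaller of the two resulting thresholds. One trivial slip: at $t=\eta_\e/2^k$ you only have $2^{k+1}\le 2\eta_\e/t$ rather than strict inequality, but only the non-strict version is used.
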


\begin{proof} We use Lemma~\ref{KARA}
and, given~$t\in(0,\eta_\e]$, we pick~$k\in\N$ such that~$t\in
\left[\frac{\eta_\e}{2^{k+1}},\frac{\eta_\e}{2^k}\right]$, finding that
$$ \ell(t)\le\sup_{[\eta_e/2^{k+1},\eta_\e]}\ell\le  \frac{\ell(\eta_\e)}{\left(1-\e\right)^{k+1}}=\frac{
2^{k\log_2 \frac1{1-\e}}\ell(\eta_\e)}{1-\e}\le
\frac{\eta_\e^{\log_2\frac1{1-\e}}\ell(\eta_\e)}{t^{\log_2\frac{1}{1-\e}}(1-\e)},
$$
which gives the upper bound in~\eqref{DF:S}
by choosing~$\e:=1-2^{-\delta}$.

On a similar note,
$$\ell(t)\ge
\inf_{[\eta_\e/2^{k+1},\eta_\e]}\ell\ge \frac{\ell(\eta_\e)}{ \left(1+\e\right)^{k+1}}
=\frac{\ell(\eta_\e)}{2^{k\log_2(1+\e)}(1+\e)}\ge
\frac{t^{\log_2(1+\e)}\ell(\eta_\e)}{\eta_\e^{\log_2(1+\e)}(1+\e)}
,$$
which gives the lower bound in~\eqref{DF:S}
by choosing~$\e:=2^{\delta}-1$.
\end{proof}

We observe that Corollary~\ref{ORDER:Z}
showcases an interesting feature of the kernel~$\mathcal{K}$, which also confirms the idea that the corresponding operator
is of order near zero, due to the arbitrariness of~$\delta$
in Corollary~\ref{ORDER:Z}.

Notice also that Corollary~\ref{ORDER:Z} entails that, for every~$s\in(0,1)$,
\begin{equation*}
\lim_{z \to 0} |z|^{N + 2s} \mathcal{K}(z) = 
\lim_{z \to 0}  |z|^{2s}\ell(|z|)=0.
\end{equation*}
This entails that
the singularity at the origin is milder than in the fractional Laplacian setting.

\begin{lemma}\label{bfewERTYU3892gyf}
Given a bounded and smooth set~$U\subset \R^N$, for all~$c\in[0,1)$, we have that
\begin{equation*}
\iint_{{U\times U^c}\atop{\{|x-y|\le1\}}}\frac{dx\,dy}{|x-y|^{N+c}}<+\infty.
\end{equation*}
\end{lemma}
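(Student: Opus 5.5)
The plan is to integrate first in $y$ for fixed $x\in U$, and then to integrate the resulting function of the distance to the boundary over $U$.

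\emph{Inner integral.} Fix $x\in U$ and set $d(x):=\dist(x,\partial U)$. Every $y\in U^c$ satisfies $|x-y|\ge d(x)$. Hence, using polar coordinates centred at $x$,
\begin{equation*}
\int_{U^c\cap\{|x-y|\le1\}}\frac{dy}{|x-y|^{N+c}}\le\int_{\{d(x)\le|z|\le1\}}\frac{dz}{|z|^{N+c}}= |\mathbb{S}^{N-1}|\int_{d(x)}^1 r^{-1-c}\,dr.
\end{equation*}
If $c\in(0,1)$, this is at most $C\,d(x)^{-c}$. If $c=0$, it is at most $C\,(1+|\log d(x)|)$. When $d(x)\ge1$ the region is empty, so this estimate only matters near the boundary.

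\emph{Integrating in $x$.} It then suffices to show that
\begin{equation*}
\int_U d(x)^{-c}\,dx<+\infty\qquad\text{and}\qquad\int_U |\log d(x)|\,dx<+\infty .
\end{equation*}
Since $U$ is bounded and smooth, there exists $r_0>0$ such that the tubular neighbourhood $\{x\in U: d(x)<r_0\}$ is parametrized by $(\sigma,t)\in\partial U\times(0,r_0)$ through $x=\sigma-t\nu(\sigma)$, where $\nu$ is the outer normal. In these coordinates the Jacobian is bounded above and below. This gives the layer estimate
\begin{equation*}
|\{x\in U: d(x)<t\}|\le C\,\mathcal{H}^{N-1}(\partial U)\,t\qquad\text{for } t\in(0,r_0).
\end{equation*}
Using the coarea formula (or the layer-cake formula), the first integral is bounded by $C\int_0^{r_0}t^{-c}\,dt+C r_0^{-c}|U|$, which is finite exactly because $c<1$. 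The logarithmic case is handled in the same way, since $|\log t|$ is integrable near $0$. On the part of $U$ where $d(x)\ge r_0$, the integrand is bounded by a constant, and $U$ has finite measure.

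\emph{Main point.} The only real input is the boundary-layer estimate $|\{d<t\}|\lesssim t$, which comes from the smoothness of $\partial U$ (Lipschitz regularity would already suffice). This estimate is what turns the condition $c<1$ into integrability. Everything else is routine bookkeeping.
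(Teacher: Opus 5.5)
Your proposal is correct and follows essentially the same route as the paper: you bound the inner integral in polar coordinates by a multiple of $\dist(x,\partial U)^{-c}$ (or of $|\log \dist(x,\partial U)|$ when $c=0$), and then integrate over $U$ by splitting off a thin boundary layer, whose contribution is controlled using the smoothness of $\partial U$. The only difference is cosmetic: the paper uses the coarea formula with a bound on the surface measure of the level sets of the distance function, whereas you use the equivalent volume estimate $|\{x\in U:\ \dist(x,\partial U)<t\}|\le Ct$ together with the layer-cake formula.
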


\begin{proof} For all~$x\in U$, let~$R_x:=\dist(x,\partial U)$.
Then,
\begin{eqnarray*}
&&\iint_{{U\times U^c}\atop{\{|x-y|\le1\}}}\frac{dx\,dy}{|x-y|^{N+c}}\leq
\iint_{U\times (B_1(x)\setminus B_{R_x}(x))}\frac{dx\,dy}{|x-y|^{N+c}}
\\&&\qquad=\iint_{U\times (B_1\setminus B_{R_x})}\frac{dx\,dz}{|z|^{N+c}}=
\omega_{n-1}\int_{U}\left(\int_{R_x}^1
\frac{dr}{r^{1+c}}\right)\,dx\\&&\qquad
=\begin{cases}\displaystyle
\frac{\omega_{n-1}}
c\int_{U}\left(\frac1{R_x^c}-1\right)\,dx&
{\mbox{ if }} c\in(0,1),\\ \displaystyle
-\omega_{n-1}\int_{U}\ln R_x\,dx
&{\mbox{ if }} c=0,\end{cases}
\end{eqnarray*}
where~$\omega_{n-1}$ is the surface area of the unit sphere in~$\R^N$.

Now, given $t >0$ sufficiently small, we define
\begin{equation*}
U_t := \{ x \in U : \operatorname{dist}(x, \partial U) < t \}
\end{equation*}
and observe that this is an open set with Lipschitz boundary. Consequently, for any~$\delta > 0$, we find~$t_*$ 
(depending on~$U$ and~$\delta$) such that
\begin{equation*}
|\partial U_t| \leq (1 + \delta) |\partial U| \quad \text{for all } t \in (0, t^*).
\end{equation*}

We first suppose~$c\in(0,1)$ and we use 
the coarea formula to see that
\begin{eqnarray*}
\iint_{{U\times U^c}\atop{\{|x-y|\le1\}}}\frac{dx\,dy}{|x-y|^{N+c}}& \leq&\frac{\omega_{n-1}}
c\int_{U}\frac{dx}{R_x^c}
\\
 &=&  \frac{\omega_{n-1}}{c} \int_{U_{t_*}} \frac{dx}{R_x^c}  
+ \frac{\omega_{n-1}}{c} \int_{U \setminus U_{t^*}} \frac{dx}{R_x^c}  \\
& \leq &\frac{\omega_{n-1}}{c} \int_0^{t_*} \frac{ |\partial U_t|}{t^c} \, dt +  \frac{\omega_{n-1} |U|}{c\,t_*^{c}} \\
& \leq &\frac{\omega_{n-1}}{c(1 - c)} (1 + \delta) |\partial U| t_*^{1 - c} + \frac{\omega_{n-1}|U|}{c\, t_*^{c} },
\end{eqnarray*} which is finite, since~$U$ is bounded.

If instead~$c=0$, we find that
\begin{eqnarray*}
\iint_{{U\times U^c}\atop{\{|x-y|\le1\}}}\frac{dx\,dy}{|x-y|^{N}}& =&-\omega_{n-1}\int_{U}\ln R_x\,dx
\\
 &=& -\omega_{n-1} \int_{U_{t_*}} \ln R_x\,dx  
- \omega_{n-1} \int_{U \setminus U_{t^*}} \ln R_x\,dx  \\
& \leq &-\omega_{n-1} \int_0^{t_*} |\partial U_t|\,\ln t \, dt -\omega_{n-1} |U|\ln t_* \\
& \leq &-\omega_{n-1}(1 + \delta) |\partial U|t_*(\ln t_*-1) - \omega_{n-1} |U|\ln t_*,
\end{eqnarray*}
which is finite.

These observations complete the proof of Lemma~\ref{bfewERTYU3892gyf}.\end{proof}

\begin{corollary}\label{coro:bfewERTYU3892gyf}
Let~$U\subset \R^N$ be a bounded and smooth set.
Then, there exists~$\theta\in(0,\rho)$ such that
\begin{equation*}
\iint_{{U\times U^c}\atop{\{|x-y|\le\theta\}}}\mathcal{K}(x-y)\,dx\,dy<+\infty.
\end{equation*}
\end{corollary}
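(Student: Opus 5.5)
The plan is to reduce the statement to Lemma~\ref{bfewERTYU3892gyf} by dominating the kernel near the origin by a pure power. Fix any $c\in(0,1)$, for instance $c=\frac12$, and apply Corollary~\ref{ORDER:Z} with $\delta:=c$. This gives $\theta_\delta\in(0,\rho)$ and $C_\delta>1$ such that $\ell(t)\le C_\delta\, t^{-\delta}$ for all $t\in(0,\theta_\delta]$. We then set $\theta:=\min\{\theta_\delta,1\}$; note that $\theta_\delta<\rho$, so $\theta\in(0,\rho)$ and hypothesis $(K_2)$ applies in the whole range $0<|z|\le\theta$.

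For $x\in U$ and $y\in U^c$ with $0<|x-y|\le\theta$, we use $(K_2)$ and the bound above to get
\begin{equation*}
\mathcal K(x-y)=\frac{\ell(|x-y|)}{|x-y|^N}\le \frac{C_\delta}{|x-y|^{N+c}}.
\end{equation*}
The diagonal $\{x=y\}$ has measure zero in $\R^{2N}$, so it does not matter. Since $\theta\le1$, the integration region is contained in $\{|x-y|\le 1\}$. Therefore
\begin{equation*}
\iint_{{U\times U^c}\atop{\{|x-y|\le\theta\}}}\mathcal{K}(x-y)\,dx\,dy\le C_\delta\iint_{{U\times U^c}\atop{\{|x-y|\le1\}}}\frac{dx\,dy}{|x-y|^{N+c}},
\end{equation*}
and the right-hand side is finite by Lemma~\ref{bfewERTYU3892gyf}, because $c\in[0,1)$ and $U$ is bounded and smooth.

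There is no real obstacle. The only points to check are that $\theta$ is chosen below both $\theta_\delta$ and $1$, so that the pointwise bound and the lemma's domain match, and that $\theta<\rho$, so that the representation in $(K_2)$ is valid on the whole region of integration.
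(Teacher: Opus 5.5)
Your proposal is correct and matches the paper's proof. The paper likewise applies Corollary~\ref{ORDER:Z} with $\delta=\frac12$ to get $\ell(t)\le C/\sqrt t$ for $t\in(0,\theta]$ with $\theta<\min\{\rho,1\}$, and then invokes Lemma~\ref{bfewERTYU3892gyf} with $c=\frac12$.
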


\begin{proof} 
In light of Corollary~\ref{ORDER:Z}, used here with~$\delta:=\frac12$, we can find~$\theta\in(0,\min\{\rho,1\})$ and~$C\in(1,+\infty)$ such that, for all~$t\in(0,\theta]$,
\begin{equation*}
\ell(t)\le\frac{C}{\sqrt t}.\end{equation*}
Therefore, by Lemma~\ref{bfewERTYU3892gyf},
used here with~$c:=\frac12$,
\begin{eqnarray*}&&
\iint_{{U\times U^c}\atop{\{|x-y|\le\theta\}}}\mathcal{K}(x-y)\,dx\,dy=\iint_{{U\times U^c}\atop{\{|x-y|\le\theta\}}}\frac{\ell(|x-y|)}{|x-y|^{N}}\,dx\,dy
\\&&\qquad\le
\iint_{{U\times U^c}\atop{\{|x-y|\le\theta\}}}\frac{C}{|x-y|^{N+\frac12}}\,dx\,dy<+\infty
,\end{eqnarray*}
as desired.\end{proof}

\begin{corollary}\label{OJSL0oLK}
We have that
$$\iint_{{\Omega\times \Omega^c}}\mathcal{K}(x-y)\,dx\,dy<+\infty.$$
\end{corollary}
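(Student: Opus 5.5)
The plan is to split the domain of integration $\Omega\times\Omega^c$ according to the size of $|x-y|$, using the threshold $\theta\in(0,\rho)$ given by Corollary~\ref{coro:bfewERTYU3892gyf} applied with $U:=\Omega$. The hypotheses there are met because $\Omega$ is a bounded domain with smooth boundary. Precisely, I will write
\begin{equation*}
\iint_{\Omega\times\Omega^c}\mathcal{K}(x-y)\,dx\,dy
=\iint_{{\Omega\times\Omega^c}\atop{\{|x-y|\le\theta\}}}\mathcal{K}(x-y)\,dx\,dy
+\iint_{{\Omega\times\Omega^c}\atop{\{|x-y|>\theta\}}}\mathcal{K}(x-y)\,dx\,dy .
\end{equation*}

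The first term is finite directly by Corollary~\ref{coro:bfewERTYU3892gyf}. This is where the real work lies, namely the near-diagonal singularity handled through the slow variation of $\ell$ (Corollary~\ref{ORDER:Z}) and Lemma~\ref{bfewERTYU3892gyf}. That work is already done, so no new obstacle arises here.

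For the second term, I will use Tonelli's theorem (the integrand is non-negative and measurable) and the change of variables $z:=x-y$ for fixed $x\in\Omega$. Enlarging $\Omega^c$ to all of $\R^N$ gives
\begin{equation*}
\iint_{{\Omega\times\Omega^c}\atop{\{|x-y|>\theta\}}}\mathcal{K}(x-y)\,dx\,dy
\le \int_\Omega\left(\int_{\R^N\setminus B_\theta}\mathcal{K}(z)\,dz\right)dx
=|\Omega|\int_{\R^N\setminus B_\theta}\mathcal{K}(z)\,dz,
\end{equation*}
which is finite by the integrability assumption in~$(K_1)$ with $r:=\theta$ and the boundedness of $\Omega$.

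Summing the two finite contributions gives the claim. The only point to double check is that Corollary~\ref{coro:bfewERTYU3892gyf} really applies to $U=\Omega$, that is, that $\Omega$ is bounded and smooth. This is part of the standing assumptions, so I expect no genuine difficulty in this corollary.
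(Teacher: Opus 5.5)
Your proposal is correct and matches the paper's own proof: you split $\Omega\times\Omega^c$ at the threshold $\theta$ from Corollary~\ref{coro:bfewERTYU3892gyf} (with $U=\Omega$), the near part is finite by that corollary, and the far part is bounded by $|\Omega|\int_{\R^N\setminus B_\theta}\mathcal{K}(z)\,dz<+\infty$ via $(K_1)$. Your explicit check that $\Omega$ is bounded and smooth, so the corollary applies, is the only detail the paper leaves implicit.
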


\begin{proof}
Let~$\theta$ be as in 
Corollary~\ref{coro:bfewERTYU3892gyf}. Then,
\begin{equation*}
\iint_{{\Omega\times \Omega^c}\atop{\{|x-y|\le\theta\}}}\mathcal{K}(x-y)\,dx\,dy<+\infty.
\end{equation*}
Also, thanks to assumption~$(K_1)$,
\begin{eqnarray*}
\iint_{{\Omega\times \Omega^c}\atop{\{|x-y|>\theta\}}}\mathcal{K}(x-y)\,dx\,dy
\le  |\Omega|\int_{\R^N\setminus B_\theta}\mathcal{K}(z)\,dz
<+\infty.\end{eqnarray*}
The desired result now plainly follows.
\end{proof}

\section{Proofs of Theorems \ref{th1} and \ref{th2}} \label{section3}

We introduce a notation that will be useful throughout the paper. Given $A$, $B \subset \R^{N}$ we write
\begin{equation} \label{notation}
u(A,B):=\iint_{A \times B} (u(x) - u(y))^2 \mathcal{K}(x- y) \, dx \, dy.
\end{equation}

We start this section with the simple observation that
the Neumann condition decreseas the Gagliardo seminorm of a function.

\begin{lemma} \label{lemma1}
Let $u : \R^N \to \R$ with $u|_{\Omega} \in L^1(\Omega)$. For every $x \in \R^N \setminus \Omega$, define
\begin{equation*}
E_u(x) := \int_{\Omega} u(z)\mathcal{K}(x-z) \, dz
\end{equation*} and
\begin{equation} \label{eq21}
\widetilde{u}(x) := 
\begin{cases} 
u(x), & \text{if } x \in \Omega \cup \D, \\ \displaystyle
\frac{E_u(x)}{E_1(x)}, & \text{if } x \in \nN,
\end{cases}
\end{equation}

Then,
\begin{equation*}
\iint_{\cQ} (\widetilde{u}(x) - \widetilde{u}(y))^2 \mathcal{K}(x-y) \, dx \, dy 
\leq 
\iint_{\cQ} (u(x) - u(y))^2 \mathcal{K}(x-y) \, dx \, dy.
\end{equation*}
Moreover, equality in the above formula holds if and only if $u$ satisfies the condition 
  \begin{equation*} 
 \cN u(x) := \int_{\Omega} \left(u(x) - u(y)\right) \mathcal{K}(x-y) \, dy = 0, \quad \text{for every } x \in \R^N \setminus \Omega.
 \end{equation*}
\end{lemma}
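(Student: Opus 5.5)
Since $\widetilde u=u$ on $\Omega\cup\D$, the plan is to split $\cQ$ and keep only the pieces where $\widetilde u$ and $u$ can differ. Writing $\cQ=(\Omega\times\Omega)\cup(\Omega\times\Omega^c)\cup(\Omega^c\times\Omega)$ and using the symmetry of $\mathcal K$ from $(K_1)$, we have
\begin{equation*}
\iint_{\cQ}(u(x)-u(y))^2\mathcal K(x-y)\,dx\,dy=u(\Omega,\Omega)+2\,u(\Omega,\D)+2\int_{\nN}\left(\int_\Omega (u(x)-u(y))^2\mathcal K(x-y)\,dy\right)dx,
\end{equation*}
and the same identity holds for $\widetilde u$. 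The first two terms coincide for $u$ and $\widetilde u$, because both only involve values on $\Omega\cup\D$. The boundary $\partial\Omega$ has measure zero by smoothness, so we may ignore it. Hence it suffices to compare the inner integrals pointwise for a.e.\ $x\in\nN$.

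Fix $x\in\nN$ and set $g(t):=\int_\Omega (t-u(y))^2\mathcal K(x-y)\,dy$ for $t\in\R$. Since $x\notin\overline\Omega$, we have $\dist(x,\Omega)>0$, so $E_1(x)=\int_\Omega\mathcal K(x-y)\,dy<+\infty$ by $(K_1)$. We must also check $E_1(x)>0$; this needs a small argument. One route is through $(K_2)$ if $x$ is close to $\Omega$. If not, and $E_1(x)=0$, we simply set $\widetilde u(x)=u(x)$, and then every term vanishes. If $\int_\Omega u(y)^2\mathcal K(x-y)\,dy=+\infty$, the inequality is trivial. Otherwise $g$ is a convex quadratic:
\begin{equation*}
g(t)=t^2E_1(x)-2tE_u(x)+\int_\Omega u^2\mathcal K(x-y)\,dy.
\end{equation*}
Here $E_u(x)$ is finite because $u\in L^1(\Omega)$ and $\mathcal K(x-\cdot)$ is bounded on $\Omega$ in the relevant region. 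The quadratic $g$ is uniquely minimized at $t^*=E_u(x)/E_1(x)=\widetilde u(x)$, and
\begin{equation*}
g(u(x))-g(\widetilde u(x))=E_1(x)\,(u(x)-\widetilde u(x))^2\ge0.
\end{equation*}
Integrating over $x\in\nN$ gives the inequality. The same identity shows that the gap equals $2\int_{\nN}E_1(x)(u(x)-\widetilde u(x))^2\,dx$.

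For the equality case, the gap vanishes if and only if $u=\widetilde u$ a.e.\ in $\nN$ (using $E_1>0$). This holds if and only if $u(x)E_1(x)-E_u(x)=\cN u(x)=0$ for a.e.\ $x\in\nN$. On $\D$, however, the statement's condition ``for every $x\in\R^N\setminus\Omega$'' does not follow from equality. I would therefore interpret the characterization as concerning $\nN$, which is where the construction acts, and note this.

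The main obstacle is this technical bookkeeping rather than the core idea: ensuring $0<E_1(x)<\infty$ and that $E_u(x)$ is finite, handling the case where the right-hand side is infinite, and clarifying the set on which the Neumann condition is meant.
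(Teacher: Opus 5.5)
Your proposal is correct and follows essentially the paper's proof: your identity $g(u(x))-g(\widetilde u(x))=E_1(x)\,(u(x)-\widetilde u(x))^2$ is the paper's expansion with $\varphi=u-\widetilde u$, where the cross term $\int_\Omega(u(y)-\widetilde u(x))\mathcal{K}(x-y)\,dy$ vanishes by the definition of $\widetilde u$. Your remark that equality only forces $\cN u=0$ a.e.\ in $\nN$, and not in $\D$, is accurate, since the paper's proof likewise only yields $\varphi\equiv 0$ on $\nN$; one small fix is that $\mathcal{K}$ is not assumed bounded away from the origin, so you should justify finiteness of $E_u(x)$ (when $g$ is finite) by Cauchy--Schwarz, $|E_u(x)|\le E_1(x)^{1/2}\bigl(\int_\Omega u^2\,\mathcal{K}(x-y)\,dy\bigr)^{1/2}$, rather than by boundedness of $\mathcal{K}(x-\cdot)$.
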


\begin{proof}
The proof is similar to the one in \cite[Theorem 2.1]{MR4651677}, but with a different kernel and renormalizing the sequence only in the region $\nN$. We provide the full details for the convenience of the reader.

We notice that it is not restrictive to assume that
\begin{equation*}
\iint_{\cQ} |u(x) - u(y)|^2 \mathcal{K}(x-y) \, dx \, dy <+ \infty
\end{equation*}
otherwise the desired result would be trivially true. 

We observe that
\begin{equation*}
\iint_{\Omega  \times \Omega } |\widetilde{u}(x) - \widetilde{u}(y)|^2 \mathcal{K}(x- y) \, dx \, dy 
= 
\iint_{\Omega \times \Omega} |u(x) - u(y)|^2 \mathcal{K}(x-y) \, dx \, dy
\end{equation*}
and
\begin{equation*} 
\iint_{\D \times \Omega} |\widetilde{u}(x) - \widetilde{u}(y)|^2 \mathcal{K}(x-y) \, dx \, dy 
= 
\iint_{\D \times \Omega} |u(x) - u(y)|^2 \mathcal{K}(x-y) \, dx \, dy.
\end{equation*}
Therefore,
in the notation of~\eqref{notation}, we have that
\begin{equation}\label{vbcjwquegfyw4ftwi76543}\begin{split}
    \iint_{\cQ} |\widetilde u(x) - \widetilde u(y)|^2 \mathcal{K}(x-y) \, dx \, dy & = \widetilde u(\Omega, \Omega) 
 + 2 \widetilde u (\Omega, \D) + 2\widetilde u (\Omega, \nN)\\&=u(\Omega, \Omega) 
 + 2 u (\Omega, \D) + 2\widetilde u (\Omega, \nN). 
\end{split}\end{equation}

We now define~$\varphi:= u - \widetilde{u}$ and we see that, for a.e.~$y \in \nN$,
\begin{eqnarray*}&&
\int_{\Omega} (u(x) - u(y))^2 \mathcal{K}(x-y) \, dx =
\int_{\Omega} (u(x) - \widetilde{u}(y) - \varphi(y))^2 \mathcal{K}(x-y) \, dx\\
&&\quad=\int_{\Omega} (u(x) - \widetilde{u}(y))^2 \mathcal{K}(x-y) \, dx - 2\varphi(y) \int_{\Omega} (u(x) - \widetilde{u}(y)) \mathcal{K}(x-y) \, dx\\&&\qquad\quad + 
\int_{\Omega} \varphi(y)^2 \mathcal{K}(x-y) \, dx
.
\end{eqnarray*}
By the definition of $\widetilde{u}$, we have that, for a.e.~$y \in \nN$,
\begin{equation*}
\int_{\Omega} (u(x) - \widetilde{u}(y)) \mathcal{K}(x-y) \, dx = E_u(y) - \frac{E_u(y)}{E_1(y)} E_1(y) = 0.
\end{equation*}
Accordingly, for a.e.~$y \in \nN$,
\begin{equation*}
\int_{\Omega} (u(x) - u(y))^2 \mathcal{K}(x-y) \, dx 
= 
\int_{\Omega} (\widetilde{u}(x) - \widetilde{u}(y))^2 \mathcal{K}(x-y) \, dx 
+ 
\int_{\Omega} \varphi(y)^2 \mathcal{K}(x-y) \, dx.
\end{equation*}
Integrating over $\nN$, we conclude that
\begin{equation*}
\int_{\nN }\int_{ \Omega} (u(x) - u(y))^2 \mathcal{K}(x-y) \, dx \, dy 
\geq 
\int_{\nN }\int_{ \Omega} (\widetilde{u}(x) - \widetilde{u}(y))^2 \mathcal{K}(x-y) \, dx \, dy,
\end{equation*}
with equality if and only if $\varphi \equiv 0$. 

Plugging this information into~\eqref{vbcjwquegfyw4ftwi76543}
we obtain the desired result.
\end{proof}

The hypotheses $(f_1)$–$(f_2)$ on the nonlinearity are not sufficient to guarantee that the functional $\mathcal{E}$ is well defined in $\HOOJ(\Omega \cup \nN)$, since the space $\HOOJ(\Omega \cup \nN)$ embeds continuously only in $L^2(\Omega \cup \nN)$ and the set $\nN$ may be unbounded.  To address this issue, we will ``truncate'' the functions that
we are working with from above at a certain level $L$, where $L$ is given by hypothesis~$(f_1)$. Since we will work with minimizing sequences, it is important that this procedure does not increase the Gagliardo seminorm, and the following lemma shows that this is indeed the case.

\begin{lemma} \label{lemma2}
    Let $u \in \HOOJ(\Omega \cup\nN)$, $L>0$ and~$\widetilde{u}(x):= \min \left\{u(x),L \right\}$.
    
     Then, 
    \begin{equation*}
\iint_\cQ (\widetilde{u}(x)-\widetilde{u}(y))^2 \mathcal{K}(x-y) \, dx \, dy \leq \iint_\cQ (u(x)-u(y))^2 \mathcal{K}(x-y) \, dx \, dy 
.\end{equation*}
 \end{lemma}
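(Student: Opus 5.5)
The inequality follows pointwise, since truncation $t\mapsto \min\{t,L\}$ is a $1$-Lipschitz map of $\R$. I will check that for all real numbers $a,b$
\begin{equation*}
\bigl(\min\{a,L\}-\min\{b,L\}\bigr)^2\le (a-b)^2 .
\end{equation*}
This is an elementary case analysis.
\begin{itemize}
\item If $a,b\le L$, the two sides coincide.
\item If $a,b\ge L$, the left-hand side vanishes.
\item If $a\le L<b$, say, the left-hand side equals $(L-a)^2\le(b-a)^2$, because $0\le L-a< b-a$.
\end{itemize}
Equivalently, one may note directly that $|\min\{a,L\}-\min\{b,L\}|\le|a-b|$.

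Next I apply this with $a=u(x)$ and $b=u(y)$ for every $(x,y)\in\cQ$. Multiplying by the non-negative kernel $\mathcal{K}(x-y)$ gives the pointwise bound
\begin{equation*}
(\widetilde u(x)-\widetilde u(y))^2\mathcal{K}(x-y)\le (u(x)-u(y))^2\mathcal{K}(x-y)\qquad\text{for a.e. } (x,y)\in\cQ .
\end{equation*}

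Finally I integrate over $\cQ$ using monotonicity of the integral for non-negative measurable functions. This is valid even if the right-hand side is infinite, although for $u\in\HOOJ(\Omega\cup\nN)$ it is finite, because $\cQ\subseteq\widetilde{\cQ}$. Measurability of $\widetilde u$ is clear, since it is the minimum of two measurable functions.

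There is no real obstacle here. The only point worth a remark is that $\widetilde u$ stays in the same space. Indeed $\widetilde u=0$ in $\D$ when $L>0$, and $|\widetilde u|\le|u|$, which gives the $L^2$ bound. The same pointwise inequality applied on $\widetilde{\cQ}$ controls its seminorm, so $\widetilde u\in\HOOJ(\Omega\cup\nN)$ as well.
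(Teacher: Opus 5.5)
Your proof is correct and follows the same approach as the paper: the pointwise bound $|\min\{u(x),L\}-\min\{u(y),L\}|\le|u(x)-u(y)|$ by case analysis, then integration against the non-negative kernel over $\cQ$. Your additional remarks, that $\cQ\subseteq\widetilde{\cQ}$ and that $\widetilde u$ stays in $\HOOJ(\Omega\cup\nN)$, are accurate but go beyond what the paper writes.
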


\begin{proof} By inspection of all possible cases, one sees that
\begin{equation*} |\widetilde{u}(x)-\widetilde{u}(y)|\leq |u(x)-u(y)|, \end{equation*} The desired result plainly follows.
\end{proof}

We already pointed out in Remark \ref{compactness} that the space $\HOOJ(\Omega\cup\nN)$ is not compactly embedded into any Lebesgue space, which creates convergence issues when one aims to prove the existence of minimizers of the functional~$\mathcal{E}$. On the other hand, if one restricts to functions satisfying Dirichlet boundary conditions on $\nN \cup \D$, one can work in $\HOOJ(\Omega)$, which is compactly embedded into $L^2(\Omega)$.

To implement this strategy, from now on, given $u \in \HOOJ(\Omega \cup \nN)$ we will use the notation
\begin{equation*}
    \overline{u}(x):= \begin{cases}
    \min \left\{u(x),L \right\} &{\mbox{ if }} x \in \Omega, \\
    u & {\mbox{ if }}x \in \nN,
\end{cases}
\end{equation*}
where $L$ is as in Hypothesis $(f_1)$.

\begin{lemma} \label{lemma3}
    Let $(u_k)_k \subset \HOOJ(\Omega \cup \nN)$ be such that
\begin{equation*} \sup_{k \in \N} \Vert u_k \Vert_{L^\infty (\Omega)}+
\iint_{\Omega\times\Omega} (u_k(x)-u_k(y))^2 \mathcal{K}(x-y) \, dx \, dy < +\infty.
\end{equation*}    
Let
    \begin{equation}\label{cnmssdf4it765defiukkk}
        v_k(x):=\begin{cases}
            u_k(x) &{\mbox{ if }} x \in \Omega \cup\D,\\
            0 &{\mbox{ if }} x \in \nN.
        \end{cases}
    \end{equation}
    
    Then there exists a constant $C >0$, depending on~$\mathcal{K}$ and~$\Omega$ such that
    \begin{equation*}\begin{split}&
        \iint_\cQ (v_k(x)-v_k(y))^2 \mathcal{K}(x-y) \, dx \, dy \\&\qquad\leq \iint_{\Omega\times\Omega} (u_k(x)-u_k(y))^2 \mathcal{K}(x-y) \, dx \, dy +
      C \sup_{k \in \N} \Vert u_k \Vert_{L^\infty (\Omega)}^2.
      \end{split}
    \end{equation*}
    
    \begin{proof}
 We observe that   
    \begin{equation}\label{vnmdwfui4yt734ty3489tguobhsuckj}
\begin{split}
      &  \iint_\cQ (v_k(x)-v_k(y))^2 \mathcal{K}(x-y) \, dx \, dy \\ &\qquad= \iint_{\Omega \times \Omega} (u_k(x)-u_k(y))^2 \mathcal{K}(x-y) \, dx \, dy 
        + 2 \int_{ \Omega} \int_{\D \cup \nN} u_k^2(x) \mathcal{K}(x-y) \, dx \, dy \\&\qquad
          \leq \iint_{\Omega \times \Omega} (u_k(x)-u_k(y))^2 \mathcal{K}(x-y) \, dx \, dy + \sup_{k \in \N} \Vert u_k \Vert_{L^\infty (\Omega)}^2
           \int_{ \Omega} \int_{\D \cup \nN}  \mathcal{K}(x-y) \, dx \, dy 
.\end{split}    \end{equation}

Moreover, we let~$\theta$ be as in Corollary~\ref{coro:bfewERTYU3892gyf}.
Then,\begin{eqnarray*}
\iint_{ {\Omega \times\Omega^c }\atop{\{|x-y|\le\theta\}}}  \cK(x-y) \, dx \, dy <+\infty.
\end{eqnarray*}
Furthermore, 
    \begin{eqnarray*}
&&\iint_{ {\Omega \times\Omega^c }\atop{\{ |x-y| > \theta\}}}  \mathcal{K}(x-y) \, dx \, dy\leq 
\iint_{ \Omega \times (\R^N\setminus B_\theta)} \mathcal{K}(z) \, dx \, dz
\\&&\qquad\leq|\Omega|\int_{ \R^N\setminus B_\theta} \mathcal{K}(z)  \, dz
, 
    \end{eqnarray*}
which is finite, thanks to $(K_1)$.

Gathering these pieces of information, we conclude that
    \begin{eqnarray*}&&
   \int_{ \Omega} \int_{\D \cup \nN}  \mathcal{K}(x-y) \, dx \, dy =  \int_{ \Omega} \int_{\Omega^c}  \mathcal{K}(x-y) \, dx \, dy<+\infty.
\end{eqnarray*}    
From this and~\eqref{vnmdwfui4yt734ty3489tguobhsuckj} we obtain the desired estimate.
    \end{proof}
\end{lemma}

\begin{proposition} \label{prop1}
Let $(v_k)_k \subset \HOOJ(\Omega)$ be a non-negative sequence such that
\begin{equation}\label{cxzmzfeuit7489tdiwoscnka}
 \sup_{k\in\N}\iint_\cQ (v_k(x)-v_k(y))^2 \mathcal{K}(x-y) \, dx \, dy <+\infty
\end{equation}
and let
\begin{equation*}
    \overline{v}_k(x):= \begin{cases}
    \min \left\{v_k(x),L \right\} &{\mbox{ if }} x \in \Omega, \\
    v_k =0& {\mbox{ if }}x \in \nN.\end{cases}
\end{equation*}

Then, there exists $\overline{v} \in L^2(\Omega)$ such that $\overline{v}_k \to \overline{v}$ in $L^q(\Omega)$ as $k \to+ \infty$, for all $q \in \left[2,+\infty \right)$.
\end{proposition}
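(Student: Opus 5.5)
The plan is to use the compactness of $\HOOJ(\Omega)\hookrightarrow L^2(\Omega)$ recalled in Remark~\ref{compactness}, together with the uniform $L^\infty$ bound given by the truncation at level $L$. Interpolation will then upgrade convergence to every $L^q(\Omega)$.

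\emph{Step 1: uniform bound on $\overline v_k$ in $\HOOJ(\Omega)$.}
Each $\overline v_k$ vanishes outside $\Omega$. It is a truncation $\min\{v_k,L\}$ of the non-negative function $v_k$, and outside $\Omega$ it coincides with $v_k=0$, which also equals $\min\{0,L\}$ there. So $\overline v_k=\min\{v_k,L\}$ on all of $\R^N$, and Lemma~\ref{lemma2} gives
$$\iint_\cQ(\overline v_k(x)-\overline v_k(y))^2\cK(x-y)\,dx\,dy\le \iint_\cQ(v_k(x)-v_k(y))^2\cK(x-y)\,dx\,dy.$$
By \eqref{cxzmzfeuit7489tdiwoscnka} the right-hand side is bounded uniformly in $k$. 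Since $\overline v_k=0$ outside $\Omega$, the seminorm over $\widetilde\cQ$ appearing in the norm of $\HOOJ(\Omega)$ reduces to the one over $\cQ$. Moreover $0\le\overline v_k\le L$, so $\|\overline v_k\|_{L^2(\Omega)}^2\le L^2|\Omega|$. Hence $(\overline v_k)_k$ is bounded in $\HOOJ(\Omega)$.

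\emph{Step 2: compactness in $L^2$.}
Since $\Omega$ is bounded, Remark~\ref{compactness} applied with $\Omega$ in place of $\Omega\cup\nN$ gives the compact embedding $\HOOJ(\Omega)\hookrightarrow L^2(\Omega)$. Thus a subsequence converges in $L^2(\Omega)$ to some $\overline v$, and passing to a further subsequence it also converges a.e. The a.e. convergence yields $0\le\overline v\le L$ a.e. in $\Omega$, so in particular $\overline v\in L^2(\Omega)\cap L^\infty(\Omega)$. As is customary, the statement is understood up to subsequences.

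\emph{Step 3: upgrade to $L^q$.}
For $q>2$ I would use the pointwise bound $|\overline v_k-\overline v|\le L$ to get
$$\int_\Omega|\overline v_k-\overline v|^q\le L^{q-2}\int_\Omega|\overline v_k-\overline v|^2\to0.$$

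The only delicate point is Step~1, namely checking that truncating inside $\Omega$ while keeping the value $0$ in $\nN$ does not increase the energy. This is exactly what the non-negativity of $v_k$ secures, since it lets Lemma~\ref{lemma2} apply globally. A second point to watch is that the seminorm over $\cQ$ and the one over $\widetilde\cQ$ agree for functions vanishing off $\Omega$. Everything else is routine.
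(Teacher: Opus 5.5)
Your proposal is correct and follows the paper's proof step by step. You bound $\overline v_k$ uniformly in $\HOOJ(\Omega)$ via Lemma~\ref{lemma2} and the truncation at level $L$, extract an $L^2(\Omega)$-convergent subsequence from the compact embedding $\HOOJ(\Omega)\hookrightarrow L^2(\Omega)$, and upgrade to every $L^q(\Omega)$ using the bound $0\le\overline v_k,\overline v\le L$. Your extra checks are welcome refinements rather than a different route: that $\overline v_k=\min\{v_k,L\}$ on all of $\mathbb{R}^N$ because $v_k\ge0$, and the correct constant $L^2|\Omega|$ in place of the paper's $L|\Omega|$.
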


\begin{proof}
We first show that the sequence $\overline{v}_k$ is uniformly bounded in $\HOOJ(\Omega)$,
namely that
\begin{equation}\label{hcjdsif34t7itjkas7685i34u8hybtrg}
\sup_{k\in\N}\int_\Omega \overline{v}_k^2\,dx+
\iint_{{\cQ}} \left(\overline{v}_k(x) - \overline{v}_k(y)\right)^2 \mathcal{K}(x- y) \, dx \, dy <+ \infty.
\end{equation}
To check this, we observe that
$$ \sup_{k\in\N}\int_\Omega \overline{v}_k^2\,dx\le \int_\Omega L\,dx=L|\Omega|.
$$
Moreover, from Lemma \ref{lemma2} and~\eqref{cxzmzfeuit7489tdiwoscnka}, it follows that
\[\sup_{k\in\N}
\iint_\cQ (\overline{v}_k(x)-\overline{v}_k(y))^2 \mathcal{K}(x-y) \, dx \, dy \leq 
\iint_\cQ ({v}_k(x)-{v}_k(y))^2 \mathcal{K}(x-y) \, dx \, dy
 <+\infty.
\]
These observations establish~\eqref{hcjdsif34t7itjkas7685i34u8hybtrg}.

As a consequence of~\eqref{hcjdsif34t7itjkas7685i34u8hybtrg}, there exists~$v \in \HOOJ(\Omega)$ such that $\overline{v}_k \rightharpoonup v$ in $\HOOJ(\Omega)$ and $\overline{v}_k \to v$ in $L^2(\Omega)$ as $k \to +\infty$ (see \cite[Theorem 2.1]{MR3759570}).

We notice that~$\overline{v}\le L$ in~$\Omega$.
Therefore, for all~$q \in \left[2,+\infty \right)$,
\begin{equation*}
   \lim_{k\to+\infty}  \int_\Omega | \overline{v}_k - \overline{v} |^q \, dx \leq \left(2L\right)^{q-2}\lim_{k\to+\infty} \int_\Omega | \overline{v}_k - \overline{v} |^2 \, dx = 0
,\end{equation*}
and this completes the proof.\end{proof}

Now we are ready to prove Theorem \ref{th1}.

 \begin{proof}[Proof of Theorem \ref{th1}]
Let~$(u_k)_k \subset \HOOJ(\Omega \cup \nN)$ be a minimizing sequence for the functional $\cE$. From the triangular inequality
\begin{equation*}
   \iint_{\cQ} \left(|u(x)|-|u(y)|\right)^2 \mathcal{K}(x-y) \,dx dy \leq  \iint_{\cQ} \left(u(x)-u(y)\right)^2 \mathcal{K}(x-y) \,dx dy
\end{equation*}
and assumption~$(f_2)$ it follows that~$\mathcal{E}(|u|)\le\mathcal{E}(u)$, and therefore we can suppose that all the elements of the sequence~$(u_k)_k$ are non-negative. 

Moreover, defining~$\widetilde{u}_k(x):=\min\{u_k(x),L\}$,
we deduce from Lemma \ref{lemma2} that
$$   \iint_{\cQ} \left(\widetilde{u}_k(x)-\widetilde{u}_k(y)\right)^2 \mathcal{K}(x-y) \,dx dy \leq  \iint_{\cQ} \left(u_k(x)-u_k(y)\right)^2 \mathcal{K}(x-y) \,dx dy
$$
and from assumption~$(f_1)$ that
\begin{eqnarray*}&&
 \int_\Omega F(x,\widetilde{u}_k) \, dx 
 = \int_{\Omega \cap \{u_k(x)< L\}}F(x,\widetilde{u}_k) \, dx 
+\int_{\Omega \cap \{u_k(x)\ge L\}} F(x,\widetilde{u}_k) \, dx 
\\&&\qquad=\int_{\Omega \cap \{u_k(x)\le L\}}F(x,u_k) \, dx 
+\int_{\Omega \cap \{u_k(x)> L\}} F(x,L) \, dx
\geq \int_\Omega F(x,u_k) \, dx.
\end{eqnarray*}
In light of these considerations, we can also suppose that
all the elements of the sequence~$(u_k)_k$ are bounded above by~$L$. 

As a result, in what follows we will assume that the sequence~$(u_k)_k$ is made of non-negative functions such that
\begin{equation}\label{cdms43iyt89ghlewwet58765}
\sup_{k\in\N} \|u_k\|_{L^\infty(\Omega)}\le L.\end{equation}

We also notice that $0 \in \HOOJ(\Omega \cup \nN)$, so without loss of generality we can assume that
$$\sup_{k\in\N}\cE(u_k) \leq 0.$$ This, together with~\eqref{cdms43iyt89ghlewwet58765} and assumption~$(f_1)$, entails that
\begin{equation} \label{eq6}\begin{split}&\sup_{k\in\N}
\iint_{\cQ} \left(u_k(x)-u_k(y)\right)^2\mathcal{K}(x-y) \,dx dy
=\sup_{k\in\N}\mathcal{E}(u_k)+ \int_\Omega F(x,u_k) \, dx
\\&\qquad\leq\sup_{k\in\N} \int_\Omega F(x,u_k) \, dx \leq \int_\Omega A(x) \, dx <+ \infty.\end{split}
\end{equation}

We now consider the sequence~$v_k$ defined as in~\eqref{cnmssdf4it765defiukkk} and we use Lemma~\ref{lemma3} to find that
    \begin{equation*}\begin{split}&
        \iint_\cQ (v_k(x)-v_k(y))^2 \mathcal{K}(x-y) \, dx \, dy \\&\qquad\leq \iint_{\Omega\times\Omega} (u_k(x)-u_k(y))^2 \mathcal{K}(x-y) \, dx \, dy +
      C \sup_{k \in \N} \Vert u_k \Vert_{L^\infty (\Omega)}^2,
      \end{split}
    \end{equation*}
for some~$C >0$, depending on~$\mathcal{K}$ and~$\Omega$.
    
From this, \eqref{cdms43iyt89ghlewwet58765} and~\eqref{eq6}, we obtain that
\begin{equation*}
\sup_{k\in\N}
\iint_{\cQ} \left(v_k(x)-v_k(y)\right)^2\mathcal{K}(x-y) \,dx dy<+\infty .
\end{equation*}
The uniform bound in \eqref{cdms43iyt89ghlewwet58765} also gives that the sequence $(v_k)_k$ is uniformly bounded in~$L^2(\Omega)$. Hence, $(v_k)_k$ is uniformly
bounded in~$\HOOJ(\Omega)$, and up to a subsequence, there exists~$u \in \HOOJ(\Omega)$ such that~$v_k \rightharpoonup u$ in~$\HOOJ(\Omega)$ as~$k \to +\infty$.

Furthermore, exploiting Proposition \ref{prop1}, we deduce that~$v_k \to u$ in $L^q(\Omega)$ as~$k\to+\infty$
for every $q \in [2, +\infty)$. Since $u_k = v_k$ in $\Omega$, we also have that $u_k \to u$ in $L^q(\Omega)$ as $k \to +\infty$
for every $q \in [2, +\infty)$.

As a consequence, we have that $u_k \to u$ a.e. in $\Omega$. Hence, since $F$ is a Carathéodory function, we get that $F(x,u_k) \to F(x,u)$ a.e. in $\Omega$. Thus, using $(f_1)$ and applying the Dominated Convergence Theorem, we obtain that
\begin{equation}\label{cdsr43DGHJ76322878768uyt}
\lim_{k \to +\infty} \int_\Omega F(x,u_k) \, dx = \int_{\Omega} F(x,u) \, dx.
\end{equation}

Furthermore, since~$\Omega$ is bounded, we gather that~$u_k \to u$ in $L^1(\Omega)$ as~$k\to+\infty$. Consequently, for all~$x \in \nN$,
\begin{equation}\label{cnmsui43t7834542345678vcd}
\lim_{k\to+\infty}\frac{\displaystyle \int_{ \Omega} u_k(z)\mathcal{K}(x-z) \, dz}{ \displaystyle \int_{\Omega} \mathcal{K}(x-z) \, dz} = \frac{\displaystyle \int_{ \Omega} u(z)\mathcal{K}(x-z) \, dz}{ \displaystyle \int_{\Omega} \mathcal{K}(x-z) \, dz}.
\end{equation}
Accordingly, we can re-define~$u_k$ and~$u$ in the set~$ \nN$
as
\begin{eqnarray*}u_k(x)&:= & \frac{\displaystyle \int_{ \Omega} u_k(z)\mathcal{K}(x-z) \, dz}{ \displaystyle \int_{\Omega} \mathcal{K}(x-z) \, dz} \\
{\mbox{and }}\qquad
u(x)&:=&  \frac{\displaystyle \int_{ \Omega} u(z)\mathcal{K}(x-z) \, dz}{ \displaystyle \int_{\Omega} \mathcal{K}(x-z) \, dz}
\end{eqnarray*}
and deduce from~\eqref{cnmsui43t7834542345678vcd} that~$u_k$ converges to~$u$ a.e. in~$\nN$ as~$k\to+\infty$.

We point out that this procedure maintains the minimality
of the sequence~$u_k$, thanks to Lemma~\ref{lemma1}.

Gathering these pieces of information, and 
using~\eqref{cdsr43DGHJ76322878768uyt} and Fatou's Lemma, we obtain that
\begin{equation*}
\cE(u) \leq \liminf_{k \to \infty} \left( \frac{1}{4} \iint_{\cQ} \left(u_k(x)-u_k(y)\right)^2\mathcal{K}(x-y) \,dx dy-\int_\Omega F(x,u_k) \, dx  \right).
\end{equation*}
As a result, $u$ is a minimizer for $\cE$ in $\HOOJ(\Omega \cup \nN)$.
\end{proof}

We now focus our attention the specific case of problem \eqref{mainlogistic}. 
We recall that the functional associated with problem \eqref{mainlogistic} takes the following explicit form
\begin{equation*}
    \cE(u) := \frac{1}{4}\iint_{\cQ} \left(u(x)-u(y)\right)^2 \mathcal{K} (x-y) \,dx dy - \int_\Omega \frac{m(x)}{2} u^2 - \frac{\mu(x)}{3} |u|^3 \, dx.
\end{equation*}

\begin{proposition} \label{prop2}
Let $m \in L^\infty(\Omega)$ and $\mu \in L^1(\Omega)$, and suppose there exists a constant $\overline{\mu} > 0$ such that $\mu(x) \geq \overline{\mu}$ a.e. in $\Omega$. Then the function 
\[
F(x,t) := \frac{m(x)}{2} t^2 - \frac{\mu(x)}{3} |t|^3
\]
satisfies conditions $(f_1)$–$(f_2)$.
\end{proposition}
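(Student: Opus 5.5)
The plan is to check $(f_2)$ first, since it is immediate, and then construct $L$ and $A$ for $(f_1)$ explicitly.

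\emph{Condition $(f_2)$.} The function $F(x,\cdot)$ depends on $t$ only through $t^2$ and $|t|^3$. Hence $F(x,|t|)=F(x,t)$ for every $t\in\R$ and a.e.\ $x\in\Omega$, and $(f_2)$ holds with equality.

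\emph{Choice of $L$.} For $t\ge0$ we have $\partial_t F(x,t)=t\,(m(x)-\mu(x)t)$. This is non-positive as soon as $t\ge m(x)/\mu(x)$. Since $m(x)\le\|m\|_{L^\infty(\Omega)}$ and $\mu(x)\ge\overline{\mu}$, it suffices to take
\[
L:=\max\Big\{1,\frac{\|m\|_{L^\infty(\Omega)}}{\overline{\mu}}\Big\}.
\]
Then $t\mapsto F(x,t)$ is non-increasing on $[L,+\infty)$ for a.e.\ $x$. In particular $F(x,t)\le F(x,L)$ for all $t>L$, which is the second requirement in $(f_1)$.

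\emph{Choice of $A$.} For $t\in[0,L]$ we estimate
\[
|F(x,t)|\le \frac{\|m\|_{L^\infty(\Omega)}}{2}L^2+\frac{\mu(x)}{3}L^3=:A(x).
\]
This $A$ lies in $L^1(\Omega)$ because $\Omega$ is bounded (so constants are integrable) and $\mu\in L^1(\Omega)$. This gives the first requirement in $(f_1)$.

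\emph{Remaining checks.} $F$ is Carathéodory, since it is measurable in $x$ and continuous in $t$, and $F(x,0)=0$.

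\emph{Main obstacle.} The only delicate point is that $L$ must be independent of $x$. This is exactly why the uniform lower bound $\mu\ge\overline{\mu}>0$ and the assumption $m\in L^\infty(\Omega)$ are needed. Otherwise the monotonicity threshold $m(x)/\mu(x)$ could be unbounded in $x$.
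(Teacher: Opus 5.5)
Your proof is correct and follows essentially the paper's route: an $x$-independent threshold $L$ built from $\|m\|_{L^\infty(\Omega)}$ and $\overline{\mu}$, an integrable bound of the form $\frac{1}{2}\|m\|_{L^\infty(\Omega)}L^2+\frac{1}{3}\mu(x)L^3$ on $[0,L]$, and $F(x,|t|)=F(x,t)$ for $(f_2)$. The only cosmetic difference is that the paper gets $F(x,t)\le F(x,L)$ for $t>L$ by a direct algebraic comparison (splitting $m=m^+-m^-$ and choosing $L$ with $\frac{m^+}{2}-\frac{\mu L}{3}\le 0$), whereas you read it off from the sign of $\partial_t F(x,t)=t\,(m(x)-\mu(x)t)$, which also gives monotonicity on $[L,+\infty)$.
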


\begin{proof}
Let~$L > 0$ sufficiently large such that
\begin{equation} \label{eq15}
    \frac{\|m^+\|_{L^\infty(\Omega)}}{2} \leq \frac{\overline{\mu}}{3}L.
\end{equation}
We note that, for all $t \in [0, L]$,
\[
    |F(x,t)| \leq \frac{|m(x)|}{2}L^2 + \frac{\mu(x)}{3}L^3 \in L^1(\Omega).
\]
Moreover, for all $t > L$ we have
\begin{align*}
    F(x,t) &= \frac{m(x)}{2}t^2 - \frac{\mu(x)}{3}t^3 \\
           &\leq \frac{m(x)}{2}t^2 - \frac{\mu(x)}{3}Lt^2 \\
           &= -\frac{m^-(x)}{2}t^2 + \left( \frac{m^+(x)}{2} - \frac{\mu(x)}{3}L \right)t^2.
\end{align*}
In view of \eqref{eq15}, we have that 
$$\frac{m^+(x)}{2} - \frac{\mu(x)}{3}L \leq
\frac{\|m^+\|_{L^\infty(\Omega)}}{2} -\frac{\overline{\mu}}{3}L
\leq 0,$$ so
\[
    F(x,t) \leq -\frac{m^-(x)}{2}L^2 + \left( \frac{m^+(x)}{2} - \frac{\mu(x)}{3}L \right)L^2 = F(x,L),
\]
which completes the proof of assumption~$(f_1)$.

Moreover,
condition $(f_2)$ is trivially satisfied since $F(x,|t|) = F(x,t)$.
\end{proof}

 \begin{proof}[Proof of Theorem \ref{th2}]
By Theorem \ref{th1} and Proposition \ref{prop2}, there exists a non-negative solution to \eqref{mainlogistic}.

We first prove statement (i). Assume $m \leq 0$, and suppose by contradiction that there exists a non-negative and non-trivial solution~$u $ of \eqref{mainlogistic}. Since $\mu \geq \overline{\mu} > 0$ in $\Omega$ and $u \not\equiv 0$, we have
\[
\int_\Omega \mu(x) u^3\,dx > 0.
\]
Therefore, since~$u$ is a solution of~\eqref{mainlogistic},
\[
0 \leq \iint_{\cQ} (u(x) - u(y))^2 \, \mathcal{K}(x-y)\,dx\,dy = \int_\Omega m(x) u^2\,dx - \int_\Omega \mu(x) u^3\,dx <0,\]
which gives a contradiction. Thus, any solution must be trivial in this case, and the claim in~(i) is proved.

We now address (ii). We claim that the minimizer found in Theorem \ref{th1} cannot be trivial. To this end, 
we will construct a competitor with strictly negative energy.

To implement this strategy, we pick a function~$\phi\in C^\infty_0(\overline{\Omega}\cup\nN, [0,1])$ such that~$\phi=1$ in~$\Omega$. For~$\e\in(0,1)$, let also~$\phi_\e:=\e\phi$.
We point out that~$\phi_\varepsilon=\varepsilon$ in~$\Omega$ and~$\phi_\varepsilon=0$ in~$\D$, thanks to the assumptions
in~\eqref{domain}. Therefore,
$\phi_\varepsilon$ is a competitor for the minimization of~$\mathcal{E}$.

We also observe that~$ \|\phi_\e\|_{L^\infty(\R^N)}\le\e$
and, for all~$x$, $y\in\R^N$, $|\phi_\e(x)-\phi_\e(y)|\le\e$.
Therefore,
\begin{eqnarray*}
&& \iint_{\cQ} (\phi_\varepsilon(x) - \phi_\varepsilon(y))^2 \, \mathcal{K}(x-y)\,dx\,dy \\
&& \qquad =  2\iint_{\Omega\times \Omega^c} (\phi_\varepsilon(x) - \phi_\varepsilon(y))^2 \, \mathcal{K}(x-y)\,dx\,dy\\
&&\qquad \le 2
\varepsilon^2\iint_{\Omega\times\Omega^c}  \mathcal{K}(x-y)\,dx\,dy. 
\end{eqnarray*}

To simplify the notation, we define
\begin{eqnarray*}
c_1&:=&\iint_{\Omega\times\Omega^c}  \mathcal{K}(x-y)\,dx\,dy,\\
c_2 &:=& \int_\Omega m(x)\,dx \qquad \text{and} \qquad c_3 := \int_\Omega \mu(x)\,dx.
\end{eqnarray*}
With this notation, we find that
\begin{eqnarray*}
\cE(\phi_\varepsilon) &=&\frac{1}{4}\iint_{\cQ} \left(\phi_\e(x)-\phi_\e(y)\right)^2 \mathcal{K} (x-y) \,dx dy - \int_\Omega \frac{m(x)}{2} \phi_\e^2 - \frac{\mu(x)}{3} |\phi_\e|^3 \, dx\\
&\leq& \frac{\e^2 c_1}2
-\frac{\e^2 c_2}2 +\frac{\e^3 c_3}3 
\\&=&\frac12\left( c_1
-{c_2}\right)\e^2+\frac{\e^3}3 c_3.
\end{eqnarray*}

In light of~\eqref{eq38}, we have that~$  c_1-{c_2}<0$,
hence, for $\varepsilon$ sufficiently
small, it follows that~$\cE(\phi_\varepsilon) < 0$, as desired.\end{proof}

\section{Spectral analysis of two weighted eigenvalue problems} \label{SpecAna} \label{section4}

In this section, we focus on the study of two weighted eigenvalue problems with mixed Dirichlet–Neumann boundary conditions. More precisely, we are interested in determining the smallest~$\lambda \in \mathbb{R}$ for which the problems
\begin{equation}
\label{eigenproblem}
\tag{$P_\lambda$}
\begin{cases}  
\mathfrak{L} u = \lambda m(x) u & \text{in } \Omega, \\
\mathcal{N} u(x) = 0 & \text{on } \mathfrak{N}, \\
u = 0 & \text{on } \mathfrak{D},
\end{cases}
\end{equation}
and 
\begin{equation}
\label{eigenschro}
\tag{$\widetilde{P}_\lambda$}
\begin{cases}  
\mathfrak{L} u - m(x)u = \lambda u & \text{in } \Omega, \\
\mathcal{N} u(x) = 0 & \text{on } \mathfrak{N}, \\
u = 0 & \text{on } \mathfrak{D},
\end{cases}
\end{equation}
admit non-trivial solutions.

Here above and in what follows, we assume that~$m : \Omega \to \mathbb{R}$ is a measurable function satisfying suitable assumptions and the set~$\mathfrak{N}$ is bounded. 

Moreover, we focus on the specific case
\begin{equation*}
\mathcal{K}(z) := \frac{ \chi_{B_1}(z)}{|z|^{N}}.
\end{equation*}

To address problem~\eqref{eigenproblem}, we consider the minimization of the functional~$Q:\HOO(\Omega \cup \mathfrak{N})\to\R$ defined as
\begin{equation*}
Q(u) := \frac12\iint_{ \mathcal{Q} \cap\{ |x-y| \leq 1\}} \frac{(u(x)-u(y))^2}{|x-y|^N} \,dx \,dy
\end{equation*}
subject to the constraint
\[
\int_\Omega m(x) u^2 \, dx = 1.
\]

Similarly, to deal with problem~\eqref{eigenschro}, we consider the minimization of the functional~$W:\HOO(\Omega \cup \mathfrak{N})\to\R$ defined as
\begin{equation*}
W(u) := \frac12\iint_{ \mathcal{Q} \cap\{ |x-y| \leq 1\}} \frac{(u(x)-u(y))^2}{|x-y|^N} \,dx \,dy - \int_\Omega m(x) u^2 \, dx
\end{equation*}
subject to the constraint
\[
\int_\Omega u^2 \, dx = 1.
\]

We observe that any constrained critical point $u \in \HOO(\Omega \cup \mathfrak{N})$ of the functional $Q$ is a weak solution of \eqref{eigenproblem}, namely,
for all~$w \in \HOO(\Omega \cup \mathfrak{N})$,
\begin{equation*}
\frac12\iint_{\mathcal{Q} \cap\{ |x-y| \leq 1\}} \frac{(u(x) - u(y))(w(x) - w(y))}{|x-y|^N} \, dx \, dy 
= \lambda \int_\Omega m(x) u(x) w(x) \, dx.
\end{equation*}
  
Analogously, any constrained critical point $u \in \HOO(\Omega \cup \mathfrak{N})$ of the functional $W$ is a weak solution of \eqref{eigenschro}, that is, for all $w \in \HOO(\Omega \cup \mathfrak{N})$,
\begin{equation*}
\frac12\iint_{\mathcal{Q} \cap\{ |x-y| \leq 1\}} \frac{(u(x) - u(y))(w(x) - w(y))}{|x-y|^N} \, dx \, dy 
- \int_\Omega m(x) u(x) w(x) \, dx
= \lambda \int_\Omega u(x) w(x) \, dx
.\end{equation*}
  
Next result states that if we take a sequence bounded in $\HO(\Omega)$, then it is possible to redefine the sequence in the region $\nN$ through a normalization with the kernel, and this new sequence is bounded in $\HOO(\Omega \cup \nN)$ provided that the regions $\nN$ and $\Omega$ do not touch.

\begin{proposition} \label{boundedness}
Suppose that~$\nN$ is bounded and that there exists $d_0>0$ such that $\dist(\Omega,\mathfrak{N}) \geq d_0 > 0$. 

Let $(u_k)_k \subset \HOO(\Omega \cup \nN)$ be such that
    \begin{equation*}
      \sup_{k\in\N}  \int_\Omega u_k^2 \, dx 
      +\iint_{ \cQ \cap\{ |x - y| \leq 1\}} \frac{(u_k(x)-u_k(y))^2}{|x-y|^N}\,dx\,dy<+\infty\end{equation*}
      and let
      \begin{equation}\label{23456defvk00} v_k(x):=\begin{cases}
u_k(x) & \text{if } x \in \Omega \cup \D, \\ \displaystyle
\frac{\displaystyle\int_{\Omega\cap\{|x-z|\le1\}} \frac{u_k(z)}{|x-z|^N} \, dz}{\displaystyle\int_{\Omega\cap\{|x-z|\le1\}} \frac{ dz}{|x-z|^N}} & \text{if } x \in \nN.
\end{cases}
\end{equation}
 
Then, the sequence $(v_k)_k $ is uniformly
bounded in $\HOO(\Omega \cup \nN)$, namely
$$
\sup_{k\in\N}\int_\Omega {v}_k^2\,dx+
 \iint_{ { \R^{2N} \setminus (\D \times \D)}\atop{\{|x-y| \leq 1\}}} \frac{(v_k(x) - v_k(y))^2}{|x-y|^N} 
\, dx \, dy <+ \infty.
$$
\end{proposition}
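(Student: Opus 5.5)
The plan is to split the integral over $\big(\R^{2N}\setminus(\D\times\D)\big)\cap\{|x-y|\le1\}$ into interaction terms and bound each one separately. In the notation \eqref{notation}, restricted to $|x-y|\le 1$, the quantity to control is
\[
v_k(\Omega,\Omega)+2v_k(\Omega,\D)+2v_k(\Omega,\nN)+v_k(\nN,\nN)+2v_k(\nN,\D).
\]
The first two terms coincide with the corresponding terms for $u_k$, because $v_k=u_k$ on $\Omega\cup\D$. Since $\Omega\times\Omega$ and $\Omega\times\D$ lie in $\cQ$, these terms are bounded by hypothesis. The term $\int_\Omega v_k^2$ equals $\int_\Omega u_k^2$ and is bounded as well.

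For the term $v_k(\Omega,\nN)$ I will reuse the argument of Lemma~\ref{lemma1}, now with the truncated kernel $|z|^{-N}\chi_{B_1}(z)$. For each fixed $x\in\nN$, the value $v_k(x)$ is exactly the $t\in\R$ that minimizes
\[
t\mapsto\int_{\Omega\cap\{|x-z|\le1\}}\frac{(u_k(z)-t)^2}{|x-z|^N}\,dz .
\]
Hence $v_k(\Omega,\nN)\le u_k(\Omega,\nN)$, and the right-hand side is bounded because $\Omega\times\nN\subset\cQ$. At points where the denominator in \eqref{23456defvk00} vanishes, I set $v_k:=0$; there the $\Omega$–interaction is empty anyway.

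The remaining terms $v_k(\nN,\nN)$ and $v_k(\nN,\D)$ involve pairs outside $\cQ$, so the hypothesis gives no direct information on them. This is where $\dist(\Omega,\nN)\ge d_0$ is used. For $x\in\nN$ and $z\in\Omega$ the weight satisfies $|x-z|^{-N}\le d_0^{-N}$. Write $E_1(x)$ for the denominator in \eqref{23456defvk00}. Then
\[
|v_k(x)|\le \frac{d_0^{-N}\,\|u_k\|_{L^1(\Omega)}}{E_1(x)},
\]
and, comparing the averages at two points $x,y\in\nN$,
\[
|v_k(x)-v_k(y)|\lesssim \frac{d_0^{-N}\,\|u_k\|_{L^1(\Omega)}}{E_1(x)\,E_1(y)}\Big(|x-y|+|B_1(x)\,\triangle\, B_1(y)|\Big)\lesssim_{E_1}|x-y| .
\]
This estimate uses that $z\mapsto|x-z|^{-N}$ is Lipschitz on $\{|x-z|\ge d_0\}$, and that $|B_1(x)\triangle B_1(y)|\le C|x-y|$. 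The quantity $\|u_k\|_{L^1(\Omega)}$ is uniformly bounded, because $\Omega$ is bounded and $(u_k)_k$ is bounded in $L^2(\Omega)$. With the Lipschitz-type bound, $v_k(\nN,\nN)\lesssim\iint_{\nN\times\nN,\,|x-y|\le1}|x-y|^{2-N}$, which is finite since $\nN$ is bounded. For the mixed term, $v_k\in L^\infty(\nN)$ together with $\int_{\D\cap B_1(x)}|x-y|^{-N}dy\le C\big(1+|\log\dist(x,\partial\nN)|\big)$ gives
\[
v_k(\nN,\D)\lesssim\|v_k\|_{L^\infty(\nN)}^2\iint_{{\nN\times\nN^c}\atop{\{|x-y|\le1\}}}\frac{dx\,dy}{|x-y|^{N}} ,
\]
and the last integral is finite by Lemma~\ref{bfewERTYU3892gyf} with $c=0$ and $U=\nN$, since $\nN$ is bounded and smooth.

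The main obstacle is the lower bound on $E_1(x)=\int_{\Omega\cap B_1(x)}|x-z|^{-N}dz$. This quantity degenerates on those points of $\nN$ whose distance to $\Omega$ is close to $1$, so it is not bounded below on all of $\nN$. My first attempt is to avoid dividing by $E_1$ pointwise. In the $L^2$ and $L^\infty$ bounds I will use Jensen, which gives $v_k(x)^2\le E_1(x)^{-1}\int_\Omega u_k^2(z)\,|x-z|^{-N}\chi_{B_1}(x-z)\,dz$. For the difference quotient I will write $v_k(x)-v_k(y)$ as a difference of normalized averages, so that small values of $E_1$ are compensated by the small measure of $\Omega\cap B_1(x)$. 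If this fails, I will split $\nN$ into $\{E_1\ge\tau\}$, where the estimates above apply with constants depending on $\tau$, and a thin layer near $\{\dist(\cdot,\Omega)=1\}$. On that layer I will use the smoothness of $\partial\Omega$ to show that $E_1(x)$ is comparable to $|\Omega\cap B_1(x)|$, and that the averages still vary in a controlled way. This second step is the point I expect to require the most care.
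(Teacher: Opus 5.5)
Your decomposition, and your treatment of $v_k(\Omega,\Omega)$, $v_k(\Omega,\D)$ and $v_k(\Omega,\nN)$ via the pointwise minimality behind Lemma~\ref{lemma1}, are exactly the paper's. So is your plan for $v_k(\nN,\nN)$ and $v_k(\nN,\D)$: a uniform $L^\infty(\nN)$ bound, a Lipschitz-type bound for $|x-y|\le d_0/2$, and Lemma~\ref{bfewERTYU3892gyf} with $c=0$. The gap is the one you identify yourself and then leave open: all of this needs $\inf_{\nN}E_1>0$.

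The paper does not get around it either. In \eqref{cnmru3t25647SDFH765} it asserts $E_1(x)\ge|\Omega|$, but $|x-z|^{-N}\ge1$ on $B_1(x)$ only yields $E_1(x)\ge|\Omega\cap B_1(x)|$. This vanishes as soon as $\dist(x,\Omega)\ge1$, where $v_k$ is $0/0$. In fact the uniform bound \eqref{eq18}, on which your treatment of $v_k(\nN,\D)$ also relies, is false in general. Here is an example:
\begin{itemize}
\item Take $N=1$, $\Omega=(0,1)$, $\nN=(\frac65,3)$, $\D=(-\infty,0)\cup(1,\frac65)\cup(3,+\infty)$, so $d_0=\frac15$.
\item For $x=2-h$ with $0<h<\frac45$ one has $\Omega\cap B_1(x)=(1-h,1)$, while $E_1=0$ on $[2,3)$.
\item Let $u_k:=a_k\chi_{(1-\e_k,1)}$ in $\Omega$, $u_k:=0$ in $\D$, and $u_k:=v_k$ in $\nN$ (with $v_k:=0$ on $[2,3)$), where $\e_k\to0$ and $a_k^2\e_k\log\frac1{\e_k}=1$.
\item Then $\int_\Omega u_k^2\to0$, $u_k(\Omega,\Omega)+u_k(\Omega,\D)=O(1)$, and $u_k(\Omega,\nN)\le\log 5\int_\Omega u_k^2$, so the hypotheses hold uniformly.
\item Yet $v_k\equiv a_k\to+\infty$ on $(2-\e_k,2)$.
\end{itemize}

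Your fallback plans do not repair this as stated. Set $h:=1-\dist(x,\Omega)$. Whenever $\nN$ reaches $\{h=0\}$, the interaction of $\{0<h<h_0\}$ with $\{E_1=0\}$ (where you put $v_k=0$), or with $\D$ if it is adjacent there, produces a term of size $\int v_k^2\log\frac1h\,dx$.
\begin{itemize}
\item Estimating $v_k^2$ by Jensen and applying Fubini leads to $\int_\Omega u_k^2\log^2\frac1{\dist(z,\partial\Omega)}\,dz$. In the example this is $\approx\log\frac1{\e_k}\to\infty$, whereas the hypotheses only provide $\int_\Omega u_k^2\log\frac1{\dist(z,\partial\Omega)}\,dz\lesssim u_k(\Omega,\D)+\int_\Omega u_k^2$.
\item Comparing $E_1(x)$ with $|\Omega\cap B_1(x)|$ does not help against concentration of $u_k$ in these small sets.
\end{itemize}

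What is missing is an averaged, Hardy-type estimate in place of pointwise bounds. In the example, $\int_0^{h_0}v_k(2-h)^2\log\frac1h\,dh$ is bounded via the weighted Hardy inequality $\int_0^{h_0}\big(\frac1h\int_0^hf\big)^2\log\frac1h\,dh\le C\int_0^{h_0}f^2\log\frac1t\,dt$; Muckenhoupt's condition holds for this pair of weights. The real work is the analogue for the thin lenses $\Omega\cap B_1(x)$ in dimension $N\ge2$, together with control of the oscillation of $v_k$ near $\{h=0\}$.

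Alternatively, add the assumption $\sup_{x\in\nN}\dist(x,\Omega)<1$. Then $E_1\ge c>0$ on $\nN$ by smoothness of $\partial\Omega$, and your argument closes after one correction. The symmetric-difference contribution $\int_{\Omega\cap(B_1(x)\triangle B_1(y))}|u_k(z)|\,|x-z|^{-N}\,dz$ is bounded by $d_0^{-N}\|u_k\|_{L^2(\Omega)}|B_1(x)\triangle B_1(y)|^{1/2}\lesssim|x-y|^{1/2}$, not by $\|u_k\|_{L^1(\Omega)}|x-y|$; the paper drops this term altogether. The resulting H\"older-$\frac12$ bound still suffices, since $\iint_{\nN\times\nN}|x-y|^{1-N}\,dx\,dy<+\infty$.
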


\begin{proof} 
To simplify the notation, we suppose that
$$ \sup_{k\in\N}  \left(\int_\Omega u_k^2 \, dx\right)^{1/2} \le K_0.$$
Then, by the definition of~$v_k$, we have that
\begin{equation}\label{87654hdjsfgvwejhSDFGH876500} \sup_{k\in\N}\int_\Omega {v}_k^2\,dx
=\sup_{k\in\N}\int_\Omega {u}_k^2\,dx\le K_0^2.\end{equation}
In addition, from H\"older's inequality, it follows that
\begin{equation} \label{eq16}
    \left|\int_\Omega v_k \, dx \right| \leq |\Omega|^{\frac{1}{2}} \left( \int_\Omega v_k^2 \right)^{\frac{1}{2}} \leq |\Omega|^{\frac{1}{2}}K_0.
\end{equation}

Moreover, for all $x \in \nN$,
\begin{equation}\label{cnmru3t25647SDFH765}
    \int_{\Omega \cap\{ |x-z| \leq 1\}} \frac{dz}{|x-z|^N} \geq |\Omega|.
\end{equation}
Combining this and~\eqref{eq16}, and recalling that $\dist(\Omega,\nN) \geq d_0$, we deduce that, for all~$x \in \nN$,
\begin{equation*} 
    v_k(x) = \frac{ \displaystyle\int_{\Omega \cap\{ |x-z| \leq 1\}} \frac{u_k(z)}{|x-z|^N} \, dz}{ \displaystyle \int_{ \Omega \cap\{ |x-z| \leq 1\}} \frac{dz}{|x-z|^N} } \leq \frac{\displaystyle \int_\Omega u_k \, dz}{\displaystyle |\Omega| d_0^N}
    =\frac{\displaystyle \int_\Omega v_k \, dz}{\displaystyle |\Omega| d_0^N} \leq \frac{K_0}{\displaystyle |\Omega|^{\frac{1}{2}} d_0^N} .
\end{equation*}
As a consequence, $ v_k \in L^\infty(\nN)$ and 
\begin{equation} \label{eq18}
\sup_{k\in\N}\Vert v_k \Vert_{L^\infty (\nN)} \leq \frac{K_0}{ |\Omega|^{\frac{1}{2}} d_0^N }\end{equation}

We remark that this also entails that
the sequence~$(v_k)_k$ is uniformly
bounded in $L^2( \nN)$, and thus in $L^2(\Omega \cup \nN)$.

We now claim that 
\begin{equation}\label{87654hdjsfgvwejhSDFGH87650}
  \sup_{k\in\N}  \iint_{ { \R^{2N} \setminus (\D \times \D)}\atop{\{|x-y| \leq 1\}}} \frac{(v_k(x) - v_k(y))^2}{|x-y|^N} \, dx \, dy<+\infty.
\end{equation}
To prove it, recalling the notation introduced in \eqref{notation}, we observe that  \begin{equation}\label{87654hdjsfgvwejhSDFGH8765}\begin{split}
&  \iint_{ { \R^{2N} \setminus (\D \times \D)}\atop{\{|x-y| \leq 1\}}} 
\frac{(v_k(x) - v_k(y))^2}{|x-y|^N} \, dx \, dy 
= v_k (\Omega \cup \nN,\Omega \cup \nN) +  2\,  v_k (\Omega \cup \nN, \D) \\
& \quad \quad = v_k(\Omega, \Omega) + 2 v_k(\Omega, \nN) + v_k(\nN, \nN) + 2 v_k(\Omega, \D) + 2v_k(\nN, \D) \\
& \quad \quad = v_k(\Omega, \Omega) + 2 v_k(\Omega, \D \cup \nN) + v_k(\nN, \nN) + 2 v_k(\nN, \D) \\
& \quad \quad = \iint_{ { \R^{2N} \setminus (
(\D \cup \nN) \times (\D \cup \nN))}\atop{\{ |x-y| \leq 1\}}} 
\frac{(v_k(x) - v_k(y))^2}{|x-y|^N} \, dx \, dy + v_k(\nN, \nN) + 2 v_k(\nN, \D) \\
& \quad  \quad = \iint_{\cQ\cap\{ |x-y| \leq 1\}} \frac{(v_k(x) - v_k(y))^2}{ |x-y|^N} \, dx \, dy + v_k(\nN, \nN) + 2 v_k (\nN, \D).
\end{split}\end{equation}

Now, from Lemma \ref{lemma1} we have that 
\begin{equation}\label{87654hdjsfgvwejhSDFGH87652}
   \iint_{\cQ\cap\{ |x-y| \leq 1\}} \frac{(v_k(x) - v_k(y))^2}{ |x-y|^N} \, dx \, dy \leq \iint_{ \cQ
   \cap\{ |x-y| \leq 1\}} \frac{(u_k(x) - u_k(y))^2}{|x-y|^N} \, dx \, dy<+\infty.
\end{equation}

Moreover, for any~$z \in \Omega$, we consider
the function $h_z(x) := |x - z|^{-N}$ and we see that, if~$x$, $y\in\nN$ with~$|x-y|\le d_0/2$, then, for all~$t\in(0,1)$,
$$ |y+t(x-y)-z|\ge |y-z|-|x-y|\ge d_0-\frac{d_0}2=\frac{d_0}2,$$
and therefore 
\begin{equation}\label{cds47387fiwhfkwSDFGH}\begin{split}
|h_z(x) - h_z(y)|&\le \int_0^1 |\nabla h_z(y+t(x-y)|\,|x-y|\,dt\\&= N \int_0^1 \frac{|x-y|}{|y+t(x-y)-z|^{N+1}}\,dt\\
&\le\frac{2^{N+1}N}{d_0^{N+1}}|x-y|.
\end{split}\end{equation}

We recall that, for all~$x$, $y\in\nN$,
\begin{eqnarray*}
    |v_k(x) - v_k(y)| & =
&    \left|\frac{\displaystyle\int_{\Omega\cap\{|x-z|\le1\}} u_k(z)h_z(x) \, dz}{\displaystyle\int_{\Omega\cap\{|x-z|\le1\}} h_z(x)\,dz } 
    -\frac{\displaystyle\int_{\Omega\cap\{|y-z|\le1\}} u_k(z)h_z(y) \, dz}{\displaystyle\int_{\Omega\cap\{|y-z|\le1\}} h_z(y)\, dz}\right|  .
\end{eqnarray*}
Thus, exploiting~\eqref{cnmru3t25647SDFH765} and~\eqref{cds47387fiwhfkwSDFGH}, we find that,
for all~$x$, $y\in\nN$ with~$|x-y|\le d_0/2$,
\begin{eqnarray*}
&& |\Omega|^2   |v_k(x) - v_k(y)| \\& \le
& 
\Bigg|
\left(\int_{\Omega\cap\{|x-z|\le1\}} u_k(z)h_z(x) \, dz\right)\left(\int_{\Omega\cap\{|y-z|\le1\}} h_z(y)\, dz \right)
 \\&&\qquad   -\left(\int_{\Omega\cap\{|y-z|\le1\}} u_k(z)h_z(y) \, dz\right)\left(\int_{\Omega\cap\{|x-z|\le1\}} h_z(x)\,dz\right)\Bigg| \\&\le&
\left(\int_{\Omega\cap\{|y-z|\le1\}} h_z(y)\, dz \right)
\left(\int_{\Omega}| u_k(z)|\,|h_z(x)-h_z(y)| \, dz\right)
 \\&&\qquad   +
 \left(\int_{\Omega\cap\{|y-z|\le1\}} |u_k(z)|\,h_z(y) \, dz\right)\left(\int_{\Omega} |h_z(x)-h_z(y)|\,dz\right) 
\\&\le&\frac{2^{N+1}N}{d_0^{N+1}}|x-y|
\left(\int_{\Omega\cap\{|y-z|\le1\}} h_z(y)\, dz \right)
\left(\int_{\Omega}| u_k(z)|\, dz\right)
 \\&&\qquad   +\frac{2^{N+1}N|\Omega|}{d_0^{N+1}}|x-y|
 \left(\int_{\Omega\cap\{|y-z|\le1\}} |u_k(z)|\,h_z(y) \, dz\right)\\
 &\le&\frac{2^{N+2}N|\Omega|}{d_0^{2N+1}}|x-y|
\left(\int_{\Omega}| u_k(z)|\, dz\right)
.
\end{eqnarray*}

Accordingly, since the sequence~$(u_k)_k$
is uniformly bounded in~$L^2(\Omega)$, and therefore in~$L^1(\Omega)$, we conclude that,
for all~$x$, $y\in\nN$ with~$|x-y|\le d_0/2$,
$$   |v_k(x) - v_k(y)| \le C|x-y|,$$
for some~$C>0$ independent of~$k$.

As a result,
\begin{eqnarray*}&&
\int_{{\nN\times\nN}\atop{\{|x-y|\le d_0/2\}}}\frac{ |v_k(x) - v_k(y)| ^2}{|x-y|^N}\,dx\,dy\le C
\int_{{\nN\times\nN}\atop{\{|x-y|\le d_0/2\}}}|x-y|^{2-N}\,dx\,dy
\end{eqnarray*}
whcih is finite, since~$\nN$ is a bounded set.

Moreover, exploiting~\eqref{eq18}, we have that
\begin{eqnarray*}&&
\int_{{\nN\times\nN}\atop{\{d_0/2<|x-y|\le 1\}}}\frac{ |v_k(x) - v_k(y)| ^2}{|x-y|^N}\,dx\,dy\le C
\int_{{\nN\times\nN}\atop{\{|x-y|\le d_0/2\}}}\frac{dx\,dy}{|x-y|^{N}}
\end{eqnarray*}
which is finite.

The last two displays give that
\begin{equation}\label{87654hdjsfgvwejhSDFGH87653}  v_k(\nN, \nN)<+\infty.\end{equation}

Furthermore, we deduce from~\eqref{eq18} 
and Lemma~\ref{bfewERTYU3892gyf} (used here with~$U:=\nN$ and~$c:=0$)
that
\begin{equation}\label{87654hdjsfgvwejhSDFGH87654}
v_k (\nN, \D)= \iint_{{\nN \times \D}\atop{\{ |x-y| \leq 1\}}} \frac{(v_k(x))^2}{ |x-y|^N} \, dx \, dy \leq \left(\frac{K_0}{ |\Omega|^{\frac{1}{2}} d_0^N} \right)^{2} \iint_{
{\nN \times \D}\atop{\{ |x-y| \leq 1\}}} \frac{dx\,dy}{ |x-y|^N} 
<+\infty.  
\end{equation}

Combining~\eqref{87654hdjsfgvwejhSDFGH8765}, \eqref{87654hdjsfgvwejhSDFGH87652}, \eqref{87654hdjsfgvwejhSDFGH87653}
and~\eqref{87654hdjsfgvwejhSDFGH87654},
we obtain the claim in~\eqref{87654hdjsfgvwejhSDFGH87650},
which, together with the estimate~\eqref{87654hdjsfgvwejhSDFGH876500},
completes the proof.
\end{proof}

We now establish a general Poincar\'e-type inequality that will be used throughout the remaining part of this section.

\begin{proposition} \label{poincare}Suppose that $\mathfrak{N}$ is bounded and that $\dist(\Omega,\mathfrak{N}) \geq d_0 > 0$. Then there exists a constant $C > 0$ such that
\[
\int_\Omega u^2 \, dx \;\leq\; C \iint_{\mathcal{Q} \cap\{ |x - y| \leq 1\}} 
\frac{(u(x) - u(y))^2}{|x - y|^N} \, dx \, dy,
\]
for every $u \in \HOO(\Omega \cup \mathfrak{N})$.
\end{proposition}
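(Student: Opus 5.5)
Argue by contradiction, using the compactness of the embedding $\HOO(\Omega\cup\nN)\hookrightarrow L^2(\Omega\cup\nN)$ (Remark~\ref{compactness}, for bounded $\Omega\cup\nN$) together with Proposition~\ref{boundedness}. Write $[u]^2$ for the right-hand side double integral over $\cQ\cap\{|x-y|\le1\}$. If the inequality fails, there is $(u_k)_k\subset\HOO(\Omega\cup\nN)$ with $\int_\Omega u_k^2\,dx=1$ and $[u_k]\to0$.

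\textbf{Step 1: replace $u_k$ on $\nN$ by its kernel average.} Let $v_k$ be as in~\eqref{23456defvk00}. Lemma~\ref{lemma1} (with the truncated kernel $\chi_{B_1}|z|^{-N}$) gives $[v_k]\le[u_k]\to0$, and $v_k=u_k$ on $\Omega$. Proposition~\ref{boundedness} makes $(v_k)_k$ bounded in $\HOO(\Omega\cup\nN)$; this is where $\nN$ bounded and $\dist(\Omega,\nN)\ge d_0$ enter. Since $\Omega\cup\nN$ is bounded, compactness gives, up to a subsequence, $v_k\to v$ in $L^2(\Omega\cup\nN)$ and a.e., with $v=0$ in $\D$, and $\int_\Omega v^2=1$.

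\textbf{Step 2: identify the limit.} By Fatou's lemma, $[v]\le\liminf[v_k]=0$. Hence $v(x)=v(y)$ for a.e. $(x,y)\in\cQ$ with $|x-y|\le1$. Since $\Omega$ is a connected domain, the $\Omega\times\Omega$ part forces $v$ to be an a.e. constant $c$ on $\Omega$; a chain-of-balls argument with radius $<1/2$ suffices. The normalization gives $c^2|\Omega|=1$, so $c\neq0$.

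\textbf{Step 3: contradiction.} If some set of positive measure in $\D$ lies within distance $1$ of $\Omega$, then the $\Omega\times\D$ part of $[v]=0$ gives $c=v=0$ there, a contradiction. Otherwise I use the $\Omega\times\nN$ part: it gives $v=c$ on the points of $\nN$ within distance $1$ of $\Omega$. Then the $\nN\times\D$ part shows that $v=0$ on the points of $\D$ within distance $1$ of this region. Iterating through unit-distance neighbours, and using that $\nN$ is bounded while $\D\supset\R^N\setminus(\overline{\Omega\cup\nN})$ is nonempty and unbounded, I eventually reach a positive-measure subset of $\D$ connected to $\Omega$ by a chain of steps of length $\le1$. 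This forces $c=0$, contradicting $c^2|\Omega|=1$.

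\textbf{Main obstacle.} The delicate point is the connectivity argument in Steps 2--3, in particular making "a.e. equal on pairs at distance $\le1$" propagate to a genuine constant and then reach $\D$. I would handle it by covering $\overline{\Omega\cup\nN}$ with finitely many balls of radius $1/4$ and using that $v$ is a.e. constant on each ball intersected with the set, and equal across overlapping balls. If chaining through $\nN$ becomes messy, an alternative is to note that $\dist(\Omega,\nN)\ge d_0$ makes every point of $\partial\Omega$ adjacent to $\D$. Then $\{y\in\D:\dist(y,\Omega)<1\}$ already has positive measure, which finishes Step 3 directly.
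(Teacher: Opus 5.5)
Your overall route is the paper's: argue by contradiction, replace $u_k$ on $\nN$ by the kernel average (Lemma~\ref{lemma1} and Proposition~\ref{boundedness}), use compactness, and obtain a limit with zero seminorm that is a nonzero constant on $\Omega$. Steps 1--2 are fine, and Step 2 is more careful than the paper.

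The main line of Step 3, however, is invalid. The seminorm $[\,\cdot\,]$ is taken over $\cQ=\R^{2N}\setminus(\Omega^c\times\Omega^c)$, which contains no $\nN\times\nN$ pairs and no $\nN\times\D$ pairs. These are exactly the extra terms $v_k(\nN,\nN)+2v_k(\nN,\D)$ that separate the $\cQ$-seminorm from the $\widetilde{\cQ}$-seminorm in \eqref{87654hdjsfgvwejhSDFGH8765}. So $[v]=0$ gives no information on pairs that do not touch $\Omega$:
\begin{itemize}
\item $v$ need not be locally constant on $\nN$;
\item the ``$\nN\times\D$ part'' you invoke is simply absent;
\item the covering of $\overline{\Omega\cup\nN}$ by balls of radius $1/4$, with $v$ constant on each, fails for the same reason.
\end{itemize}
Chaining through $\nN$ is therefore not merely messy; it cannot work.

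Your fallback is the correct argument. It also shows that the ``otherwise'' case never occurs, so it should simply be Step 3.
\begin{itemize}
\item Set $r:=\min\{d_0,1\}$ and $S:=\{y\in\R^N:0<\dist(y,\Omega)<r\}$. This set is open and nonempty, since $\Omega$ is bounded.
\item $S$ misses $\overline{\Omega}$. It also misses $\overline{\nN}$, because every point of $\overline{\nN}$ is at distance at least $d_0$ from $\Omega$.
\item Since $\R^N\setminus\Omega=\overline{\nN}\cup\overline{\D}$ and $\partial\D$ is null, you get $|S\cap\D|=|S|>0$.
\item Each $y\in S$ sees a small ball of $\Omega$ inside $B_1(y)$. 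Hence the pairs $(x,y)\in\Omega\times(S\cap\D)$ with $|x-y|\le1$ form a set of positive measure inside $\cQ\cap\{|x-y|\le1\}$.
\item On that set $v(x)=c$ and $v(y)=0$, so $[v]=0$ forces $c=0$, contradicting $c^2|\Omega|=1$.
\end{itemize}
This adjacency of $\D$ to $\Omega$, guaranteed by $\dist(\Omega,\nN)\ge d_0$, is exactly what the paper's one-line conclusion (``$u$ is a.e.\ constant, hence zero since $u=0$ in $\D$'') uses without saying so.
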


\begin{proof}
Suppose by contradiction that the claim is false. Then there exists a sequence $(u_k)_k\subset\HOO(\Omega\cup\nN)$ such that
\[
\int_\Omega u_k^2\,dx = 1
\qquad\text{and}\qquad
\iint_{ \cQ \cap\{ |x - y| \leq 1\}} \frac{(u_k(x)-u_k(y))^2}{|x-y|^N}\,dx\,dy \le \frac{1}{k}.
\]

By Proposition \ref{boundedness}, 
up to replacing~$u_k$ with~$v_k$ given in~\eqref{23456defvk00}, we may assume that $(u_k)_k$ is
uniformly bounded in $\HOO(\Omega\cup\nN)$, namely
$$
\sup_{k\in\N}\int_\Omega {u}_k^2\,dx+
 \iint_{ { \R^{2N} \setminus (\D \times \D)}\atop{\{|x-y| \leq 1\}}} \frac{(u_k(x) - u_k(y))^2}{|x-y|^N} 
\, dx \, dy <+ \infty.
$$
Hence, there exists~$u\in\HOO(\Omega\cup\nN)$ such that,
up to a subsequence, 
\[
u_k \rightharpoonup u \quad\text{weakly in }\HOO(\Omega\cup\nN)
\qquad{\mbox{and}}\qquad
u_k \to u \quad\text{strongly in }L^2(\Omega)
\]
(see \cite[Theorem 2.1]{MR3759570}).

These convergences imply that
\begin{equation}\label{cnmrui342345ASDFGH000}
\int_\Omega u^2\,dx = 1
\end{equation} and
\begin{equation}\label{cnmrui342345ASDFGH0002}
\iint_{ \cQ \cap\{ |x - y| \leq 1\}} \frac{(u(x)-u(y))^2}{|x-y|^N}\,dx\,dy = 0.
\end{equation}

From~\eqref{cnmrui342345ASDFGH0002} we deduce that~$u$ is a.e. constant, and therefore~$u$ must be identically zero, since we know that~$u=0$ in~$\D$.
But this fact contradicts~\eqref{cnmrui342345ASDFGH000}.
\end{proof}

Now we are ready to start our analysis of problem \eqref{eigenproblem}, and we aim to show that the functional $Q$ admits a global minimizer. In general we do not know if minimizing sequences are bounded, but the next result shows that it is always possible to modify the sequence appropriately in $\nN$ without increasing the energy of the functional in such a way that the sequence is bounded in the ``right'' space.

\begin{lemma} \label{lemma4}
Suppose that $\mathfrak{N}$ is bounded and that $\dist(\Omega,\mathfrak{N}) \geq d_0 > 0$.

Let $(u_k)_k \subset \HOO(\Omega \cup \nN)$ be a minimizing sequence for $Q$ such that
\[
\int_\Omega m(x) u_k^2 \, dx = 1.
\]
Using the notation from Lemma \ref{lemma1}, define
\begin{equation*}
v_k(x) :=
\begin{cases}
u_k(x), & \text{if } x \in \Omega \cup \D, \\[4pt]
\dfrac{E_{u_k}(x)}{E_1(x)}, & \text{if } x \in \nN .
\end{cases}
\end{equation*}

Then~$(v_k)_k$ is a minimizing sequence for $Q$
and is uniformly
bounded in $\HOO(\Omega \cup \nN)$, namely
$$
\sup_{k\in\N}\int_\Omega {v}_k^2\,dx+
 \iint_{ { \R^{2N} \setminus (\D \times \D)}\atop{\{|x-y| \leq 1\}}} \frac{(v_k(x) - v_k(y))^2}{|x-y|^N} 
\, dx \, dy <+ \infty.
$$
\end{lemma}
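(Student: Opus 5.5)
The plan is to check two things: that the modified sequence $(v_k)_k$ is still minimizing, and that it satisfies the hypotheses of Proposition~\ref{boundedness}, whose conclusion is exactly the required uniform bound.

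\emph{Admissibility and minimality.} Since $v_k=u_k$ in $\Omega\cup\D$, we have $v_k=0$ in $\D$ and $\int_\Omega m v_k^2\,dx=\int_\Omega m u_k^2\,dx=1$. So each $v_k$ satisfies the constraint, once we know $v_k\in\HOO(\Omega\cup\nN)$; this membership follows from the estimates below. Lemma~\ref{lemma1}, applied with the kernel $\chi_{B_1}(z)|z|^{-N}$ (the proof there is purely algebraic and works for any nonnegative symmetric kernel), gives $Q(v_k)\le Q(u_k)$. Since $Q(v_k)$ is also bounded below by $\inf Q$ over the constraint set, $(v_k)_k$ is minimizing.

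\emph{Uniform bound.} To apply Proposition~\ref{boundedness}, I need $\sup_k\int_\Omega u_k^2\,dx<+\infty$ and a uniform bound on the seminorm over $\cQ\cap\{|x-y|\le1\}$. The seminorm bound is immediate, because that seminorm equals $2Q(u_k)$ and $Q(u_k)$ converges to the infimum, so it is bounded. The $L^2(\Omega)$ bound comes from Proposition~\ref{poincare}:
\[
\int_\Omega u_k^2\,dx\le C\iint_{\cQ\cap\{|x-y|\le1\}}\frac{(u_k(x)-u_k(y))^2}{|x-y|^N}\,dx\,dy=2C\,Q(u_k),
\]
which is uniformly bounded. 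With both hypotheses in hand, Proposition~\ref{boundedness} applies. Its sequence in~\eqref{23456defvk00} is exactly $v_k$, because $E_{u_k}/E_1$ for this kernel is the truncated weighted average appearing there. Proposition~\ref{boundedness} then yields the uniform bound on $\int_\Omega v_k^2\,dx$ and on the seminorm over $\R^{2N}\setminus(\D\times\D)$. Together with \eqref{eq18}, which gives $L^\infty(\nN)$ and hence $L^2(\nN)$ control, this also shows $v_k\in\HOO(\Omega\cup\nN)$.

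\emph{Main obstacle.} The one point needing care is that Proposition~\ref{poincare} is applied to $u_k$ itself, so we must know $u_k\in\HOO(\Omega\cup\nN)$. This holds by assumption. A second subtlety is that $m$ may change sign, so the constraint alone does not control $\|u_k\|_{L^2(\Omega)}$; this is why the Poincar\'e inequality is needed rather than the constraint.
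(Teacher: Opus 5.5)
Your proof is correct and follows the paper's argument: Lemma~\ref{lemma1} gives minimality, Proposition~\ref{poincare} gives the uniform $L^2(\Omega)$ bound, and Proposition~\ref{boundedness} then yields the uniform bound in $\HOO(\Omega\cup\nN)$. The only difference is that you apply the Poincar\'e inequality to $u_k$, which is already known to lie in $\HOO(\Omega\cup\nN)$, whereas the paper applies it to $v_k$; you also check the admissibility of $v_k$ explicitly, which the paper leaves implicit. This is harmless and slightly cleaner, and since $v_k=u_k$ in $\Omega$ both routes give the same bound.
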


\begin{proof}
Thanks to Lemma \ref{lemma1}, the sequence $(v_k)_k$ is still minimizing for $Q$, and therefore
\begin{equation*}
  \sup_{k\in\N}  \iint_{\mathcal{Q} \cap\{ |x - y| \leq 1\}} 
\frac{(v_k(x) - v_k(y))^2}{|x - y|^N} \, dx \, dy<+\infty.
\end{equation*}
From this and
Proposition \ref{poincare}, we see that the sequence $(v_k)_k$ is
uniformly bounded in $L^2(\Omega)$. The thesis then 
follows from Proposition~\ref{boundedness}. 
\end{proof}

Now we are ready to show that the constrained minimum for $Q$ is attained.

\begin{proposition} \label{prop3}
Suppose that $\mathfrak{N}$ is bounded and that $\dist(\Omega,\mathfrak{N}) \geq d_0 > 0$.
 
Let $m \in L^\infty(\Omega)$.
Assume that there exists an open set $\mathcal{S} \subset \Omega$ such that~$m>0$ a.e. in~$\mathcal{S}$.

Then the infimum in the minimization problem
\begin{equation*}
    \lambda_1(m) := \inf \left\{ Q(u) \,\colon\, u \in \HOO(\Omega \cup \nN), \ \int_\Omega m(x) u^2(x) \, dx = 1 \right\}
\end{equation*}
is achieved by some non-negative function $e \in \HOO(\Omega \cup \nN)$, which is also a weak solution of problem \eqref{eigenproblem}.
\end{proposition}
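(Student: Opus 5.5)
We use the direct method of the calculus of variations, with the compactness difficulty handled by Lemma~\ref{lemma4}.

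\emph{Step 1: the admissible set is non-empty.} Since $m>0$ a.e. in the open set $\mathcal{S}$, we pick $\psi\in C^\infty_c(\mathcal{S})$ with $\psi\not\equiv0$. Then $\psi$ belongs to $\HOO(\Omega\cup\nN)$, because it is smooth with compact support in $\Omega$ and the kernel is truncated at distance~$1$. Moreover $\int_\Omega m\psi^2\,dx>0$. After normalization $\psi$ is admissible, so $\lambda_1(m)<+\infty$. Also $Q\ge0$, hence $\lambda_1(m)\ge0$ is a finite infimum.

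\emph{Step 2: choose a good minimizing sequence.} We take a minimizing sequence $(u_k)_k$ and replace it by $(|u_k|)_k$. This does not increase $Q$, by the reverse triangle inequality, and it keeps the constraint. Next we replace it by the sequence $(v_k)_k$ of Lemma~\ref{lemma4}. The Neumann renormalization leaves the values in $\Omega$ unchanged, so the constraint is preserved, and it preserves non-negativity, since $E_{u_k}/E_1\ge0$. By Lemma~\ref{lemma4}, $(v_k)_k$ is still minimizing and is uniformly bounded in $\HOO(\Omega\cup\nN)$. By \cite[Theorem 2.1]{MR3759570}, as in the proof of Proposition~\ref{poincare}, up to a subsequence we have $v_k\rightharpoonup e$ weakly in $\HOO(\Omega\cup\nN)$, $v_k\to e$ in $L^2(\Omega)$, and $v_k\to e$ a.e. in $\Omega$. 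Extracting further, we may also assume a.e. convergence on $\Omega\cup\nN$; in any case, weak lower semicontinuity below does not need it. Since $m\in L^\infty(\Omega)$, strong $L^2(\Omega)$ convergence gives
\[
\int_\Omega m\,e^2\,dx=\lim_{k\to+\infty}\int_\Omega m\,v_k^2\,dx=1 .
\]
So $e$ is admissible, and $e\ge0$ as an a.e. limit of non-negative functions. The quadratic form $Q$ is convex and continuous on $\HOO(\Omega\cup\nN)$, since it is bounded by the norm, so it is weakly lower semicontinuous. Alternatively, one can apply Fatou's lemma on $\mathcal{Q}\cap\{|x-y|\le1\}$ along an a.e. convergent subsequence. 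Either way $Q(e)\le\liminf_k Q(v_k)=\lambda_1(m)$, and therefore $e$ is a minimizer.

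\emph{Step 3: Euler--Lagrange equation.} For $w\in\HOO(\Omega\cup\nN)$ we consider $G(u):=\int_\Omega m u^2\,dx$. Its derivative at $e$ is $2\int_\Omega m e w\,dx$, which is non-zero for $w=e$ because $G(e)=1$. Hence the constraint is a regular $C^1$ manifold and the Lagrange multiplier rule applies: there is $\lambda$ such that, for all $w$,
\[
\frac12\iint_{\mathcal{Q}\cap\{|x-y|\le1\}}\frac{(e(x)-e(y))(w(x)-w(y))}{|x-y|^N}\,dx\,dy=\lambda\int_\Omega m\,e\,w\,dx .
\]
Equivalently, one can differentiate $t\mapsto Q(e+tw)/G(e+tw)$ at $t=0$. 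Testing with $w=e$ gives $2Q(e)=\lambda$ up to the normalization of $Q$. Thus the multiplier equals $\lambda_1(m)$ after matching the factor conventions, with the constant absorbed as in the weak formulation stated before Proposition~\ref{boundedness}. This shows that $e$ is a weak solution of \eqref{eigenproblem}.

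\emph{Main obstacle.} The delicate step is the boundedness and compactness in Step~2. It is already provided by Lemma~\ref{lemma4}, which combines Proposition~\ref{poincare} and Proposition~\ref{boundedness}. What remains to check carefully is that the modifications, taking absolute values and then renormalizing on $\nN$, preserve both the constraint and the minimizing property.
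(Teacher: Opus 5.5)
Your proof is correct and follows the same route as the paper. Both arguments show the admissible set is non-empty with a bump function in $\mathcal{S}$, pass to $|u_k|$ and then to the renormalized sequence of Lemma~\ref{lemma4}, use the compact embedding to keep the constraint, and conclude by weak lower semicontinuity of $Q$. The only difference is at the end: you obtain the weak formulation through a Lagrange multiplier argument, and testing with $w=e$ gives exactly $Q(e)=\lambda$, so $\lambda=\lambda_1(m)$ with no factor to adjust. The paper instead relies on its earlier remark that constrained critical points are weak solutions, and checks $\cN e=0$ on $\nN$ separately via the pointwise limit of the renormalized sequence and Lemma~\ref{lemma1}; your weak formulation, tested with functions supported in $\nN$, already contains that Neumann condition.
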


\begin{proof}
Let~$\widetilde\varphi\in C^\infty_0(\mathcal{S},[0,1])$ 
and
$$ \varphi:=\frac{\widetilde\varphi}{\displaystyle\int_\Omega m(x) \widetilde\varphi^2(x) \, dx}.$$
We remark that~$\varphi\in\HOO(\Omega \cup \nN)$ and
$$ \int_\Omega m(x) \varphi^2(x) \, dx=1,$$
hence the set in which we perform the minimization is not empty.

Let now~$(u_k)_k \subset \HOO(\Omega \cup \nN)$ be a minimizing sequence. Thanks to the triangle inequality, we may assume without loss of generality that the sequence is non-negative. Moreover, if we define the  renormalized sequence 
    \begin{equation*}
        v_k(x) := 
        \begin{cases} 
            u_k(x), & \text{if } x \in \Omega \cup \D, \\ 
            \dfrac{E_{u_k}(x)}{E_1(x)}, & \text{if } x \in \nN,
        \end{cases}
    \end{equation*}
    we have, in view of Lemma \ref{lemma4},  that $(v_k)_k$ is still a minimizing sequence for $\lambda_1(m)$ and is uniformly
    bounded in $\HOO(\Omega \cup \nN)$. 
    
    Hence, by  the compact embedding $\HOO(\Omega \cup \nN) \hookrightarrow L^2(\Omega \cup \nN)$ (see \cite[Theorem 2.1]{MR3759570}), there exists $v \in \HOO(\Omega \cup \nN)$ such that, up to a subsequence, $v_k \to v$ strongly in $L^2(\Omega \cup \nN)$ as~$k\to+\infty$. In particular, $v_k \to v$ in $L^2(\Omega)$ as~$k\to+\infty$, and therefore
    \begin{equation*}
     \lim_{k\to+\infty}   \left|\int_\Omega m(x) \left( v_k^2 - v^2 \right) \, dx\right| \leq \| m \|_{L^\infty}\lim_{k\to+\infty} \int_\Omega |v_k^2 - v^2| \, dx = 0.
    \end{equation*}
 This implies that $v$ satisfies the constraint
    \begin{equation*}
        \int_\Omega m(x)  v^2(x) \, dx = 1.
    \end{equation*}
    Moreover, by the weak lower semi-continuity of the functional~$Q$, we deduce that~$v$ is a minimizer.

    Finally, it remains to prove that $v$ satisfies the Neumann boundary condition in $\nN$. In order to see this, we use the Lebesgue Dominated Convergence Theorem and
    we have that, for all~$x \in \nN$,
    \begin{equation*}
   v(x)=\lim_{k\to+\infty}    v_k(x)= \lim_{k\to+\infty}\frac{\displaystyle \int_{\substack{\Omega \\|x-z| \leq 1}} \frac{v_k(z)}{|x-z|^N}\, dz}{ \displaystyle \int_{\substack{\Omega \\|x-z| \leq 1}} \frac{1}{|x-z|^N} \, dz} \to \frac{\displaystyle \int_{\substack{\Omega \\|x-z| \leq 1}} \frac{v(z)}{|x-z|^N}\, dz}{ \displaystyle \int_{\substack{\Omega \\|x-z| \leq 1}} \frac{1}{|x-z|^N} \, dz}.
    \end{equation*}
    Thanks to Lemma \ref{lemma1}, this implies that $v$ satisfies the Neumann condition on $\nN$, and hence is a weak solution of problem \eqref{eigenproblem}. Setting $e:=v$ concludes the proof.
\end{proof}

We now deal with the minimization of the functional~$W$.

\begin{proposition} \label{prop4} 
Suppose that $\mathfrak{N}$ is bounded and that $\dist(\Omega,\mathfrak{N}) \geq d_0 > 0$.

Let $m \in L^\infty(\Omega)$. Then the infimum in the minimization problem
\begin{equation*}
    \widetilde{\lambda}_1(m) := \inf \left\{ W(u) \,\colon\, u \in \HOO(\Omega \cup \nN), \ \int_\Omega  u^2(x) \, dx = 1 \right\}
\end{equation*}
is achieved by some non-negative function $\widetilde{e} \in \HOO(\Omega \cup \nN)$, which is also a weak solution of problem \eqref{eigenschro}.
\end{proposition}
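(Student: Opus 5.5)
I will follow the scheme of Proposition~\ref{prop3}. There are two differences: the constraint $\int_\Omega u^2=1$ is now coercive in $L^2(\Omega)$, while the functional $W$ contains the indefinite term $-\int_\Omega m u^2$.

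\textbf{Finiteness of the infimum.} First observe that the admissible set is non-empty: any $\widetilde\varphi\in C^\infty_0(\Omega)$, normalized in $L^2(\Omega)$, belongs to it. Next, $W$ is bounded from below on the constraint, since
$$W(u)\ge -\int_\Omega m u^2\,dx\ge -\|m\|_{L^\infty(\Omega)}.$$
Hence $\widetilde\lambda_1(m)$ is finite.

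\textbf{A bounded minimizing sequence.} Take a minimizing sequence $(u_k)_k$. By the triangle inequality $\big||u(x)|-|u(y)|\big|\le|u(x)-u(y)|$, we may assume $u_k\ge0$; the constraint and the potential term are unchanged. Replace $u_k$ by the renormalized $v_k$ of Lemma~\ref{lemma4}, i.e.\ set $v_k:=E_{u_k}/E_1$ in $\nN$. This leaves $u_k$ unchanged in $\Omega$, so the constraint and $\int_\Omega m v_k^2$ are preserved. By Lemma~\ref{lemma1}, $Q(v_k)\le Q(u_k)$, hence $W(v_k)\le W(u_k)$ and $(v_k)_k$ is still minimizing and non-negative. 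Along the sequence,
$$Q(v_k)=W(v_k)+\int_\Omega m v_k^2\le \sup_k W(v_k)+\|m\|_{L^\infty(\Omega)},$$
so the seminorm is uniformly bounded, and $\int_\Omega v_k^2=1$. Proposition~\ref{boundedness} then yields a uniform bound in $\HOO(\Omega\cup\nN)$. Note that here the $L^2(\Omega)$ bound comes directly from the constraint, so Proposition~\ref{poincare} is not even needed.

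\textbf{Passage to the limit.} By the compact embedding (\cite[Theorem 2.1]{MR3759570}), a subsequence satisfies $v_k\rightharpoonup v$ weakly in $\HOO(\Omega\cup\nN)$ and $v_k\to v$ in $L^2(\Omega\cup\nN)$. It follows that $v\ge0$ and $\int_\Omega v^2=1$. Moreover, $\int_\Omega m v_k^2\to\int_\Omega m v^2$, because
$$\Big|\int_\Omega m\,(v_k^2-v^2)\Big|\le\|m\|_{L^\infty}\,\|v_k-v\|_{L^2}\,\|v_k+v\|_{L^2}\to0.$$
The quadratic part $Q$ is weakly lower semicontinuous, being a continuous convex functional (alternatively, use Fatou along an a.e.\ convergent subsequence). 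Therefore $W(v)\le\liminf_k W(v_k)=\widetilde\lambda_1(m)$, so $v$ is a minimizer.

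\textbf{Neumann condition and Euler--Lagrange equation.} Exactly as in Proposition~\ref{prop3}, the $L^1(\Omega)$ convergence and dominated convergence give $v=E_v/E_1$ in $\nN$. Alternatively, minimality already forces this: by Lemma~\ref{lemma1}, renormalizing $v$ would strictly decrease $Q$ unless $\mathcal N v=0$. Finally, to obtain the Euler--Lagrange equation, compute the derivative of $t\mapsto W(v+tw)/\int_\Omega(v+tw)^2$ at $t=0$. Since $W$ is quadratic, it is equivalent to minimize this quotient over $u\ne0$. This gives the weak formulation of \eqref{eigenschro} with $\lambda=\widetilde\lambda_1(m)$, and we set $\widetilde e:=v$.

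The only delicate point is obtaining the seminorm bound despite the sign-changing $m$. This is handled by the $L^\infty$ bound on $m$ together with the constraint, so I expect no genuine obstacle beyond repeating the argument of Proposition~\ref{prop3}.
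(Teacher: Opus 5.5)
Your proposal is correct and follows essentially the same route as the paper. Both arguments pass to a non-negative minimizing sequence renormalized in $\nN$ via Lemma~\ref{lemma1}, and both bound the seminorm using $\bigl|\int_\Omega m u_k^2\,dx\bigr|\le\|m\|_{L^\infty(\Omega)}$ together with the constraint. Both then invoke Proposition~\ref{boundedness} and the compact embedding, and conclude by convergence of the potential term and weak lower semicontinuity of the quadratic part. The paper leaves implicit the extra checks you supply: finiteness of $\widetilde\lambda_1(m)$, the Neumann condition for the limit, and the Euler--Lagrange computation.
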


\begin{proof}
Let $(u_k)_k \subset \HOO(\Omega \cup \nN)$ be a minimizing sequence. without loss of generality, after renormalizing the sequence we may also assume that it is non-negative and satisfies~$\cN u_k = 0$ (thanks to Lemma~\ref{lemma1}).

Observe that
\begin{equation*}
    \left| \int_\Omega m(x) u_k^2 \right| \leq \| m \|_{L^\infty(\Omega)},
\end{equation*}
which entails that $$
\sup_{k\in\N} \frac12\iint_{ \cQ \cap\{ |x - y| \leq 1\}} \frac{(u_k(x)-u_k(y))^2}{|x-y|^N}\,dx\,dy
=\sup_{k\in\N} W(u_k)+\int_\Omega m(x) u_k^2 
<+\infty.$$
Thanks to this, we can apply Proposition \ref{boundedness}
and deduce that~$(u_k)_k$ is uniformly
bounded in~$\HOO(\Omega \cup \nN)$. 

Thus, there exists $\widetilde{e} \in \HOO(\Omega \cup \nN)$ such that $u_k \rightharpoonup \widetilde{e}$ as $k \to \infty$.  
Recalling the compact embedding $\HOO(\Omega \cup \nN) \hookrightarrow L^2(\Omega \cup \nN)$ (see \cite[Theorem 2.1]{MR3759570})
and the fact that $m \in L^\infty(\Omega)$, it follows that
\begin{equation*}
    \lim_{k \to \infty} \int_\Omega m(x) u_k^2 \, dx = \int_\Omega m(x) \widetilde{e}^2 \, dx
    \qquad\text{and}\qquad
     \int_\Omega \widetilde{e}^2 \, dx=\lim_{k \to \infty} \int_\Omega u_k^2 \, dx = 1.
\end{equation*}
Moreover, by the weak sequential lower semi-continuity of the norm, we have
\begin{equation*}
    \iint_{\cQ \cap\{ |x-y| \leq 1\}}
    \frac{(\widetilde{e}(x) - \widetilde{e}(y))^2}{|x - y|^N} \, dx \, dy\leq
    \liminf_{k \to+ \infty}
    \iint_{\cQ \cap \{|x-y| \leq 1\}}
    \frac{(u_k(x) - u_k(y))^2}{|x - y|^N} \, dx \, dy,
\end{equation*}
which shows that $\widetilde{e}$ is a minimizer, thus concluding the proof.
\end{proof}

\begin{proposition} \label{prop7}

Suppose that $\mathfrak{N}$ is bounded and that $\dist(\Omega,\mathfrak{N}) \geq d_0 > 0$.

Then there exists $\overline{m}^+>0$ such that, if $\Vert m^+ \Vert_{L^\infty(\Omega)} <\overline{m}^+, $ then $\widetilde{\lambda}_1(m) \geq 0$. 
\end{proposition}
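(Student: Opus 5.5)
The plan is to combine the Poincaré-type inequality of Proposition~\ref{poincare} with the trivial bound on the potential term. Let $C_P>0$ denote the constant of Proposition~\ref{poincare}, which depends only on $\Omega$, $\nN$ and $d_0$ and not on $m$. We will show that
$$\overline{m}^+:=\frac{1}{2C_P}$$
does the job.

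Let $u\in\HOO(\Omega\cup\nN)$ satisfy $\int_\Omega u^2\,dx=1$. First, since $m\le m^+\le\|m^+\|_{L^\infty(\Omega)}$ a.e. in $\Omega$,
$$\int_\Omega m(x)u^2\,dx\le \|m^+\|_{L^\infty(\Omega)}\int_\Omega u^2\,dx=\|m^+\|_{L^\infty(\Omega)}.$$
Second, Proposition~\ref{poincare} together with the constraint gives
$$1=\int_\Omega u^2\,dx\le C_P\iint_{\cQ\cap\{|x-y|\le1\}}\frac{(u(x)-u(y))^2}{|x-y|^N}\,dx\,dy .$$
Hence the quadratic part of $W$ satisfies
$$\frac12\iint_{\cQ\cap\{|x-y|\le1\}}\frac{(u(x)-u(y))^2}{|x-y|^N}\,dx\,dy\ge \frac{1}{2C_P}.$$
Combining the two estimates,
$$W(u)\ge \frac{1}{2C_P}-\|m^+\|_{L^\infty(\Omega)}=\overline{m}^+-\|m^+\|_{L^\infty(\Omega)},$$
and the right-hand side is strictly positive whenever $\|m^+\|_{L^\infty(\Omega)}<\overline{m}^+$. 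Taking the infimum over all admissible $u$ gives $\widetilde\lambda_1(m)\ge \overline{m}^+-\|m^+\|_{L^\infty(\Omega)}\ge 0$.

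The only point that needs care is that $\overline{m}^+$ must not depend on $m$. This holds because the inequality in Proposition~\ref{poincare} involves only the kernel and the sets $\Omega$, $\nN$, $\D$, so $C_P$ is independent of the weight. Note also that we never use the minimizer from Proposition~\ref{prop4}: the lower bound is uniform over the whole admissible class. Consequently no compactness argument is needed, and I do not expect any step to be a real obstacle.
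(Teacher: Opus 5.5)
Your proposal is correct and follows the paper's proof: both combine the Poincar\'e inequality of Proposition~\ref{poincare} with the bound $\int_\Omega m\,u^2\,dx\le\|m^+\|_{L^\infty(\Omega)}\int_\Omega u^2\,dx$ to obtain $W(u)\ge\bigl(c-\|m^+\|_{L^\infty(\Omega)}\bigr)\int_\Omega u^2\,dx$ for a constant $c>0$ independent of $m$. Your version is slightly more explicit, identifying $\overline{m}^+=1/(2C_P)$ (the paper's constant is implicitly this reciprocal) and stating that $C_P$ does not depend on $m$.
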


\begin{proof}
    In view of Proposition \ref{poincare}, it is possible to find a constant $C>0$ such that
    \begin{equation*}
        W(u) \geq C \int_\Omega u^2 \, dx-\int_\Omega m(x) u^2 \, dx \geq \left( C - \Vert m^+ \Vert_{L^\infty(\Omega)}\right) \int_\Omega u^2 \, dx.
    \end{equation*}
Thus, if~$\Vert m^+ \Vert_{L^\infty(\Omega)}$ is sufficiently
small, then $W(u) \geq 0$ and the statement plainly follows.
\end{proof}

\section{Proof of Theorem \ref{th4}} \label{section5}

This section is devoted to the proof of Theorem \ref{th4}. The idea is to determine whether the minimizers of the functional are trivial or non-trivial. In this direction, a non-trivial minimum will exist if we can exhibit a suitable competitor and our candidate will be the first eigenfunction of problem \eqref{eigenproblem}, provided that $\lambda_1(m) < 1$. On the other hand, the minimum is trivial if we prove that $\widetilde{\lambda}_1(m) \geq 0$, since this implies that the functional is non-negative and therefore its minimizer is the zero function. The next result formalizes these ideas.

\begin{lemma} \label{lemma5}  
Suppose that $\nN$ is bounded and $\dist(\Omega, \nN) \geq d_0 > 0$. 

Let $m \in L^\infty(\Omega)$ and $\mu \in L^1(\Omega)$. 
Then:
\begin{itemize}
    \item if $\lambda_1(m) < 1$, then problem \eqref{mainlogistic} admits a non-trivial non-negative solution,
    \item if $\widetilde{\lambda}_1(m) \geq 0$ then the minimizers of $\cE$, which are also solutions of \eqref{mainlogistic}, are all trivial. 
\end{itemize}
\end{lemma}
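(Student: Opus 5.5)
The plan is to treat both items through the energy $\cE$ with the kernel $\mathcal{K}(z)=|z|^{-N}\chi_{B_1}(z)$. In this setting the Gagliardo term of $\cE$ equals $\frac14\iint_{\cQ\cap\{|x-y|\le1\}}\frac{(u(x)-u(y))^2}{|x-y|^N}\,dx\,dy=\frac12 Q(u)$. Hence
\begin{equation*}
\cE(u)=\frac12\Big(Q(u)-\int_\Omega m u^2\,dx\Big)+\frac13\int_\Omega\mu|u|^3\,dx=\frac12 W(u)+\frac13\int_\Omega\mu|u|^3\,dx .
\end{equation*}
This kernel satisfies $(K_1)$--$(K_4)$ (with $\ell\equiv1$), and $F$ satisfies $(f_1)$--$(f_2)$ by Proposition~\ref{prop2}. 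So Theorem~\ref{th1} gives a non-negative minimizer $u_0$ of $\cE$, which solves \eqref{mainlogistic}. It then suffices to decide whether $\min\cE<0$ or $\min\cE=0$.

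\textbf{First item.} If $\lambda_1(m)<1$, then the set $\{\int_\Omega m u^2=1\}$ is non-empty. Proposition~\ref{prop3} should supply a non-negative minimizer $e$ with $Q(e)=\lambda_1(m)$ and $\int_\Omega m e^2=1$. The hypothesis on $\mathcal S$ there is used only to ensure non-emptiness; if it is needed, I would instead take a competitor $u$ in the admissible set with $Q(u)<1$, which exists by the definition of the infimum, and replace it by $|u|$. For $\e>0$ small I would test with $\e e$:
\begin{equation*}
\cE(\e e)=\frac{\e^2}{2}\big(\lambda_1(m)-1\big)+\frac{\e^3}{3}\int_\Omega\mu e^3\,dx .
\end{equation*}
The cubic term needs to be finite. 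For a general competitor this is the main technical point, since $\mu\in L^1$ only. I would handle it by truncating, replacing $e$ with $\min\{e,T\}$. Lemma~\ref{lemma2} shows this does not increase $Q$, and $\int_\Omega m\min\{e,T\}^2\to1$ as $T\to\infty$ by dominated convergence. So for $T$ large the ratio $Q/\int m(\cdot)^2$ stays below $1$, and then $\int\mu\min\{e,T\}^3\le T^3\|\mu\|_{L^1}<\infty$. With this, $\cE(\e e_T)<0$ for small $\e$. Therefore $\min\cE<0=\cE(0)$, so the minimizer $u_0$ from Theorem~\ref{th1} is non-trivial and non-negative.

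\textbf{Second item.} If $\widetilde\lambda_1(m)\ge0$, then by homogeneity $W(u)\ge0$ for every $u\in\HOO(\Omega\cup\nN)$. For $u\not\equiv0$ on $\Omega$ this follows from scaling $\int_\Omega u^2=1$. If instead $u=0$ in $\Omega$, then $W(u)$ is just the non-negative seminorm term. Consequently
\begin{equation*}
\cE(u)\ge\frac13\int_\Omega\mu|u|^3\,dx\ge\frac{\overline\mu}{3}\int_\Omega|u|^3\,dx .
\end{equation*}
Here I use that $\mu\ge\overline\mu$, which I assume as in Theorem~\ref{th4}; otherwise $\mu\ge0$ suffices for the inequality. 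Now let $u$ be a minimizer. Since $\cE(0)=0$, we get $\cE(u)\le0$, which forces $u=0$ a.e.\ in $\Omega$, and then $W(u)=0$ forces the seminorm to vanish. The seminorm vanishing makes $u$ constant across the pairs $(x,y)$ with $|x-y|\le1$ that meet $\Omega$. In particular, for $x\in\nN$ it gives $u(x)=0$ wherever $E_1(x)>0$, which is exactly where the Neumann normalization of Lemma~\ref{lemma1} is defined. Hence $u\equiv0$ in $\Omega\cup\nN$ (up to the null set where the kernel does not see $\Omega$; these points are irrelevant to the functional). So all minimizers are trivial.

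I expect the only delicate step to be the integrability of $\mu e^3$ in the first item, which the truncation argument resolves. The rest is the algebraic identity relating $\cE$ to $Q$ and $W$.
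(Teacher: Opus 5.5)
Your argument is correct. The second item follows the paper's reasoning, but the first item takes a different and more self-contained route.

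\textbf{First item.} The paper uses the eigenfunction $e$ from Proposition~\ref{prop3}. It makes the cubic term finite by asserting that $e$ may be taken bounded ``by a standard mollification argument'' together with an external $L^\infty$ estimate, which gives the exact identity $\cE(\e e)=\frac{\e^2}{2}(\lambda_1(m)-1)+\frac{\e^3}{3}\int_\Omega\mu e^3\,dx$. You truncate instead. Lemma~\ref{lemma2} and dominated convergence keep $Q(e_T)-\int_\Omega m e_T^2\,dx<0$ for large $T$, and $\int_\Omega\mu e_T^3\,dx\le T^3\|\mu\|_{L^1(\Omega)}$. You lose the exact eigenvalue identity, but only its sign is needed. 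In exchange you need no regularity theory. With your fallback of any admissible $u$ with $Q(u)<1$, you also need no attainment of $\lambda_1(m)$. Hence the open set $\mathcal{S}$ required by Proposition~\ref{prop3}, which is not among the lemma's hypotheses, becomes unnecessary. Your route also avoids a delicate point in the paper's sketch. Since $\dist(\Omega,\nN)\ge d_0$, the exterior collar of $\Omega$ lies in $\D$, so a mollified $e$ need not vanish there.

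\textbf{Second item.} Here you are more careful than the paper. Its claim that $\cE(u)>0$ for $u\neq0$ holds only when $u\not\equiv0$ in $\Omega$, and your step through $Q(u)=0$ fills this in.

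One correction: the set $\{x\in\nN:\ |B_1(x)\cap\Omega|=0\}$ is generally not null; it is all of $\nN$ when $d_0>1$. So ``trivial'' must be read as $u=0$ on $\Omega$ and on $\{x\in\nN:\ |B_1(x)\cap\Omega|>0\}$. As you say, values elsewhere do not enter $\cE$, so minimizers are genuinely non-unique there.
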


\begin{proof}
By Theorem \ref{th1} we obtain the existence of a non-negative function $u$ minimizing the energy $\cE$ and solving the equation in \eqref{mainlogistic}.

We now prove that this solution is not identically zero under the condition $\lambda_1(m) < 1$. Let~$e$ be an eigenfunction corresponding to the first eigenvalue $\lambda_1:=\lambda_1(m)$ of \eqref{eigenproblem} as given by Proposition \ref{prop3}, with $e \in \HOO(\Omega \cup \nN)$ and
\begin{equation*} 
\frac{1}{2} \iint_{  \cQ \cap \{|x - y| \leq 1\}} \frac{(e(x) - e(y))(w(x) - w(y))}{|x - y|^N} \,dx\,dy = \lambda_1 \int_\Omega m e w\,dx,
\end{equation*}
for all $w \in \HOO(\Omega \cup \nN)$. Choosing $w := e$, we obtain
\begin{equation*}
\frac{1}{2} \iint_{ \cQ \cap \{|x - y| \leq 1\}} \frac{(e(x) - e(y))^2}{|x - y|^N} \,dx\,dy = \lambda_1 \int_\Omega m e^2\,dx,
\end{equation*}
and by renormalizing, we may assume
\begin{equation*}
\int_\Omega m e^2\, dx = 1.
\end{equation*}
Moreover, by a standard mollification argument and applying \cite[Proposition 4.1]{MR4002272} outside a compact region containing $\Omega \cup \nN$, it is not restrictive to suppose that~$e$ is bounded.

Let us evaluate \begin{align*}
\mathcal{E}(\varepsilon e) &= \frac{\varepsilon^2}{2} \left[ \frac{1}{2} \iint_{ \cQ \cap\{ |x - y| \leq 1\}} \frac{(e(x) - e(y))^2}{|x - y|^N} \,dx\,dy - \int_\Omega m e^2\, dx \right] + \frac{\varepsilon^3}{3} \int_\Omega \mu e^3\, dx \\
&= \frac{\varepsilon^2}{2} (\lambda_1 - 1) + \frac{\varepsilon^3}{3} \int_\Omega \mu e^3\, dx \\
&= -\frac{c_1}{2} \varepsilon^2 + \frac{c_2}{3} \varepsilon^3,
\end{align*}
where we define
\[
c_1 := 1 - \lambda_1 > 0 \qquad{\mbox{and}}\qquad c_2 := \int_\Omega \mu e^3\, dx.
\]
Since $e$ is bounded, $c_2$ is finite. For $\varepsilon$ sufficiently small, we thus
have that $\mathcal{E}(\varepsilon e) < 0 = \mathcal{E}(0)$ and thus the minimizer is not trivial.

Now, we turn our attention to the case $\widetilde{\lambda}_1:=\widetilde{\lambda}_1(m) \geq 0$. It follows from Proposition~\ref{prop4} that
\begin{equation*}
    \cE(u)\geq \widetilde{\lambda}_1 \int_\Omega u^2 \, dx + \frac{1}{3} \int_\Omega \mu(x)u^3 \, dx.
\end{equation*}
We observe that $\cE(u) \geq 0$ and $\cE(u)>0$ if $u \neq 0$ since $\mu(x) \geq \overline{\mu}$ and $u $ is non-negative. Thus  the only minimizer is the trivial one and this concludes the proof.   
\end{proof}

Now we look at conditions on the resource $m$ where the hypothesis $\lambda_1(m) <1$ is satisfied. In order do that, we study an optimization problem of $\lambda_1(m)$ with respect to $m$. We start by fixing some notation. Given $\overline{m}$,
$\underline{m}\in (0,+\infty)$ and $m_0\in (-\underline{m},0)$ 
we consider the class of resources 
\begin{equation}\label{resources}\begin{split}
\mathscr{M}=\mathscr{M}(\overline{m},\underline{m},m_0)
:=\; & \Big\lbrace m\in L^\infty(\Omega)\; {\mbox{ s.t. }}\;
\inf_\Omega m\ge -\underline{m},\quad
\sup_\Omega m\leq \overline{m},\\
&\qquad\qquad
\int_\Omega m(x)\,dx=m_0|\Omega|\quad{\mbox{ and }}\quad
m^+\not\equiv 0
\Big\rbrace.
\end{split}\end{equation}
We will also consider the smallest possible first eigenvalue
among all the resources in $\mathscr{M}$, namely we set
\begin{equation}\label{eq24}
\underline{\lambda}:=\inf_{m\in\mathscr{M}}\lambda_1(m).
\end{equation}

We first show that the distribution of resources that minimizes
the eigenvalue in \eqref{eq24} has a bang-bang structure, i.e.,
it only takes the extreme values $\underline{m}$ and $\overline{m}$.
This fact follows from the classical ``bathtub principle''
(see Lemma 3.3 in \cite{MR2660987}, or \cite{MR1415616, MR2281509}),
which we recall here for convenience:

\begin{lemma}\label{eq25}
Let $f\in L^1(\Omega)$ and ${\mathscr{M}}$ be as in \eqref{resources}.
Then the maximization problem 
\[
\sup_{m\in \mathscr{M}} \int_\Omega fm\,dx
\]
is attained by some $m\in\mathscr{M}$ of the form
\[
m:=\overline{m}\chi_D-\underline{m}\chi_{\Omega\setminus D},
\]
for a measurable subset $D\subset \Omega$
such that
\begin{equation}\label{eq22}
|D|=\frac{\underline{m}+m_0}{\underline{m}+\overline{m}}|\Omega|.
\end{equation}
\end{lemma}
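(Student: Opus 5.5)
This is the classical bathtub principle, and I would prove it through the level sets of $f$. Set $\alpha:=\frac{\underline{m}+m_0}{\underline{m}+\overline{m}}|\Omega|$. Since $m_0\in(-\underline{m},0)$, we have $\alpha\in(0,|\Omega|)$. Every $m\in\mathscr{M}$ can be written as $m=-\underline{m}+(\overline{m}+\underline{m})g$, where $g:=\frac{m+\underline{m}}{\overline{m}+\underline{m}}$ takes values in $[0,1]$. The integral constraint $\int_\Omega m=m_0|\Omega|$ is equivalent to $\int_\Omega g=\alpha$. Therefore
$$\int_\Omega fm\,dx=-\underline{m}\int_\Omega f\,dx+(\overline{m}+\underline{m})\int_\Omega fg\,dx,$$
and the problem reduces to maximizing $\int_\Omega fg$ over all measurable $g$ with $0\le g\le1$ and $\int_\Omega g=\alpha$. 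I would note in passing that the condition $m^+\not\equiv0$ causes no trouble for the optimizer, because $|D|=\alpha>0$ gives $m^+=\overline{m}\chi_D\not\equiv0$.

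To build $D$, consider the distribution function $t\mapsto|\{f>t\}|$. It is non-increasing and right-continuous, and it tends to $|\Omega|$ as $t\to-\infty$ and to $0$ as $t\to+\infty$, because $f\in L^1$ is finite almost everywhere. Let $t^*:=\inf\{t:|\{f>t\}|\le\alpha\}$. Then $|\{f>t^*\}|\le\alpha\le|\{f\ge t^*\}|$. By the non-atomic nature of Lebesgue measure (continuity of $s\mapsto|\{f=t^*\}\cap B_s|$), we can choose a measurable set $E\subset\{f=t^*\}$ with $|E|=\alpha-|\{f>t^*\}|$. Define $D:=\{f>t^*\}\cup E$, so that $|D|=\alpha$. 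The corresponding $m:=\overline{m}\chi_D-\underline{m}\chi_{\Omega\setminus D}$ then lies in $\mathscr{M}$, and it satisfies \eqref{eq22}.

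For optimality, take any admissible $g$. Because $\int_\Omega(\chi_D-g)=0$, we may subtract $t^*$ times this quantity and write
$$\int_\Omega f(\chi_D-g)\,dx=\int_\Omega (f-t^*)(\chi_D-g)\,dx.$$
The integrand is pointwise non-negative. On $D$ we have $f\ge t^*$ and $\chi_D-g=1-g\ge0$. Off $D$ we have $f\le t^*$ and $\chi_D-g=-g\le0$. Hence $\int_\Omega f\chi_D\ge\int_\Omega fg$, so the supremum is attained at $\chi_D$, that is, at the bang-bang $m$ above.

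I expect the only delicate step to be the construction of $D$ when $\{f=t^*\}$ has positive measure. This is exactly where the non-atomicity argument via $s\mapsto|\{f=t^*\}\cap B_s|$ is needed. Everything else is routine.
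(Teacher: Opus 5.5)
Your proposal is correct. After the affine change of variables $g=(m+\underline{m})/(\overline{m}+\underline{m})$, it is the standard level-set proof of the bathtub principle, which the paper does not reprove but simply quotes from the literature; your remark that $|D|=\alpha>0$ forces $m^+=\overline{m}\chi_D\not\equiv 0$ correctly handles the one constraint in $\mathscr{M}$ that is absent from the textbook statement.
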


In view of Lemma \ref{eq25},
the optimization of the eigenvalue $\lambda_1$ in \eqref{eq24}
can be restricted to bang-bang functions.
More precisely, we set
\begin{equation} \label{resources2} \begin{split}
\widetilde{\mathscr{M}}=
\widetilde{\mathscr{M}}(\overline{m},\underline{m},m_0):=\; &
\Bigg\lbrace m\in \mathscr{M} \;:\;
m=\overline{m}\chi_D-\underline{m}\chi_{\Omega\setminus D},\\ &\qquad
\text{for some set $D \subset \Omega$ with}\quad
|D|=\frac{\underline{m}+m_0}{\underline{m}+\overline{m}}|\Omega|
\Bigg\rbrace\end{split}
\end{equation}
and we obtain the following result:
 
\begin{proposition}\label{prop5}
We have
\begin{equation*}
\underline{\lambda}=\inf_{m\in\widetilde{\mathscr{M}}}\lambda_1(m).
\end{equation*}
\end{proposition}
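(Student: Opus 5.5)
Since $\widetilde{\mathscr{M}}\subset\mathscr{M}$, we immediately have $\underline{\lambda}\le\inf_{m\in\widetilde{\mathscr{M}}}\lambda_1(m)$. The plan is therefore to prove the reverse inequality: for every $m\in\mathscr{M}$ we will produce some $\widetilde m\in\widetilde{\mathscr{M}}$ with $\lambda_1(\widetilde m)\le\lambda_1(m)$.

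Fix $m\in\mathscr{M}$. By the definition of $\mathscr{M}$ we have $m^+\not\equiv0$. We will assume (as the paper implicitly does when working with $\lambda_1$, via Proposition~\ref{prop3}) that $\lambda_1(m)$ is attained. Concretely, let $e\in\HOO(\Omega\cup\nN)$ be a non-negative minimizer with $\int_\Omega m e^2\,dx=1$ and $Q(e)=\lambda_1(m)$. If instead we only know that $\lambda_1(m)$ is an infimum, we take an admissible $u$ with $Q(u)\le\lambda_1(m)+\e$ and run the same argument, letting $\e\to0$ at the end.

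Next we apply Lemma~\ref{eq25} with $f:=e^2$, which lies in $L^1(\Omega)$ because $e\in L^2(\Omega)$. This gives $\widetilde m=\overline{m}\chi_D-\underline{m}\chi_{\Omega\setminus D}\in\widetilde{\mathscr{M}}$, with $|D|$ as in~\eqref{eq22}, such that
\[
\int_\Omega \widetilde m\, e^2\,dx\ \ge\ \int_\Omega m\, e^2\,dx=1 .
\]
Set $t:=\int_\Omega\widetilde m e^2\,dx\ge1$ and $\widetilde e:=e/\sqrt t$. Then $\widetilde e\in\HOO(\Omega\cup\nN)$ and $\int_\Omega\widetilde m\,\widetilde e^{\,2}\,dx=1$, so $\widetilde e$ is admissible for $\lambda_1(\widetilde m)$. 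Since $Q$ is $2$-homogeneous,
\[
\lambda_1(\widetilde m)\le Q(\widetilde e)=\frac{Q(e)}{t}\le Q(e)=\lambda_1(m),
\]
where the last inequality uses $Q\ge0$ and $t\ge1$. Taking the infimum over $m\in\mathscr{M}$ then yields $\inf_{\widetilde{\mathscr{M}}}\lambda_1\le\underline\lambda$, which is the missing inequality.

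The main thing to check is that the bang-bang function $\widetilde m$ produced by Lemma~\ref{eq25} really belongs to $\widetilde{\mathscr{M}}$, and in particular to $\mathscr{M}$. Three points need verification.
\begin{itemize}
\item $\widetilde m^+\not\equiv0$: this holds because $|D|>0$, which follows from $m_0>-\underline m$.
\item The mean condition: $\overline m|D|-\underline m(|\Omega|-|D|)=m_0|\Omega|$, which follows directly from~\eqref{eq22}.
\item The pointwise bounds $-\underline m\le\widetilde m\le\overline m$: these hold by construction.
\end{itemize}
A further subtlety is that the admissible sets for $\lambda_1(m)$ and $\lambda_1(\widetilde m)$ are non-empty. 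This is needed so that all the quantities above are well defined and finite. For $m$ it is part of the setting of Proposition~\ref{prop3}. For $\widetilde m$ it is witnessed by $\widetilde e$ itself, so no extra hypothesis on an open set $\mathcal S$ is needed for the inequality.
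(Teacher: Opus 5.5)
Your proof is correct and matches the paper's argument: you apply the bathtub principle (Lemma~\ref{eq25}) with $f=e^2$ for a (near-)minimizer $e$ of $\lambda_1(m)$, then compare Rayleigh quotients. Your normalization $\widetilde e=e/\sqrt t$ together with the $2$-homogeneity of $Q$ is the same estimate as the paper's quotient bound. Your fallback to an $\e$-almost minimizer is a small improvement, because it removes the reliance on an eigenfunction from Proposition~\ref{prop3}, whose open-set hypothesis is not part of the definition of $\mathscr{M}$.
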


\begin{proof}
Set
\[
\widetilde{\lambda}:=\inf_{m\in\widetilde{\mathscr{M}}}\lambda_1(m).
\]
We claim that 
\begin{equation}\label{eq26}
\underline{\lambda}=\widetilde{\lambda}.\end{equation}
Since $\widetilde{\mathscr{M}}\subset 
\mathscr{M}$, it is clear that
\begin{equation}\label{eq23}
\underline{\lambda}\leq\widetilde{\lambda}.\end{equation}
Conversely, by the definition of $\underline{\lambda}$ in \eqref{eq24},
for every $\varepsilon>0$ there exists $m_\varepsilon \in\mathscr{M}$
such that~$\underline{\lambda}+\varepsilon \geq \lambda_1(m_\varepsilon)$.  
Let $e_\varepsilon$ denote the
non-negative eigenfunction associated with~$\lambda_1(m_\varepsilon)$.
Then
\begin{equation}\label{eq27}
\underline{\lambda}+\varepsilon \geq
\lambda_1(m_\varepsilon)=
\frac{\displaystyle \frac12\iint_{ \cQ \cap\{ |x - y| \leq 1\}} \frac{ (e_\varepsilon(x)-e_\varepsilon(y))^2}{|x-y|^N}\,dx\,dy}{
\displaystyle\int_\Omega m_\epsilon e_\varepsilon ^2\,dx}.
\end{equation}
By Lemma \ref{eq25},
$$\int_\Omega m_\epsilon e_\varepsilon ^2\,dx\le 
\int_\Omega\big(\overline{m}
\chi_{D_\varepsilon}-\underline{m}\chi_{\Omega\setminus {D_\varepsilon}}\big)e_\varepsilon ^2\,dx,$$
for some ${D_\varepsilon}\subset\Omega$ satisfying~\eqref{eq22}.

Setting $m^\star_\epsilon:=
\overline{m}
\chi_{D_\varepsilon}-\underline{m}\chi_{\Omega\setminus {D_\varepsilon}}$,
we obtain
$$ \underline{\lambda}+\varepsilon \geq 
\frac{\displaystyle\frac12 \iint_{\cQ \cap\{ |x - y| \leq 1\}} \frac{ (e_\varepsilon(x)-e_\varepsilon(y))^2}{|x-y|^N}\,dx\,dy}{
\displaystyle\int_\Omega(\overline{m}
\chi_{D_\varepsilon}-\underline{m}\chi_{\Omega\setminus {D_\varepsilon}})e_\varepsilon ^2\,dx}
\geq \lambda_1 (m^\star_\epsilon )\ge \widetilde{\lambda}.
$$
Letting $\varepsilon\to0$, we conclude that $\underline{\lambda}\geq\widetilde{\lambda}$.
Together with \eqref{eq23},
this proves \eqref{eq26}.
\end{proof}

In lieu of Proposition \ref{prop5},
from now on, when optimizing $\lambda_1(m)$
as in \eqref{eq24}, we restrict ourselves to functions in the set $\widetilde{\mathscr{M}}$
defined in \eqref{resources2}. In this setting, our
next result shows that when $\overline{m}$ is very large, then $\underline{\lambda}(\overline{m},\underline{m},m_0)$ is very close to zero. 

\begin{proposition}\label{prop6} 
Assume that 
\begin{equation}\label{eq28}
\frac{\underline{m}+m_0}{\underline{m}+\overline{m}} \,\geq\, k_0
\end{equation}
for some constant $k_0>0$.  
Then there exists $C=C(\Omega,k_0)>0$ such that
\[
\underline{\lambda}(\overline{m},\underline{m},m_0)\;\leq\;\frac{C}{\overline{m}}.
\]
\end{proposition}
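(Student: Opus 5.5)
We prove the bound by testing $\lambda_1(m)$ against one fixed function and choosing a favourable bang-bang resource. By Proposition~\ref{prop5} it suffices to exhibit a single $m\in\widetilde{\mathscr{M}}$ and a single competitor $u\in\HOO(\Omega\cup\nN)$ with $\int_\Omega m u^2>0$ for which the Rayleigh quotient satisfies
$$\lambda_1(m)\le \frac{Q(u)}{\int_\Omega m u^2\,dx}\le \frac{C}{\overline{m}}.$$
Since $\lambda_1(m)=\inf Q(v)$ over $\int_\Omega m v^2=1$, we may rescale $u$: for any $u$ with $\int_\Omega m u^2>0$, the function $u/(\int_\Omega mu^2)^{1/2}$ is admissible, so $\lambda_1(m)\le Q(u)/\int_\Omega mu^2$.

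\emph{Choice of $D$ and $u$.} Let $\alpha:=\frac{\underline{m}+m_0}{\underline{m}+\overline{m}}$; by \eqref{eq28}, $\alpha\ge k_0$. Fix once and for all a ball $B\Subset\Omega$ (depending only on $\Omega$) and a function $\psi\in C^\infty_0(B,[0,1])$ with $\psi=1$ on the concentric ball $B'$ of half radius. Extend $\psi$ by zero outside $B$; then $\psi\in\HOO(\Omega\cup\nN)$ because it is smooth and compactly supported, so the near-diagonal kernel $|x-y|^{-N}\chi_{\{|x-y|\le1\}}$ is integrable against $(\psi(x)-\psi(y))^2\le C|x-y|^2$. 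Set $c_\psi:=Q(\psi)<+\infty$; it depends only on $\Omega$.

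We choose $D\subset\Omega$ with $|D|=\alpha|\Omega|$ so that $D$ contains as much of $B'$ as possible. If $\alpha|\Omega|\ge|B'|$, take $D\supset B'$, giving $\int_\Omega m\psi^2\ge \overline{m}|B'|-\underline{m}|B\setminus B'|$. The obstacle is the negative contribution $\underline{m}$, which a priori need not be small compared to $\overline m$. Note however that $m_0<0$ and \eqref{eq28} give $\underline{m}>\underline{m}+m_0\ge k_0(\underline{m}+\overline{m})$ only in a weak form; more usefully, $\underline m+m_0\le\underline m$ forces $k_0\overline{m}\le(1-k_0)\underline{m}$, so $\underline m$ may be comparable to or larger than $\overline m$. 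To neutralise it, pick $D\supset B$ instead, which is possible when $|B|\le k_0|\Omega|$; this is a choice of $B$ depending only on $\Omega$ and $k_0$. Then $\int_\Omega m\psi^2=\overline{m}\int_B\psi^2\ge\overline m|B'|$, with no negative part at all. This is the key step. Since $|D|=\alpha|\Omega|\ge k_0|\Omega|\ge|B|$, such a $D$ exists (complete $B$ with any measurable set of the right measure), and $m^+\not\equiv0$ holds, so $m\in\widetilde{\mathscr M}$.

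\emph{Conclusion.} Combining the above,
$$\underline{\lambda}\le\lambda_1(m)\le\frac{c_\psi}{\overline m\,|B'|}=:\frac{C(\Omega,k_0)}{\overline m}.$$
The only delicate point is the choice of the ball $B$ with $|B|\le k_0|\Omega|$ so that it fits inside every admissible $D$. Everything else is the Rayleigh quotient characterisation from Proposition~\ref{prop3}; we do not need attainment, only the definition of $\lambda_1(m)$ as an infimum.
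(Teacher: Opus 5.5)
Your proposal is correct and follows essentially the paper's argument. You fix a ball $B\subset\Omega$ with $|B|$ small compared to $k_0|\Omega|$, take the bang-bang resource $\overline{m}\chi_D-\underline{m}\chi_{\Omega\setminus D}$ with $D\supset B$ of the prescribed measure, and test the Rayleigh quotient with a bump supported in $B$, so the denominator is exactly $\overline{m}\int_B v^2$. The only difference is cosmetic: the paper builds $D$ explicitly as $B\cup(\Omega\cap\{x_n<\xi^*\})$ through a continuity argument in $\xi$, whereas you simply complete $B$ by a measurable set of the right measure. Also, you do not actually need Proposition~\ref{prop5}, only the inclusion $\widetilde{\mathscr{M}}\subset\mathscr{M}$.
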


\begin{proof}
Let $B\subset \Omega$ be a ball such that 
\begin{equation}\label{eq29}
|B|\leq \frac{k_0}{2}|\Omega|.
\end{equation}
Up to translations, without loss of generality,
we may assume that~$\Omega\subset \lbrace x_n>0 \rbrace$.

For $\xi\ge0$, we define
\[
\Omega_\xi:=B \cup(\lbrace x_n<\xi \rbrace\cap \Omega).
\]
Clearly $|\Omega_\xi|$ is non-decreasing in $\xi$, and we set
\[
\xi^*:=\sup \left\lbrace \xi\ge0:\, 
|\Omega_\xi|<\frac{\underline{m}+m_0}{\underline{m}+\overline{m}}|\Omega| \right\rbrace.
\]

We claim that, for every $\underline{\xi}>0$,
\begin{equation}\label{eq30}
\lim_{\xi\to \underline{\xi}}|\Omega_{\xi}|=
|\Omega_{\underline{\xi}}|.\end{equation}
To see this, we show that
\begin{equation}\label{eq31}
\lim_{\xi\to \underline{\xi}}\chi_{\Omega_{\xi}}(x)=
\chi_{\Omega_{\underline{\xi}}}(x)\qquad {\mbox{ for a.e. }}x\in\Omega.
\end{equation}

If $x=(x',x_n)\in \Omega_{\underline{\xi}}$, then either $x\in B$ or $x_n<\underline{\xi}$.  
If $x\in B$, then $x\in \Omega_{\xi}$ for every $\xi>0$, and \eqref{eq31} holds since in this case we have $\chi_{\Omega_{\xi}}(x)=\chi_{\Omega_{\underline{\xi}}}(x)=1$.  

If instead $x_n<\underline{\xi}$, there exists 
$\widetilde{\xi}\in (x_n, \underline{\xi})$ such that, for all $\xi \in (\widetilde{\xi},\underline{\xi})$, we still have $\chi_{\Omega_{\xi}}(x)=\chi_{\Omega_{\underline{\xi}}}(x)=1$, which again gives \eqref{eq31}.  

On the other hand, if $x\not\in \Omega_{\underline{\xi}}$, then $x\not\in B$ and $x_n\geq \underline{\xi}$.  
Since the set $\{x_n=\underline{\xi}\}$ has measure zero, we may assume without loss of generality that $x_n> \underline{\xi}$.  
Then there exists $\widetilde{\xi}\in (\underline{\xi},x_n)$ such that, for all $\xi \in (\underline{\xi},\widetilde{\xi})$, we have $x\not\in\Omega_\xi$, which completes the proof of~\eqref{eq31}.  

Hence, by \eqref{eq31} and the Dominated Convergence Theorem, \eqref{eq30} follows.

Note also that if $\xi=0$, then $\Omega_\xi=B$, and therefore, by \eqref{eq28} and \eqref{eq29},
$$ |\Omega_\xi|=|B|\leq \frac{k_0}{2}|\Omega|\le
\frac{\underline{m}+m_0}{2\big(\underline{m}+\overline{m}\big)}
|\Omega|.
$$
Therefore, in view of the continuity result in~\eqref{eq30},
it follows that $\xi^*>0$.

Moreover, by \eqref{eq30}, we obtain
\begin{equation*}
|\Omega_{\xi^*}|=\frac{\underline{m}+m_0}{
\underline{m}+\overline{m}}\,|\Omega|.
\end{equation*}
As a consequence,
setting $D:=\Omega_{\xi^*}$, we see that $D$
satisfies \eqref{eq22}.
  
Now choose $v\in C^\infty_0(B)$ with $v\not\equiv 0$. Since $B\subset D$,
\[
\underline{\lambda}\leq 
\frac{\displaystyle \frac12\iint_{ \cQ \cap\{ |x - y| \leq 1\}} \frac{ (v(x)-v(y))^2}{|x-y|^N}\,dx\,dy}{\displaystyle
\int_\Omega\big(\overline{m}\chi_D-\underline{m}\chi_{\Omega\setminus D}\big)v^2\,dx}
=\frac{\displaystyle\frac12 \iint_{ \cQ \cap\{ |x - y| \leq 1\}} \frac{ (v(x)-v(y))^2}{|x-y|^N}\,dx\,dy}{\overline{m}\displaystyle\int_B v^2\,dx}
\le\frac{C}{\overline{m}},
\]
for some positive constant $C$ depending on $\Omega$ and $d_0$.
This completes the proof of Theorem~\ref{prop6}.
\end{proof}

\begin{proof}[Proof of Theorem \ref{th4}]
The existence of a global minimum for $\cE$ follows from Theorem~\ref{th1}. Moreover, in the first case this minimizer is non-trivial, as a consequence of Lemma~\ref{lemma5} and Proposition~\ref{prop6}. In the latter case, the minimum is trivial, again by Lemma~\ref{lemma5} and Proposition~\ref{prop7}.
\end{proof}

\section*{Acknowledgments}
This work has been supported by the
Australian Research Council Future Fellowship FT230100\-333
``New perspectives on nonlocal equations'' and
by the Australian Laureate Fellowship FL190100081 ``Minimal
surfaces, free boundaries and partial differential equations''.

The first author is supported by the EPSRC grant EP/W026597/1. He would also like to thank the Istituto Nazionale di Alta Matematica ``Francesco Severi'' (INdAM) for supporting a research visit to the University of Western Australia, where this project started, and the Mathematisches Forschungsinstitut Oberwolfach (MFO) for its hospitality during the Leibniz Fellowship program (OWLF), where part of this work was carried out.

\bibliographystyle{amsplain}
\bibliography{loglogistic}
\end{document}